\documentclass[12pt,reqno]{amsart}
\usepackage{amssymb}
\usepackage{amsmath}

\usepackage{amscd}

\newcommand{\RNum}[1]{\uppercase\expandafter{\romannumeral #1\relax}}

\usepackage[T2A]{fontenc}
\usepackage[utf8]{inputenc}
\usepackage[russian,english]{babel}
\input{int.def}

\usepackage{tikz-cd}
\usetikzlibrary{cd}
\usepackage{dirtytalk}
\usepackage{hyperref}
\usepackage{xcolor}
\usepackage{centernot}

\usepackage{amsmath}

\usepackage{enumitem}

\numberwithin{equation}{section}

\parindent = 0pt

\usepackage[mathcal]{euscript}

\usepackage{titlesec}
\titleformat{\section}[runin]{\bfseries}{\thesection.}{3pt}{}[.]

\myitemmargin 
\baselineskip =15.0pt plus 2.5pt
\usepackage{geometry}
\newgeometry{vmargin={25mm}, hmargin={22mm,22mm}, footskip=10mm}   

\begin{document}

\title[Anti-self-dual blowups II]%
{Anti-self-dual blowups II}

\author{Vsevolod Shevchishin}
\address{Faculty of Mathematics and Computer Science\\
University of Warmia and Mazury\\ \linebreak
ul.~Słone\-czna 54, 10-710 Olsztyn, Poland
}
\email{vsevolod@matman.uwm.edu.pl, shevchishin@gmail.com}

\author{Gleb Smirnov}
\address{
Mathematical Sciences Institute, 
Australian National University, Canberra, Australia}
\email{gleb.smirnov@anu.edu.au}




\begin{abstract}
Let \(X\) be a closed, oriented four–manifold with \(b_{2}^{+} \le 3\), and suppose \(X\) contains a collection of pairwise disjoint embedded \((-2)\)–spheres. We prove that there is a Riemannian metric on \(X\) such that the Poincare dual of each of these spheres is represented by an anti–self–dual harmonic \(2\)–form. This extends our earlier result for \((-1)\)–spheres. The main new ingredient is an application of Eliashberg's \(h\)–principle for overtwisted contact structures, which we use to construct self–dual harmonic forms on four–orbifolds with prescribed local behaviour near the orbifold singular set.
\end{abstract}

\maketitle

\setcounter{section}{0}

\section{Main result}\label{main} 
Let \((X,g)\) be a closed, oriented Riemannian \(4\)-manifold, and let  
\(\Lambda^2 = \Lambda^2 T^{*}_{X}\) denote the bundle of \(2\)-forms. The Hodge operator \(\star \colon \Lambda^2 \to \Lambda^2\) has eigenvalues \(\pm 1\) and induces 
the splitting:
\[
\Lambda^2 = \Lambda^+ \oplus \Lambda^-,
\]
where \(\Lambda^\pm\) are the 
\(\pm 1\)-eigenbundles. A \(2\)-form \(\varphi\) is self-dual (SD) if \(\star \varphi = \varphi\) and anti-self-dual (ASD) if \(\star \varphi = -\varphi\). Let
\[
\mathcal{H}^2_g = \{\varphi \in \Gamma(\Lambda^2) : \Delta \varphi = 0\}
\]
be the space of harmonic \(2\)-forms. 
By Hodge theory, every harmonic \(2\)-form is closed, and the natural map 
\(\mathcal{H}^2_g \to H^2(X;\mathbb{R})\) is an 
isomorphism. 
Since \(\Delta\) commutes with \(\star\), we obtain the splitting:
\[
\mathcal{H}^2_g = \mathcal{H}_{+,g} \oplus \mathcal{H}_{-,g},
\]
where \(\mathcal{H}_{+,g}\) and \(\mathcal{H}_{-,g}\) are the spaces of harmonic SD and ASD forms. \(\mathcal{H}_{+,g},\mathcal{H}_{-,g} \subset H^2(X;\mathbb{R})\) are positive- and negative-definite with respect to the cup product, and they are orthogonal. 
Their dimensions \( b_2^\pm = \dim \mathcal{H}_{\pm,g} \) are independent of \(g\); they are the maximal dimensions of positive- and negative-definite subspaces of \(H^2(X;\mathbb{R})\), hence topological invariants of \(X\).
\smallskip%

For each metric \(g\), consider the positive-definite subspace:
\[
P(g) = \mathcal{H}_{+,g} \subset H^2(X;\mathbb{R})\quad 
\text{of dimension \(b_2^+\)}.
\]  
\(g \to P(g)\) is the period map of \(X\), introduced in \cite{LeBr-1}. \(P(g)\) depends only on the conformal class of \(g\), and the map \(P\) is open in the \(C^\infty\)-topology on metrics. 
It remains an open problem whether a given positive-definite subspace \(\pi \subset H^2(X;\mathbb{R})\) arises as \(P(g)\) for some metric \(g\).
\smallskip%

Let us now state our main result:

\begin{theorem}\label{t:main}
Let \(X\) be a closed, 
oriented \(4\)-manifold with \(b^{+}_{2} \leq 3\). 
Suppose \(X\) contains a collection of pairwise disjoint embedded spheres \(S_i\), each with self-intersection \((-2)\).  
Then there exists a Riemannian metric \(g\) on \(X\) such that every \(g\)-self-dual harmonic form \(u\) satisfies:
\[
\int_{S_i} u = 0 \quad \text{for all } S_i.
\]
Equivalently, if \(s_i \in H^2(X;\mathbb{Z})\) denotes the Poincar\'e dual of \(S_i\), then each \(s_i\) is represented by an~anti-self-dual form.
\end{theorem}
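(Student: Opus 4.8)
The plan is to follow the strategy developed for $(-1)$-spheres (part I), replacing the blowup there with the orbifold resolution appropriate to $(-2)$-spheres. Let $S_1,\dots,S_k$ be the given disjoint embedded $(-2)$-spheres. Collapsing each $S_i$ to a point produces a closed, oriented four-orbifold $X_0$ with $k$ isolated singular points, each modelled on $\mathbb{C}^2/(\pm 1)$, i.e. a cone on $\mathbb{R}\mathbb{P}^3$; equivalently, $X$ is the minimal resolution of $X_0$, with exceptional set $\bigcup S_i$. On the level of $L^2$-cohomology one has $b_2^+(X_0)=b_2^+(X)$ when $b_2^+(X)\le 3$ — indeed the classes $s_i$ span a negative-definite sublattice orthogonal to every self-dual harmonic form we seek — so the period point of $X$ that we want to hit is exactly the one induced from $X_0$. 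The first step is therefore to construct, on the orbifold $X_0$, a Riemannian orbifold metric $g_0$ whose self-dual harmonic forms pull back (under resolution) to forms on $X$ annihilating all $s_i$; concretely, this means prescribing the local model of $g_0$ near each singular point so that the self-dual harmonic forms extend across the resolution with zero periods on the $S_i$.

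The key new ingredient, as advertised in the abstract, is the use of Eliashberg's $h$-principle for overtwisted contact structures to control the local behaviour of self-dual harmonic forms near the orbifold points. Near a cone point $c(\mathbb{R}\mathbb{P}^3)$, a self-dual harmonic $2$-form $\omega$ that is, say, symplectic away from the apex restricts on small spheres to a contact-type structure on the lens space $\mathbb{R}\mathbb{P}^3$; to glue the orbifold construction to a smooth metric on the resolved $X$ one needs the germ of this structure to be overtwisted, so that Eliashberg's theorem lets us realize it by an honest contact structure bounding the resolution handle (the disk bundle of Euler number $-2$ over $S^2$). So the second step is: (i) build a self-dual harmonic form on the complement of the singular set with the desired cohomology class and with overtwisted contact boundary behaviour near each puncture, using the openness of the period map to adjust the conformal class; (ii) invoke the $h$-principle to fill in each neighbourhood; and (iii) patch the local fillings with the global form to obtain $g$ on $X$ with the stated vanishing of all periods $\int_{S_i}u$.

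I expect the main obstacle to be step (ii)–(iii): ensuring that the local contact filling near each $(-2)$-point is genuinely compatible — both topologically (the filling must be the $-2$ disk bundle, not an arbitrary symplectic filling) and analytically (the glued $2$-form must remain self-dual and harmonic for the patched metric, not merely closed and self-dual for some metric) — and that the gluing does not disturb the cohomology class or the period conditions. The constraint $b_2^+\le 3$ enters precisely here: it guarantees that the relevant moduli/transversality problem for self-dual harmonic forms (which for $b_2^+$ large would force zeros of $u$ on a codimension-$b_2^+$ locus, obstructing the construction) is unobstructed, so that the period map is a submersion onto a neighbourhood of the target period point and the vanishing conditions $\langle u, s_i\rangle = 0$ can be imposed simultaneously. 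A secondary technical point is checking that the self-intersection $-2$ (rather than $-1$) is exactly what makes the cone $\mathbb{C}^2/\pm1$, hence the lens space $\mathbb{R}\mathbb{P}^3$, appear — this is where the argument genuinely differs from part I and where the orbifold-singularity analysis must be redone from scratch.
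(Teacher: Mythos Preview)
Your high-level shape is right --- collapse the $(-2)$-spheres to an orbifold $X_0$ with $\mathbb{R}^4/\{\pm 1\}$ singularities, work on $X_0$, transfer to the resolution --- but you have misidentified where the $h$-principle enters, and this hides the actual crux. The key technical step, absent from your proposal, is to produce on $X_0$ a closed self-dual $2$-form $\psi$ that \emph{vanishes at each orbifold point} $p_i$ (Proposition~\ref{cone_zero}). The reason this matters: after resolving $p_i$ and inserting a neck of length $T$, one has a formula $\int_{S_i} u = A\,\langle \omega_i,\psi\rangle_{p_i}\,e^{-2T} + O(e^{-cT})$ for the period of the glued self-dual harmonic form (Lemma~\ref{blowup-lemma}, via the neck-stretching of \S\ref{neck}). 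If $\psi(p_i)=0$ the leading term drops out, and a finite-dimensional transversality argument (Lemma~\ref{brun}) then kills the $O(e^{-cT})$ error by perturbing the metric. This vanishing condition is exactly what separates the $(-2)$-case from the $(-1)$-case: for $(-1)$-spheres the zero of $\psi$ can be placed anywhere, but here it must land at the orbifold point, which is far from automatic.

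Eliashberg's theorem is used to prove Proposition~\ref{cone_zero}, not to fill $\mathbb{RP}^3$ by the $(-2)$-disk bundle as you suggest. The resolution side is handled by ordinary K\"ahler geometry (blowup, plurisubharmonic potentials). What the $h$-principle actually buys is the ability to modify the standard symplectic form on a ball $B/\{\pm 1\}$ into a \emph{near-symplectic} form that vanishes at the cone point while remaining standard near $\partial B$; this goes via a classification of near-contact forms on $3$-manifolds (Proposition~\ref{contact_classification}) and a near-symplectic cobordism (Lemma~\ref{cobordism}), after which the Auroux--Donaldson--Katzarkov theorem supplies a metric making the form self-dual. Your step (ii) --- ``overtwisted boundary so the resolution handle can be filled'' --- is not the mechanism, and would not by itself force $\int_{S_i} u$ to vanish. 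Your reading of $b_2^+\le 3$ is closer to the mark: at each $p_i$ the space of self-dual directions is $3$-dimensional, so rotating the local complex structure together with the metric perturbation of Lemma~\ref{brun} can control at most three conditions $\langle\omega_i,\psi_j\rangle_{p_i}=0$.
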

\smallskip%

Scaduto \cite{Scad} recently proved that the period map \(P\) has dense image for every \(4\)-manifold, and is 
surjective when \(b_2^+=1\). This does not imply Theorem~\ref{t:main}. Indeed, the conditions \(s_i \perp P(g)\) define a closed, positive-codimension subset of 
the Grassmannian of positive \(b_2^+\)-planes in \(H^2(X;\mathbb{R})\), and a dense open subset may avoid such a set.
\smallskip%

\begin{rem*}
Consider the following example. Let \(X\) be the underlying manifold of a smooth complex K3 surface. The cup product on \(H^2(X;\mathbb{R})\) has signature \((3,19)\). Let \(Gr^{+}_{X}\) be the Grassmannian of \(3\)-dimensional positive-definite planes in \(H^2(X;\mathbb{R})\).  For any class \(\delta \in H^2(X;\mathbb{Z})\) with \(\delta^2 = -2\), 
define:
\[
\delta^{\perp} = \{\pi \in Gr^{+}_{X} : \pi \perp \delta\}.
\]
Set:
\[
D = Gr^{+}_{X} - \bigcup_{\delta^2 = -2} \delta^{\perp}.
\]
The global Torelli theorem for K3 surfaces states that \(D\) is precisely the points in \(Gr^{+}_{X}\) realized as the period of a K{\"a}hler metric on \(X\); see \cite{Siu-2, Tod}. Each \(\delta^{\perp}\) has codimension \(2\) in \(Gr^{+}_{X}\), so \(D\) is open and dense. Although no point of $\delta^{\perp}$ can be the period of a K{\"a}hler metric, it may still occur as the period of a more general Riemannian metric. This indeed occurs: Kummer surfaces, which are K3, contain \(16\) disjoint embedded rational curves. Thus, Theorem~\ref{t:main} produces metrics for which up to \(16\) pairwise disjoint integral \((-2)\)-classes are simultaneously represented by ASD forms.
\end{rem*}
\smallskip%

In our previous work \cite{S-S-1}, we proved 
the analogue of Theorem~\ref{t:main} for collections of pairwise disjoint embedded \((-1)\)-spheres. 
Both proofs follow the same overall strategy, 
but the \((-2)\)-case presents an additional difficulty, which we now describe.
\smallskip%

The first step is to construct a Riemannian orbifold \((M,g)\) together with a closed SD form that vanishes at a prescribed point \(p \in M\). Blowing up \(M\) at \(p\) then produces a manifold \(X\) with an embedded exceptional sphere. In the \((-1)\)-case, the point \(p\) may be chosen arbitrarily, so producing a metric \(g\) with a closed SD form vanishing somewhere is relatively straightforward. In contrast, in the \((-2)\)-case, the point \(p\) must be the orbifold 
point of \(M\); otherwise the exceptional sphere created 
in the resolution has self-intersection \((-1)\) 
rather than \((-2)\). Thus, the metric \(g\) on \(M\) has to be chosen much more carefully.
\smallskip%

The new geometric input 
is the following existence theorem for closed SD forms with prescribed zeros. Its proof occupies \S\,\ref{near_contact_section}, \S\,\ref{near-symplectic-section}, and \S\,\ref{assemble-section}.
\smallskip%

\begin{proposition}\label{cone_zero}
Let \(M\) be a compact, oriented \(4\)-orbifold 
with isolated singular points \(p_1,\dots,p_n\), each of type \(\mathbb{R}^4/\{\pm1\}\), and assume \(b_2^+(M)\ge 1\). Then there exist a Riemannian metric \(g\) on \(M\), flat in neighborhoods of all \(p_i\), and a closed, \(g\)-self-dual orbifold form \(\psi\) such that \(\psi(p_i)=0\) for all \(i\).
\smallskip

For each \(p_i\), fix an orbifold chart:
\[
\pi_i : (B^4, g_{\mathrm{euc}}) \longrightarrow (U_i,g) \cong B^4/\{\pm 1\} \ni p_i,
\]
which is an isometry. An orbifold form 
means that on \(U_i - \{p_i\}\), the 
pullback \(\pi_i^*\psi\) is antipodal-invariant and extends smoothly over \(0 \in B^4\); \(\psi(p_i)=0\) means that \(\pi_i^*\psi\) vanishes at \(0 \in B^4\).
\end{proposition}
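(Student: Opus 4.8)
\proof
We only sketch the argument here; it is carried out in \S\,\ref{near_contact_section}, \S\,\ref{near-symplectic-section} and \S\,\ref{assemble-section}. The idea is to recast the statement in terms of near-symplectic forms, to build a local model near each orbifold point out of the contact geometry of its link $\rp^3$, and to extend it globally by a cut-and-paste argument whose gluing step is supplied by Eliashberg's flexibility for overtwisted contact structures. For the reduction, recall that a non-zero self-dual $2$-form $\varphi$ is symplectic wherever it does not vanish, since $\varphi\wedge\varphi=\varphi\wedge\star\varphi=|\varphi|^{2}\,\mathrm{vol}>0$, and that a closed self-dual form is automatically coclosed, $d^{*}\varphi=-\star d\star\varphi=-\star d\varphi=0$, hence harmonic. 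Conversely, suppose $\psi$ is a closed orbifold $2$-form on $M$ that is non-degenerate away from a closed set $Z$ consisting of a disjoint union of circles in the regular locus together with the points $p_i$, which is in Honda's standard normal form near the circular part of $Z$ and which, near each $p_i$, coincides with a fixed antipodal-invariant closed $2$-form $\psi_0$ on $B^{4}$ that is self-dual and harmonic for $g_{\mathrm{euc}}$ and vanishes at $0$. Then a Riemannian metric $g$ can be pieced together from the $\psi$-compatible metrics off $Z$, the standard model near the zero circles, and $g_{\mathrm{euc}}$ near each $p_i$; for this $g$ the form $\psi$ is self-dual, flat near each $p_i$, and $\psi(p_i)=0$. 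So it suffices to construct such a $\psi$.

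\emph{The conical local model ($\S\,\ref{near_contact_section}$).} On $(B^{4},g_{\mathrm{euc}})$ one writes down explicitly an antipodal-invariant, closed, self-dual $2$-form $\psi_0$ vanishing at the origin; harmonicity is automatic because on a flat manifold the Hodge Laplacian on $2$-forms acts component-wise on the coefficients. Away from $0$ the form $\psi_0$ is symplectic, so every concentric sphere --- and hence its image $\rp^3\subset B^{4}/\{\pm1\}$ --- lies inside a symplectic region and inherits a cooriented contact structure $\xi$. The key point of this step is to choose $\psi_0$ so that $\xi$ is \emph{overtwisted}, and to identify the homotopy class of plane fields underlying it.

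\emph{The global extension ($\S\,\ref{near-symplectic-section}$, $\S\,\ref{assemble-section}$).} Excising small orbifold balls around the $p_i$ leaves a compact \emph{smooth} $4$-manifold $M'$ with $\partial M'=\bigsqcup_i\rp^3$, each boundary component carrying the contact structure $\xi$ with its coorientation. Since $b_2^{+}(M)\ge1$, the positive cone in $H^{2}(M;\rr)$ is non-empty, so there is a de Rham class of positive square --- the cohomological room that a global near-symplectic form requires. Combining existence results for near-symplectic forms (in the spirit of Honda, Gompf, and Auroux--Donaldson--Katzarkov) relative to a contact boundary with Eliashberg's classification of overtwisted contact structures on $3$-manifolds --- existence and uniqueness up to isotopy within each homotopy class of plane fields, hence flexibility of the symplectic cobordisms and caps with overtwisted ends that the gluing needs --- one produces on $M'$ a near-symplectic form restricting to $\psi_0$ in a collar of $\partial M'$ and lying in Honda normal form near its interior zero circles. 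Gluing the local models back over the excised balls gives $\psi$ on $M$, and the reduction above turns it into the metric $g$ and the self-dual orbifold form $\psi$ with $\psi(p_i)=0$.

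\emph{Where the difficulty lies.} The crux is the interface between $\S\,\ref{near_contact_section}$ and $\S\,\ref{near-symplectic-section}$: one must arrange that the link contact structures $\xi$ produced by the conical model are overtwisted \emph{and} lie in a homotopy class of plane fields which, over $M'$, is filled by a genuinely near-symplectic form realizing the prescribed cohomology class, with the filling in Honda normal form near the new zero circles so that a smooth compatible metric actually extends across them. Once overtwistedness is established, Eliashberg's $h$-principle dissolves the contact-geometric obstruction and the rest of the construction is soft; proving that the conical model on $\rp^3$ is overtwisted is the genuinely hard part.
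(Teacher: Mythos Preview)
Your sketch assembles the right ingredients (near-symplectic forms, the Auroux--Donaldson--Katzarkov theorem, Eliashberg's $h$-principle) but the architecture is inverted relative to the paper, and this inversion creates two genuine gaps.

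First, the paper does \emph{not} excise neighbourhoods of the $p_i$ and then attempt to fill the smooth manifold $M'$ with a near-symplectic form relative to a prescribed contact boundary. Instead it starts from an already-existing closed SD form $\psi$ on $(M,g)$ (guaranteed by $b_2^+\ge1$) and modifies it only inside a Darboux ball around each $p_i$, where $\psi$ is standard symplectic. All the work happens locally: Proposition~\ref{cone_zero} is reduced to constructing, on the standard ball $(B,\omega_{\mathrm{st}})$, an antipodal-invariant near-symplectic form that equals $\omega_{\mathrm{st}}$ near $\partial B$ and is flat-SD and vanishing at the centre. Your global filling step (``existence results for near-symplectic forms relative to a contact boundary in a prescribed cohomology class'') is not a result in the literature you can cite as a black box, and the paper never needs it.

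Second, your local model is mis-specified. You assert that $\psi_0$ is symplectic away from $0$, so that concentric $\rp^3$'s inherit honest contact structures, and then identify ``proving this contact structure is overtwisted'' as the hard step. In the paper the explicit flat-SD model $\omega$ (equation~\eqref{omega_f}) has a one-dimensional zero locus --- two lines through the origin --- so small spheres meet the zero set and inherit \emph{near-contact} forms with zeros, not contact structures. Overtwistedness in the paper enters through Lemma~\ref{overtwisted}: any near-contact form is overtwisted in every punctured neighbourhood of a zero. This is what feeds into Eliashberg's theorem (via Proposition~\ref{contact_classification}) and powers the near-symplectic surgery on a convex ball whose boundary is hit by arcs of the zero locus. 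The ``hard part'' is thus not overtwistedness of a link contact structure on $\rp^3$, but rather building the surgery that swaps one near-symplectic filling of a ball for another with the same number of zero-arcs --- this is Proposition~\ref{near-ball} together with Lemma~\ref{cobordism}.
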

\smallskip%

The proof of 
Proposition~\ref{cone_zero} proceeds as follows. 
To begin with, we reduce the problem to constructing a near-symplectic form on \(M\) with prescribed behavior near the points \(p_i\). This is done via a theorem of Auroux-Donaldson-Katzarkov (Proposition 1 in \cite{A-D-K}). Then, following Taubes \cite{Taub-5}, we construct such a near-symplectic form via Eliashberg’s classification of overtwisted contact structures in dimension \(3\) \cite{Eliash}.
\smallskip%

With Proposition~\ref{cone_zero} in hand, 
our argument proceeds as follows. 
In \S\,\ref{neck}, we introduce an iterative neck-stretching scheme for closed SD forms on manifolds with a long cylindrical neck. In \S\,\ref{blowup}, we apply 
this scheme to the blowup of \(M\) at \(p\). 
This shows that, after inserting a sufficiently long neck, one can find a metric \(g\) for which \(P(g)\) is 
arbitrarily close to being orthogonal to a prescribed collection of \((-2)\)-classes. In \S\,\ref{section-main-proof}, we then use Proposition~\ref{cone_zero} to deform such almost-orthogonal metrics so that those \((-2)\)-classes become genuinely ASD, thereby proving Theorem~\ref{t:main}. Although these arguments closely parallel our earlier work \cite{S-S-1}, we reprove certain steps here for completeness.
\smallskip%

Combining Theorem~\ref{t:main} 
with the main result of \cite{S-S-1}, one obtains the following mixed statement.
\begin{theorem*}
Let \(X\) be a closed, 
oriented \(4\)-manifold with \(b^{+}_{2} \leq 3\). 
Suppose \(X\) contains a collection of pairwise disjoint embedded spheres \(S_i\), each with 
self-intersection \((-1)\) or \((-2)\).  
Then there exists a Riemannian metric \(g\) on \(X\) such that every \(g\)-self-dual harmonic form \(u\) satisfies:
\[
\int_{S_i} u = 0 \quad \text{for all } S_i.
\]
\end{theorem*}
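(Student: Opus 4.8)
The plan is to run the same orbifold–reduction and neck–stretching argument that proves Theorem~\ref{t:main}, but now with a \emph{mixed} collection of marked points, so that both families of spheres are handled by a single instance of the machinery. First, one may assume \(b_2^+\ge 1\), since otherwise \(\mathcal H_{+,g}=0\) for every metric and the conclusion is vacuous. Partition the given spheres into the \((-1)\)-spheres \(E_1,\dots,E_k\) and the \((-2)\)-spheres \(S_1,\dots,S_m\). Since all of these are pairwise disjoint, I would contract them all simultaneously: blowing down each \(E_j\) replaces a tubular neighbourhood by a smooth point \(q_j\), while collapsing the zero section of the \((-2)\)-disc bundle around each \(S_i\) replaces it by an isolated orbifold point \(p_i\) of type \(\rr^4/\{\pm1\}\). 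This produces a compact oriented \(4\)-orbifold \(M\) carrying \(m\) orbifold points \(p_1,\dots,p_m\) and \(k\) marked smooth points \(q_1,\dots,q_k\), all distinct, from which \(X\) is recovered by blowing up at every \(q_j\) and taking the minimal resolution at every \(p_i\). Because blowing down a \((-1)\)-sphere and collapsing a \((-2)\)-sphere each kill a negative class, \(b_2^+(M)=b_2^+(X)\), so \(1\le b_2^+(M)\le 3\); writing \(e_j,s_i\in H^2(X;\zz)\) for the \Poincare duals of \(E_j,S_i\), the pullback \(H^2(M;\rr)\hookrightarrow H^2(X;\rr)\) is the orthogonal complement of the negative-definite subspace spanned by the \(e_j\) and \(s_i\).

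The one ingredient needed beyond Proposition~\ref{cone_zero} and the main result of \cite{S-S-1} is a common refinement of the two: a Riemannian metric \(g_M\) on \(M\), flat near every \(q_j\) and every \(p_i\), together with a closed \(g_M\)-self-dual orbifold form \(\psi\) vanishing at all the \(q_j\) and all the \(p_i\). When there are no smooth marked points this is exactly Proposition~\ref{cone_zero}; when there are no orbifold points it is the statement used in \cite{S-S-1}. For the general case I would follow the same route used to prove Proposition~\ref{cone_zero}: reduce, via the Auroux-Donaldson-Katzarkov criterion (Proposition~1 of \cite{A-D-K}), to producing a near-symplectic form on \(M\) with the prescribed \(\rr^4/\{\pm1\}\) model near each \(p_i\); build such a form following Taubes \cite{Taub-5} from Eliashberg's classification of overtwisted contact structures \cite{Eliash}; and then isotope the \(1\)-dimensional zero locus of that near-symplectic form so that, besides carrying the required cone structure at the \(p_i\), it additionally passes through each \(q_j\) (a link in a \(4\)-orbifold can always be dragged through a prescribed finite set of regular points, and this is a modification localized away from the \(p_i\)). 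Collapsing the near-symplectic form back to a closed self-dual one, as in \S\ref{near-symplectic-section} and \S\ref{assemble-section}, then yields \((g_M,\psi)\) with the required zeros.

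With \((M,g_M,\psi)\) in hand I would reconstruct \(X\) as in \S\ref{neck} and \S\ref{blowup}: glue in, through a long cylindrical neck, the ASD Burns-type cap on \(\calo(-1)\to\cp^1\) used in \cite{S-S-1} at each smooth point \(q_j\), and the ASD Eguchi-Hanson cap on the minimal resolution of \(\rr^4/\{\pm1\}\) at each orbifold point \(p_i\). Running the iterative neck-stretching scheme of \S\ref{neck}, the \(g\)-self-dual harmonic forms concentrate on \(M\) as the neck lengths grow, so \(P(g)\) converges in \(Gr^+_X\) to the image of \(P(g_M)\subset H^2(M;\rr)\hookrightarrow H^2(X;\rr)\), which by the previous paragraph is orthogonal to every \(e_j\) and every \(s_i\). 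Hence, once the necks are long enough, \(P(g)\) lies arbitrarily close to the constraint set
\[
\Pi=\{\pi\in Gr^+_X:\ \pi\perp e_j\ \text{and}\ \pi\perp s_i\ \text{for all }i,j\}.
\]
The final step, exactly as in \S\ref{section-main-proof}, is to use the refined existence statement of the second paragraph together with the openness of the period map to deform such an almost-orthogonal metric until \(P(g)\) lands on \(\Pi\): the relevant metric perturbations are supported near an individual neck and move \(P(g)\) in the normal direction to \(\Pi\) cut out by the class carried by that neck, so — the \(e_j,s_i\) being mutually orthogonal and the necks disjoint — the corrections decouple and can be applied simultaneously. For the resulting \(g\), every \(g\)-self-dual harmonic form \(u\) satisfies \(\int_{E_j}u=\int_{S_i}u=0\) for all \(j\) and \(i\).

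I expect the main obstacle to be the combined existence statement of the second paragraph — arranging that the zero locus of the near-symplectic form can be routed through all the smooth marked points \(q_j\) without disturbing the prescribed cone models at the orbifold points \(p_i\), and that its collapse to a closed self-dual form still vanishes at each \(q_j\) — and, secondarily, checking that the deformation argument of \S\ref{section-main-proof} continues to produce a metric with \(P(g)\in\Pi\) when the entire collection of classes \(e_j,s_i\) is imposed at once rather than one class at a time.
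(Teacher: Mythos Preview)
Your proposal is correct and follows the route the paper indicates (and then explicitly omits): contract all spheres at once to an orbifold \(M\), produce on \(M\) a closed self-dual form vanishing at every marked point, and run the neck-stretching and deformation arguments of \S\ref{neck}--\S\ref{section-main-proof} for all necks simultaneously. One simplification worth noting: since blowing up a \(4\)-orbifold at any two smooth points yields diffeomorphic results, you need not route the near-symplectic zero locus through \emph{prescribed} points \(q_j\); it suffices to take the \(q_j\) to be any \(k\) distinct smooth points already lying on the one-dimensional zero set produced by Proposition~\ref{cone_zero}, so the ``combined existence statement'' you flag as the main obstacle reduces to that proposition together with a trivial general-position step.
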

\smallskip%

This statement does not require any new ideas beyond those used in the pure \((-1)\)– and \((-2)\)–cases. Our neck–stretching scheme treats all spheres simultaneously. We
therefore omit a separate proof.
\smallskip%

\begin{rem*}
A closed ASD \(2\)-form whose cohomology class is integral is the curvature of a line bundle with an ASD connection. Thus, our neck-stretching construction is formally similar to Taubes’ existence proofs for ASD connections 
\cite{Taub-6, Taub-3, Taub-4} on higher-dimensional 
bundles. However, unlike Taubes’ positive-index problems, the condition that a given set of \((-2)\)-classes be represented by ASD forms is not open in the \(C^\infty\)-topology on metrics; the index of our problem is negative.
\end{rem*}

\section{A neck-stretching argument}\label{neck} 
The gluing argument in this section is inspired by the existing literature (see, e.g., \cite{Atiyah-Pat-Sing-1, Nic-2, Taub-6}), but we present it in a form tailored to our later applications.
\smallskip%

Let $X_1$ and $X_2$ be compact, oriented $4$–manifolds with common boundary
\[
\partial X_1 = \partial X_2 = Y.
\]
Fix $\delta > 0$ small. On each $X_i$ choose a Riemannian metric $g_i$ which, on a collar neighborhood
\[
(-\delta,1] \times Y \subset X_i,
\]
splits as a product:
\begin{equation}\label{product-metric}
g_i = dt\otimes dt + g_Y.
\end{equation}
Throughout let us take \(Y = \mathbb{RP}^3\) equipped with the metric $g_Y$ induced from the unit round metric 
\(g_{S^3}\) on \(S^3\) via the double cover \(S^3 \to \mathbb{RP}^3\). All results remain valid if \(H^{1}(Y;\mathbb{R})=0\).
\smallskip%

Let
\[
P_i = [0,1]\times Y \subset X_i
\]
denote the smaller collar of the boundary.
\smallskip%

Let us form the non-compact manifold 
\(X_i^\infty\) by attaching the semi-infinite cylinder \([0,\infty)\times Y\) along \(P_i\), and extending the product metric \eqref{product-metric} over the cylindrical end. The resulting metric is denoted \(g_i^\infty\).
\smallskip%

Fix $T>0$ and consider the finite cylinder:
\[
N = [0,T]\times Y,
\]
equipped with the product metric \eqref{product-metric}. Introduce the extended cylinder:
\[
N' = [-1,\, T+1]\times Y,
\]
and attach it to \(X_1\) and \(X_2\) as follows: the collar \(P_1\) is identified with the subcylinder $[-1,0]\times Y \subset N'$, and \(P_2\) with $[T,\, T+1]\times Y \subset N'$.  For each $T$, this yields a closed $4$–manifold:
\[
X_T = X_1 \cup N \cup X_2,
\]
carrying a metric $g_T$ which equals $g_i$ on $X_i$ and the product metric on $N=[0,T]\times Y$.
\smallskip%

Inside $N$, let us single out the unit cylinders:
\[
Q_1 = [0,1]\times Y, \quad
Q_2 = [T-1,\, T]\times Y.
\]
Let
\[
\mathcal{D} = d + d^{*}, \quad
\Delta = \mathcal{D}^{\,2}
\]
denote the Hodge operator and the Hodge Laplacian.
\smallskip%

Let \(L^{2}(X_i^\infty)\) denote the space of 
$L^{2}$–integrable differential forms on \(X_i^\infty\) with respect to the metric \(g_i\). 
Let \(\mathcal{H}^{+}(X_i^\infty)\) denote 
the space of SD harmonic forms on \(X_i^\infty\) that are 
also \(L_2\), i.e.,
\[
\mathcal{H}^{+}(X_i^\infty) = 
\left\{\text{\(g_{i}^{\infty}\)-self-dual}\right\} \cap 
\operatorname{ker} \Delta \cap 
L^{2}(X_i^\infty).
\]
For a domain \(A\), use \(\|\cdot\|_{k,A}\) to denote the $L^{2}_{k}$–Sobolev norm on \(A\).
\smallskip%

Introduce cut-off functions \(\rho_1, \rho_2\) such that:
\begin{enumerate}
\item \(\rho_2 = 0\) on \(X_1 - P_1\), \(\rho_2 = 1\) on \(X_T - X_1\), and \(\rho_2\) interpolates between \(0\) and \(1\) on \(P_1\).
\smallskip

\item \(\rho_1 = 0\) on \(X_2 - P_2\), \(\rho_1 = 1\) on \(X_T - X_2\), and \(\rho_1\) interpolates between \(0\) and \(1\) on \(P_2\).
\end{enumerate}
\smallskip%

Fix a form \(\psi \in \mathcal{H}^{+}(X_1^\infty)\). Construct two sequences of smooth SD forms,
\[
v^{(i)},\ u^{(i)},\quad i \ge 1,
\]
such that:
\[
v^{(i)} \in L^{2}(X_1^\infty)\ \text{for $i$ odd},\qquad
v^{(i)} \in L^{2}(X_2^\infty)\ \text{for $i$ even},\qquad
u^{(i)} \in L^{2}(X_T)\ \text{for all $i$},
\]
via the recursion:
\smallskip%

\begin{enumerate}[label=(\alph*)]
\item \(v^{(1)}=\psi\).
\smallskip%

\item If \(i\) is odd, restrict \(v^{(i)}\) to \(X_1\cup N\cup P_2\) and set:
\[
u^{(i)}=\rho_1\,v^{(i)}.
\]
If \(i\) is even, restrict \(v^{(i)}\) to \(X_2\cup N\cup P_1\) and set:
\[
u^{(i)}=\rho_2\,v^{(i)}.
\]
\item Suppose \(i\) is odd (the even case is symmetric). Construct an SD form \(v^{(i+1)} \in L^{2}(X_2^\infty)\) 
solving
\[
\mathcal{D} v^{(i+1)}=-\mathcal{D}u^{(i)}.
\]
The construction produces elliptic estimates of the form:
\begin{equation}\label{vestim}
\|v^{(i+1)}\|_{k,\,X_2\cup Q_2}
\;\le\;
A_k\,\|\Delta u^{(i)}\|_{k-2,\,P_2},
\end{equation}
with constants \(A_k\) independent of \(T\) and \(i\). 
For even \(i\), an identical estimate holds on \(X_1 \cup Q_1\).
\end{enumerate}
\smallskip%

Let us now explain step (c) in detail for odd \(i\). The even case is similar. 
\smallskip%

To begin with, assume \(v^{(i-1)}\) and \(u^{(i-1)}\) have already been constructed. Fix numbers \(0 < T_1 < T_2\) and choose a smooth, positive function
\(\chi \colon [0,\infty)\times Y \to \mathbb{R}\) such that:
\[
\chi(t,y)=
\begin{cases}
1, & t \in [0,\,T_1],\\[4pt]
e^{-2t}, & t \ge T_2.
\end{cases}
\]
Extend \(\chi\) to all of \(X_1^\infty\) by setting 
it equal to \(1\) on \(X_1\). Consider the conformal 
rescaling:
\[
\hat g_1 = \chi\, g_1^\infty.
\]
\((X_1^\infty,\hat g_1)\) has an asymptotically Euclidean end.  
Indeed, setting \(r=e^{-t}\) yields:
\[
e^{-2t}\bigl(dt\otimes dt + g_{S^3}\bigr)
= dr\otimes dr + r^2 g_{S^3}.
\]
Thus, the region \([T_2,\infty)\times Y\) endowed with \(\hat g_1\) is isometric to the punctured closed ball of radius \(e^{-T_2}\) modulo the antipodal map. Adding the point at infinity produces a compact Riemannian orbifold, smooth away from the added point, where the metric is locally isometric to \(\mathbb{R}^4/\{\pm 1\}\).
\smallskip%

Use \(\hat X_1\) to denote the orbifold compactification of \(X_1^{\infty}\). 
\smallskip%

Choose a neighborhood \(U \subset \hat X_1\) of the orbifold point together with an isometry:
\[
B \xrightarrow{\ \pi\ } B/\{\pm 1\}
   \xrightarrow{\ \tau \ } U,
\]
where \(\pi\) identifies antipodal points and \(\tau\) is the chosen isometry. 
A differential form \(\phi\) is smooth on \(U\) if \((\pi\circ\tau)^{*}\phi\) extends smoothly across 
the origin in \(B\). A form on \(\hat X_1\) 
is smooth if it is smooth away from the orbifold point and smooth on \(U\). On the domain of such smooth forms, the operator \(\mathcal{D}\) remains elliptic and self-adjoint, so standard Hodge theory applies.
\smallskip%

\(\mathcal{D}u^{(i-1)}\) is supported in \(P_1\cup P_2\). 
Let us restrict \(\mathcal{D}u^{(i-1)}\) to \(X_1 \cup N\) and extend it by zero to the remainder of \(\hat X_1\). Let us continue to denote this extension by \(\mathcal{D}u^{(i-1)}\).
\smallskip%

Let \(\mathcal{H}(\hat X_1)\) denote the space of harmonic
\(2\)-forms on \((\hat X_1,\hat g_1)\). For any \(\varphi\in\mathcal{H}(\hat X_1)\),
\[
\int_{\hat X_1}\langle \varphi, \Delta u^{(i-1)}\rangle
=
\int_{\hat X_1}\langle \mathcal{D}\varphi, \mathcal{D}u^{(i-1)}\rangle
=0,
\]
because \(\Delta\varphi=0\) implies \(\mathcal{D}\varphi=0\). Thus, \(\Delta u^{(i-1)}\) is \(L^{2}\)–orthogonal to  
\(\mathcal{H}(\hat X_1)\). Hodge theory on the compact orbifold \((\hat X_1,\hat g_1)\) then gives a unique 
SD form \(\hat v^{(i)}\) such that:
\[
\Delta \hat v^{(i)} = -\,\Delta u^{(i-1)},\qquad  
\hat v^{(i)}\perp\mathcal{H}(\hat X_1).
\]
This can be strengthened:
\begin{lemma}
\(
\mathcal{D}\hat v^{(i)} = -\,\mathcal{D}u^{(i-1)}
\) for \(\hat v^{(i)}\) defined above.
\end{lemma}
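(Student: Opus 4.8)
The plan is to prove that the form
$\alpha:=\mathcal{D}\hat v^{(i)}+\mathcal{D}u^{(i-1)}$
is identically zero, where $\mathcal{D}u^{(i-1)}$ denotes the $(1{+}3)$-form described just above (the restriction of $\mathcal{D}u^{(i-1)}$ to $X_1\cup N$, extended by zero over the orbifold point). Applying $\mathcal{D}$ to $\alpha$ and using the equation $\Delta\hat v^{(i)}=-\Delta u^{(i-1)}$ that defines $\hat v^{(i)}$ gives $\mathcal{D}\alpha=\Delta\hat v^{(i)}+\Delta u^{(i-1)}=0$; since $\mathcal{D}$ is elliptic and self-adjoint on the compact orbifold $(\hat X_1,\hat g_1)$, this means $\alpha$ is a harmonic form. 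As $\hat v^{(i)}$ and $u^{(i-1)}$ are $2$-forms, $\alpha$ has pure degree $1\oplus3$. It therefore suffices to show that $\alpha$ is $L^2$-orthogonal to $\ker\mathcal{D}$, for then $\|\alpha\|^2=0$.

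For the summand $\mathcal{D}\hat v^{(i)}$ this is immediate: if $\mathcal{D}\varphi=0$ then $\langle \mathcal{D}\hat v^{(i)},\varphi\rangle=\langle \hat v^{(i)},\mathcal{D}\varphi\rangle=0$ by self-adjointness of $\mathcal{D}$ on the closed orbifold. (This uses only the equation for $\hat v^{(i)}$, not the normalization $\hat v^{(i)}\perp\mathcal{H}(\hat X_1)$, so the lemma actually holds for every SD solution of $\Delta\hat v^{(i)}=-\Delta u^{(i-1)}$.) The heart of the matter is the orthogonality $\mathcal{D}u^{(i-1)}\perp\ker\mathcal{D}$, which genuinely needs an argument, because $u^{(i-1)}$ is not a globally defined form on $\hat X_1$ and one cannot simply integrate by parts against a harmonic form. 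The key observation is that every $\mathcal{D}v^{(j)}$ and $\mathcal{D}u^{(j)}$ is supported in the interiors of the product collars $P_1$ and $P_2$: each term produced by the recursion has the shape $\mathcal{D}$ of (a cut-off function) times (a form), hence is supported where the relevant $d\rho$ is nonzero, and $d\rho_1,d\rho_2$ vanish near $\partial P_2,\partial P_1$. After restricting to $X_1\cup N$ and extending by zero, $\mathcal{D}u^{(i-1)}$ is thus a form compactly supported in the interior of the collar $P_1\cong[0,1]\times Y$.

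Write $\mathcal{D}u^{(i-1)}=\zeta_1+\zeta_3$ for the components of degree $1$ and $3$. Wherever $u^{(i-1)}$ is defined these equal $d^{*}u^{(i-1)}$ and $d u^{(i-1)}$, so $\zeta_3$ is closed and $\zeta_1$ is coclosed on all of $\hat X_1$, and each is compactly supported in the interior of $P_1$. Now use the standing hypothesis $H^{1}(Y;\mathbb{R})=0$: since $Y$ is a closed oriented $3$-manifold one also has $H^{2}(Y;\mathbb{R})=0$, and by the Künneth formula $H^{3}_{c}\bigl((0,1)\times Y;\mathbb{R}\bigr)\cong H^{3}_{c}(\mathbb{R}\times Y;\mathbb{R})\cong H^{2}(Y;\mathbb{R})=0$. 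Hence the closed compactly-supported $3$-form $\zeta_3$ admits a compactly-supported primitive $\zeta_3=d\beta$; and since $\star\zeta_1$ is a closed compactly-supported $3$-form, it too has a compactly-supported primitive, and applying $\star$ one finds $\zeta_1=d^{*}\gamma$ for a compactly-supported $2$-form $\gamma$. Extending $\beta$ and $\gamma$ by zero over $\hat X_1$, we conclude that $\zeta_3$ lies in the image of $d$ and $\zeta_1$ in the image of $d^{*}$, so by the Hodge decomposition both are orthogonal to the harmonic forms. Therefore $\mathcal{D}u^{(i-1)}=\zeta_1+\zeta_3\perp\ker\mathcal{D}$, so $\alpha\perp\ker\mathcal{D}$; combined with $\alpha\in\ker\mathcal{D}$, this forces $\alpha=0$, that is, $\mathcal{D}\hat v^{(i)}=-\mathcal{D}u^{(i-1)}$.

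The only non-formal ingredient is the orthogonality $\mathcal{D}u^{(i-1)}\perp\ker\mathcal{D}$; everything else is algebra with $\mathcal{D}$ and $\Delta$. This is precisely the step that uses the assumption $H^{1}(Y;\mathbb{R})=0$ (which holds for $Y=\mathbb{RP}^3$), and accounts for its presence among the running hypotheses of this section.
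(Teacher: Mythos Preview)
Your proof is correct and follows essentially the same approach as the paper: both arguments show that the degree-$3$ part of $\mathcal{D}\hat v^{(i)}+\mathcal{D}u^{(i-1)}$ is simultaneously harmonic and exact (the exactness coming from $H^2(Y;\mathbb{R})=0$), hence zero. The only notable difference is that the paper then invokes self-duality of $\hat v^{(i)}$ and $u^{(i-1)}$ to conclude immediately that the degree-$1$ part vanishes as well (since $d^{*}\omega=-\star d\omega$ for an SD $2$-form $\omega$), whereas you run a parallel compactly-supported cohomology argument for the degree-$1$ component; your version is slightly more general but the paper's shortcut is cleaner in this SD setting.
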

\begin{proof}
To begin with, observe that \(d u^{(i-1)}\) is an exact \(3\)-form 
on \(\hat X_1\). Indeed:
\[
d u^{(i-1)} = d(\rho_2 v^{(i-1)}) = d\rho_2 \wedge v^{(i-1)}.
\]
Since \(H^{2}(Y;\mathbb R)=0\), there exists a \(1\)-form \(\alpha\) on \(P_1\) with \(d\alpha = v^{(i-1)}\). Hence: 
\[
d u^{(i-1)} = -d( d\rho \wedge \alpha)\quad \text{on \(P_1\).}
\]
Consider \(\hat v^{(i)} + u^{(i-1)}\). By construction:  
\[
\cald^2 (\hat v^{(i)} + u^{(i-1)}) = 0.
\]
Thus:
\[
\Delta \cald (\hat v^{(i)} + u^{(i-1)}) = 
\Delta d (\hat v^{(i)} + u^{(i-1)}) + 
\Delta d^{*} (\hat v^{(i)} + u^{(i-1)}) = 0.
\]
In particular, \(d (\hat v^{(i)} + u^{(i-1)})\) is harmonic. Since it is also exact, it must vanish. Both \(\hat v^{(i)}\) and \(u^{(i-1)}\) are self-dual; therefore:
\[
\mathcal{D}(\hat v^{(i)} + u^{(i-1)})
=
d(\hat v^{(i)} + u^{(i-1)})
-
*\,d(\hat v^{(i)} + u^{(i-1)})
= 0,
\]
and the lemma follows. \qed
\end{proof}
\smallskip%

Elliptic regularity now gives, for each \(k\),
\[
\|\hat v^{(i)}\|_{k,\hat X_1}
\;\le\;
A_k\,\|\Delta u^{(i-1)}\|_{k-2,\,\hat X_1}
=
A_k\,\|\Delta u^{(i-1)}\|_{k-2,\,P_1},
\]
with constants \(A_k\) depending only on \(\hat g_1\).
\smallskip%

Restrict \(\hat v^{(i)}\) to the cylindrical-end manifold:
\[
v^{(i)} = \hat v^{(i)}\big|_{X_1^\infty}.
\]
Choosing \(T_2\) large enough, assume \(\chi=1\) on \(X_1\cup Q_1\). On that region, the metrics \(g_1^\infty\) and \(\hat g_1\) agree, and hence:
\[
\|v^{(i)}\|_{k,\,X_1\cup Q_1}
\;\le\;
\|\hat v^{(i)}\|_{k,\hat X_1}
\;\le\;
A_k\,\|\Delta u^{(i-1)}\|_{k-2,\,P_1},
\]
which is precisely \eqref{vestim}.
\smallskip%

The $L^{2}$–norm of $2$–forms is 
conformally invariant; therefore, if  
\(\hat v^{(i)}\) is \(L^{2}\) for \(\hat g_1\), then it also \(L^{2}\) for \(g_1^\infty\).
\smallskip%

Finally, the equation
\[
\mathcal{D}v^{(i)} = -\,\mathcal{D}u^{(i-1)}
\]
holds on \((X_1^\infty,\hat g_1)\) by construction.  
To see that it also holds for \(g_1^\infty\), note that the kernel of \(\mathcal{D}\) is locally conformally invariant, so the only issue is the region where  
\(\mathcal{D}u^{(i-1)}\) is supported. But \(\mathcal{D}u^{(i-1)}\) is supported in \(P_1\), where \(\chi=1\) and \(g_1^\infty=\hat g_1\). This completes step (c).
\smallskip%

Iterating this procedure one obtains sequences \(\{u^{(i)}\}\) and \(\{v^{(i)}\}\) as above. Formally, one expects:
\[
\mathcal{D}\!\left(\sum_{i} u^{(i)}\right)=0.
\]
More precisely, for each \(i\):
\begin{itemize}
\item the support of \(\mathcal{D}u^{(i)}\) is contained in \(P_1\cup P_2\);
\smallskip

\item on \(P_2\) we have \(\mathcal{D}u^{(i)}=-\mathcal{D}u^{(i-1)}\) for even \(i\),  
while on \(P_1\) we have \(\mathcal{D}u^{(i)}=-\mathcal{D}u^{(i-1)}\) for odd \(i\).
\end{itemize}
\begin{lemma}\label{iteration}
Let \(k>0\). For all large enough \(T\), the series
\[
u=\sum_{i=1}^{\infty} u^{(i)}
\]
converges in the \(L^{2}_{k}\)-norm on \((X_T,g_T)\), and its tail satisfies:
\[
\|\,u - u^{(1)}\,\|_{k,\,X_T} = O(e^{-2T}), 
\quad
\|\,u - u^{(1)} - u^{(2)}\,\|_{k,\,X_T} = O(e^{-4T}).
\]
\end{lemma}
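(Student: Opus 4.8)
I would exploit two features of the construction. First, for each $i$ the source $\mathcal{D}u^{(i)}$ is supported in exactly one collar — $P_2$ when $i$ is odd, $P_1$ when $i$ is even — and that collar lies at cylindrical depth $\approx T$ inside the open piece $X_j^\infty$ carrying $v^{(i)}$. Second, $v^{(i)}$ (and also $v^{(1)}=\psi$) is $g_j^\infty$-self-dual-harmonic and $L^{2}$, and such forms decay along a cylindrical end at a rate bounded below by $2$; this is exactly what the conformal compactification $\hat g_1=e^{-2t}g_1^\infty$ encodes, since by conformal invariance of the self-dual harmonic $2$-form equation in dimension four and removability of $L^{2}$ point singularities every such form extends smoothly over the orbifold point of $\hat X_1$, hence is $O(e^{-2t})$ in the cylindrical frame together with all its derivatives, with constants uniform in $i$ by the standard cylindrical-end decay estimates. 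Combining these with the elliptic estimate \eqref{vestim} produces a contraction by a factor $O(e^{-2T})$ at each step of the recursion, and since $\|\mathcal{D}u^{(1)}\|$ is itself $O(e^{-2T})$, the series $\sum u^{(i)}$ is dominated by a geometric series of ratio $q=O(e^{-2T})$, which gives both convergence and the two tail bounds.

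\textbf{Main steps.} (1)~Record that, for $i$ odd, $u^{(i)}=\rho_1 v^{(i)}$ with $\rho_1$ locally constant off $P_2$ and $\Delta v^{(i)}=0$, so $\Delta u^{(i)}=(\Delta\rho_1)v^{(i)}+2\langle\nabla\rho_1,\nabla v^{(i)}\rangle$ is supported in $P_2$ and $\|\Delta u^{(i)}\|_{k-2,P_2}\le C\|v^{(i)}\|_{k-1,P_2}$ with $C$ depending only on the fixed cut-offs (the even case is the mirror image, with $P_1$). (2)~From the decay above, $P_2$ being at depth $\approx T$ in the cylindrical end of $X_1^\infty$, summing over unit shells gives, for every $m$,
\[
\|v^{(i)}\|_{m,P_2}\le C_m\,e^{-2T}\|v^{(i)}\|_{m,X_1\cup Q_1},\qquad \|v^{(i)}\|_{m,X_1\cup N\cup P_2}\le C_m\|v^{(i)}\|_{m,X_1\cup Q_1}.
\]
(3)~Feeding this into \eqref{vestim} gives, for $i$ odd,
\[
\|\Delta u^{(i+1)}\|_{k-2,P_1}\le C\|v^{(i+1)}\|_{k-1,P_1}\le C e^{-2T}\|v^{(i+1)}\|_{k,X_2\cup Q_2}\le C A_k e^{-2T}\|\Delta u^{(i)}\|_{k-2,P_2},
\]
and symmetrically for $i$ even; crucially no derivative is lost, because \eqref{vestim} supplies $v^{(i+1)}$ in $L^{2}_k$ while $\Delta u^{(i+1)}$ costs only one derivative of it. (4)~Writing $b_i=\|\Delta u^{(i)}\|_{k-2,P}$ ($P$ the relevant collar) and $q=C A_k e^{-2T}$, this reads $b_{i+1}\le q\,b_i$, so $b_i\le q^{\,i-1}b_1$ and $q<1$ for $T$ large; and $b_1=\|\Delta(\rho_1\psi)\|_{k-2,P_2}\le C\|\psi\|_{k-1,P_2}\le C e^{-2T}\|\psi\|_{k-1,X_1^\infty}=O(e^{-2T})$. (5)~Then, for $i\ge 2$,
\[
\|u^{(i)}\|_{k,X_T}\le C\|v^{(i)}\|_{k,X_1\cup N\cup P_2}\le C\|v^{(i)}\|_{k,X_1\cup Q_1}\le C A_k\,b_{i-1}\le C A_k\,q^{\,i-2}b_1,
\]
so $\sum_{i\ge 2}\|u^{(i)}\|_{k,X_T}\le \frac{C A_k}{1-q}b_1<\infty$: the partial sums are Cauchy in $L^{2}_k(X_T)$, $u$ is well defined, and
\[
\|u-u^{(1)}\|_{k,X_T}\le\sum_{i\ge 2}\|u^{(i)}\|_{k,X_T}=O(b_1)=O(e^{-2T}),\qquad \|u-u^{(1)}-u^{(2)}\|_{k,X_T}\le\sum_{i\ge 3}\|u^{(i)}\|_{k,X_T}=O(q\,b_1)=O(e^{-4T}).
\]

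\textbf{Expected obstacle.} The part needing the most care is not the geometric-series bookkeeping but the decay input of step~(2): certifying that the decay rate is at least $2$, uniformly over the forms $v^{(i)}$ and in every $L^{2}_k$-norm. I would handle this through the conformal compactification already set up in the text — $v^{(i)}$ becomes a genuinely smooth form on the compact orbifold $\hat X_j$, its $C^m$-norm near the orbifold point controlled by elliptic regularity and hence, via \eqref{vestim}, by $\|\Delta u^{(i-1)}\|_{k-2,P}$ — and then translate smoothness at the $\mathbb{R}^4/\{\pm1\}$ point back into $O(e^{-2t})$ decay in the cylindrical frame. Everything else is routine.
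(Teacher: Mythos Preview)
Your proposal is correct and follows the same overall architecture as the paper: both arguments feed the exponential decay of $v^{(i)}$ along the neck into the elliptic estimate \eqref{vestim} to get a contraction $O(e^{-2T})$ per step, then sum a geometric series. The bookkeeping differs only cosmetically --- you track $b_i=\|\Delta u^{(i)}\|_{k-2,P}$ while the paper tracks $\|u^{(i)}\|_{k,X_T}$ directly and packages the intermediate inequalities as (A), (B), (C) --- but these are equivalent.

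The one substantive difference is how the decay rate $2$ is obtained. The paper does it by separation of variables on the cylinder: it expands $\psi$ (and each $v^{(i)}$) in eigenforms of $\Delta_3$ on $Y=\mathbb{RP}^3$, observes that $\mathcal{D}\psi=0$ forces the transverse pieces $\alpha_\lambda$ to be closed, and then invokes the explicit spectral fact that the first eigenvalue of $\Delta_3$ on closed $2$-forms is $4$, so every exponent $\lambda$ in the expansion is $\ge 2$. You instead read the rate off the conformal compactification already built in step~(c): since $\hat v^{(i)}$ is by construction smooth on $(\hat X_j,\hat g_j)$ and the pointwise norm of a $2$-form in dimension four scales as $|\,\cdot\,|_{g^\infty}=e^{-2t}|\,\cdot\,|_{\hat g}$, boundedness at the orbifold point gives $|v^{(i)}|_{g^\infty}=O(e^{-2t})$, with higher Sobolev norms following from interior estimates on the translation-invariant cylinder. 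Your route avoids any spectral computation on $Y$ but is specific to the case where the cross-section conformally compactifies to a smooth (orbifold) point; the paper's Fourier argument would survive replacing $\mathbb{RP}^3$ by any $Y$ with $H^1(Y;\mathbb{R})=0$, at the cost of the rate $2$ becoming the actual spectral gap. One phrasing to tighten: $v^{(i)}$ is not $g_j^\infty$-harmonic on all of $X_j^\infty$ (its $\mathcal{D}$ is supported in the near collar), but what you actually use --- that $\hat v^{(i)}$ is smooth on $\hat X_j$, hence $v^{(i)}$ decays at rate $2$ on the far part of the end --- is correct as stated later in your ``expected obstacle'' paragraph.
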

\begin{proof}
It is enough to find constants \(C_k>0\), independent of \(T\) and \(i\), such that:
\begin{equation}\label{geometric-step}
\|u^{(i)}\|_{k,\,X_T}
\;\le\;
C_k\,e^{-2T}\,\|u^{(i-1)}\|_{k,\,X_T}.
\end{equation}
The needed convergence then follows by summing a geometric series.
\smallskip%

To this end, let us study harmonic forms on the cylindrical neck. On \(N=[0,T]\times Y\), the Hodge Laplacian splits as:
\[
\Delta = -\partial_t^2 + \Delta_3,
\]
where \(\Delta_3\) is the Hodge Laplacian on \(Y\), acting in the \(y\)-variables only.  
Since \(\psi\) is harmonic on \(X_1^\infty\), its restriction to \(N\) admits a Fourier expansion:
\begin{equation}\label{psi-expansion}
\psi = \sum_{\lambda}
\bigl(\alpha_\lambda + \star\alpha_\lambda\bigr)\,e^{-\lambda t},
\end{equation}
where \(\lambda\) ranges over square roots (positive and negative) of eigenvalues of \(\Delta_3\) acting on \(2\)-forms, the \(\alpha_\lambda\) are \(\Delta_3\)-eigenforms with eigenvalue \(\lambda^2\), and \(\star\) is the Hodge star on \(Y\).
\smallskip%

Eigenvalues and eigenforms of \(\Delta_3\) are known (see, e.g., \cite{Foll}). $\cald\psi=0$ forces
$d\alpha_\lambda = 0$. Hence, only closed eigenforms occur.  
The smallest eigenvalue of the Hodge Laplacian on closed \(2\)-forms on the unit sphere \(S^3\) is \(4\), and these eigenforms descend to \(\mathbb{RP}^3\). Thus, the smallest eigenvalue for closed \(2\)-forms on \(Y\) is also \(4\). Consequently, every \(\lambda\) in \eqref{psi-expansion} 
satisfies \(\lambda \ge 2\). (Only the fact that the smallest eigenvalue is positive will be used below.) 
\smallskip%

The $L^2$ condition on $X_1^\infty$ rules out negative $\lambda$ in
\eqref{psi-expansion}, so we may rewrite:
\[
\psi = \sum_{\lambda\ge2}
(\alpha_\lambda+\star\alpha_\lambda)\,e^{-\lambda t}.
\]
For \(s\in[0,T]\), define the translated unit cylinder:
\[
Q_1+s=[s,s+1]\times Y\subset N.
\]
Since the \(\alpha_\lambda\) are independent of \(t\),
\[
\|\psi\|^2_{0,Q_1+s}
=
\sum_{\lambda\ge2}
\int_s^{s+1} 2\,e^{-2\lambda t}
\bigg(\int_Y\langle\alpha_\lambda,\alpha_\lambda\rangle\bigg)\,dt
\;\le\;
e^{-4s}\,\|\psi\|^2_{0,Q_1}.
\]
Thus:
\begin{equation}\label{psi-l2-decay}
\|\psi\|_{0,Q_1+s}
\;\le\;
e^{-2s}\,\|\psi\|_{0,Q_1}.
\end{equation}
Choose \(\delta>0\) small so that \(g_T\) remains product 
on the longer collars  
\([-\delta,\,1+\delta]\times Y\) and \([T-1-\delta,\,T+\delta]\times Y\) for all large \(T\). Interior elliptic estimates for the translation-invariant operator \(\Delta\) imply, for each \(k\),
\begin{equation}\label{psi-sobolev-decay}
\|\psi\|^{2}_{k,\,Q_1+s}
\le
B'_k\Bigl(
\|\psi\|^{2}_{0,\,Q_1+s+\delta}
+
\|\psi\|^{2}_{0,\,Q_1+s-\delta}
\Bigr),
\end{equation}
with a constant $B'_k$ independent of $s$ and $T$. Combining
\eqref{psi-l2-decay} and \eqref{psi-sobolev-decay}, we find: 
\[
\|\psi\|^2_{k,Q_1+s}
\le
B_k e^{-4s}\,\|\psi\|^2_{k,Q_1}
\]
for a suitable $B_k$ independent of $s$ and $T$. Summing over
$s=0,1,\dots,T-1$, we obtain:
\[
\|\psi\|^2_{k,N}
=
\sum_{j=0}^{T-1}\|\psi\|^2_{k,Q_1+j}
\le
D_k\,\|\psi\|^2_{k,Q_1},
\]
and at the far end of the neck,
\[
\|\psi\|^2_{k,P_2}
=
\|\psi\|^2_{k,Q_1+T}
\le
B_k e^{-4T}\,\|\psi\|^2_{k,Q_1}.
\]
$B_k,D_k$ depend only on $(Y,g_Y)$ and the choice of
$\delta$. Hence, the same estimates hold with \(v^{(i)}\) in place of \(\psi\):
\begin{equation}\label{v-neck}
\begin{aligned}
\|v^{(i)}\|_{k,N}^2
&\le D_k\,\|v^{(i)}\|_{k,Q_1}^2,
&
\|v^{(i)}\|_{k,P_2}^2
&\le B_k e^{-4T}\,\|v^{(i)}\|_{k,Q_1}^2,
&&\text{if $i$ is odd},\\[0.4em]
\|v^{(i)}\|_{k,N}^2
&\le D_k\,\|v^{(i)}\|_{k,Q_2}^2,
&
\|v^{(i)}\|_{k,P_1}^2
&\le B_k e^{-4T}\,\|v^{(i)}\|_{k,Q_2}^2,
&&\text{if $i$ is even}.
\end{aligned}
\end{equation}

On \(Q_1\) (for odd \(i\)) and on \(Q_2\) (for even \(i\)), we have  \(u^{(i)} = v^{(i)}\). Since 
$u^{(i)}$ and $v^{(i)}$ also agree on $N$, \eqref{v-neck} 
implies:
\begin{equation}\tag{A}\label{A}
\|u^{(i)}\|^2_{k,N}
\le
D_k\,\|u^{(i)}\|^2_{k,Q_1}
\quad\text{for $i$ odd,}
\qquad
\|u^{(i)}\|^2_{k,N}
\le
D_k\,\|u^{(i)}\|^2_{k,Q_2}
\quad\text{for $i$ even.}
\end{equation}
$\rho_1,\rho_2$ are smooth, so for we obtain:
\[
\|u^{(i)}\|^2_{k,P_2}
\le
E_k\|v^{(i)}\|^2_{k,P_2}
\quad\text{for $i$ odd,}
\qquad
\|u^{(i)}\|^2_{k,P_1}
\le
E_k\|v^{(i)}\|^2_{k,P_1}
\quad\text{for $i$ even,}
\]
Combining this
with \eqref{v-neck}, we find:
\begin{equation}\tag{B}\label{B}
\|u^{(i)}\|^2_{k,P_2}
\le
F_k e^{-4T}\,\|u^{(i)}\|^2_{k,Q_1}
\quad\text{for $i$ odd,}
\qquad
\|u^{(i)}\|^2_{k,P_1}
\le
F_k e^{-4T}\,\|u^{(i)}\|^2_{k,Q_2}
\quad\text{for $i$ even,}
\end{equation}
where $F_k = B_k E_k$.
\smallskip%

Finally, using \eqref{vestim} together with continuity of \(\Delta\), we obtain:
\begin{equation}\tag{C}\label{C}
\begin{aligned}
\|u^{(i)}\|_{k,\,X_1\cup Q_1}
&\le
A_k\,\|\Delta u^{(i-1)}\|_{k-2,\,P_1}
\le
G_k\,\|u^{(i-1)}\|_{k,\,P_1},
&&\text{if $i$ is odd},\\[0.4em]
\|u^{(i)}\|_{k,\,X_2\cup Q_2}
&\le
A_k\,\|\Delta u^{(i-1)}\|_{k-2,\,P_2}
\le
G_k\,\|u^{(i-1)}\|_{k,\,P_2},
&&\text{if $i$ is even},
\end{aligned}
\end{equation}
for some \(G_k\). 
\smallskip%

Let us now estimate \(\|u^{(i)}\|_{k,X_T}\) for odd \(i\) (the even case is symmetric). We compute:
\begin{multline*}
\|u^{(i)}\|^2_{k, X_{T}} = 
\|u^{(i)}\|^2_{k, X_1} + 
\|u^{(i)}\|^2_{k, N} + 
\|u^{(i)}\|^2_{k, P_2} 
\stackrel{\eqref{A}}{\leq} 
\|u^{(i)}\|^2_{k, X_1} + 
D_k \|u^{(i)}\|^2_{k, Q_1} + 
\|u^{(i)}\|^2_{k, P_2} 
\stackrel{\eqref{B}}{\leq}\\
\leq 
\|u^{(i)}\|^2_{k, X_1} + 
D_k \|u^{(i)}\|^2_{k, Q_1} +
e^{-4\,T} F_k \|u^{(i)}\|^2_{k, Q_1} \leq 
H_{k} \|u^{(i)}\|^2_{k, X_1 \cup Q_1} 
\stackrel{\eqref{C}}{\leq}\\ 
\leq H_{k} G_k^2 \|u^{(i-1)}\|^2_{k, P_1} 
\stackrel{\eqref{B}}{\leq} 
e^{-4\,T} F_k H_{k} G_k^2 \|u^{(i-1)}\|^2_{k, Q_2} 
\leq e^{-4\,T} C_k^2 \|u^{(i-1)}\|^2_{k, X_T}.
\end{multline*}
This completes the proof. \qed
\end{proof}
\smallskip%

Let us now isolate the contribution of \(u^{(1)}\) to the full series \(u\). Along the neck, write the Fourier expansion of \(\psi\) as:
\begin{equation}\label{psi-four}
\psi = \sum_{\lambda\ge2}\psi_\lambda,\qquad
\psi_\lambda = a_\lambda e^{-\lambda t},
\end{equation}
where each \(\psi_\lambda\) is an SD eigenform of
\(-\partial_t^2 + \Delta_3\) with parameter \(\lambda\).
\smallskip%

Let \(v^{(2)}\) be as above. Decompose it as follows:
\[
v^{(2)} = \sum_{\lambda\ge2} v^{(2)}_\lambda,
\]
where $v^{(2)}_\lambda\in L^2(X_2^\infty)$ solves
\[
\cald v^{(2)}_\lambda
=
-\cald(\rho_1\psi_\lambda).
\]
Define $u^{(2)}_\lambda = \rho_2 v^{(2)}_\lambda$ on $X_T$ so that
\[
u^{(2)} = \sum_{\lambda\ge2}u^{(2)}_\lambda.
\]
Let \(c>0\) be such that \(c^{2}\) is the next eigenvalue of \(\Delta_3\) on closed \(2\)-forms after the minimal eigenvalue \(4\). Let us show 
that \(u^{(2)}\) is well-approximated by the part 
corresponding to the lowest eigenvalue.
\begin{lemma}\label{u2}
For each \(k\), we have:
\[
\bigg\|\sum_{\lambda>2} u^{(2)}_\lambda\bigg\|_{k,X_2}
=O(e^{-cT}),
\]
for \(c > 2\) as above.
\end{lemma}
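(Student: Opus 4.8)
\smallskip

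The plan is to isolate the contribution of the high–frequency modes of $\psi$ and push it through the elliptic estimate \eqref{vestim}. Along the neck, split $\psi = \psi_2 + \psi_{>2}$ as in \eqref{psi-four}, where $\psi_2$ collects the finitely many eigenmodes with parameter $\lambda = 2$ and
\[
\psi_{>2} \;=\; \sum_{\lambda>2}\psi_\lambda \;=\; \sum_{\lambda\ge c} a_\lambda\, e^{-\lambda t}.
\]
Since the passage $u^{(1)} = \rho_1\psi \mapsto v^{(2)}$ is linear --- one solves $\cald v^{(2)} = -\cald u^{(1)}$ by Hodge theory on the compact orbifold $\hat X_2$ --- the partial sum $\sum_{\lambda>2} v^{(2)}_\lambda$ is precisely the unique $L^2$, self--dual solution $w \perp \calh(\hat X_2)$ of
\[
\cald w \;=\; -\,\cald(\rho_1\psi_{>2}),
\]
and $\sum_{\lambda>2} u^{(2)}_\lambda = \rho_2 w$. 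As $\rho_2\equiv 1$ on $X_2$, it suffices to show $\|w\|_{k,X_2} = O(e^{-cT})$.

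First I would feed the source $\rho_1\psi_{>2}$ into step (c): its proof applies verbatim, with the same constant $A_k$ (depending only on $\hat g_2$), giving
\[
\|w\|_{k,\,X_2\cup Q_2} \;\le\; A_k\,\|\Delta(\rho_1\psi_{>2})\|_{k-2,\,P_2}.
\]
On the neck $\psi_{>2}$ is harmonic, so $\Delta(\rho_1\psi_{>2}) = [\Delta,\rho_1]\psi_{>2}$ is a first--order expression in $\psi_{>2}$ supported in $P_2$ (where $d\rho_1\ne 0$); hence $\|\Delta(\rho_1\psi_{>2})\|_{k-2,P_2} \le c_k\,\|\psi_{>2}\|_{k-1,P_2}$ with $c_k$ depending only on $\rho_1$.

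The crux is then to estimate $\|\psi_{>2}\|_{k-1,P_2}$ on $P_2 = [T-1,T]\times Y$. The modes $a_\lambda e^{-\lambda t}$ are mutually $L^2(Y)$--orthogonal, and since each $a_\lambda$ is a $\Delta_3$--eigenform with eigenvalue $\lambda^2$, elliptic estimates on $Y$ give $\|a_\lambda\|_{H^m(Y)} \le C_m\,\lambda^m\,\|a_\lambda\|_{L^2(Y)}$; combined with $\int_{T-1}^T e^{-2\lambda t}\,dt \le e^{-2\lambda(T-1)}$ this yields
\[
\|\psi_{>2}\|^2_{k-1,P_2} \;\le\; C'_k \sum_{\lambda\ge c}\lambda^{2(k-1)}\,e^{-2\lambda(T-1)}\,\|a_\lambda\|^2_{L^2(Y)}.
\]
For $T\ge 2$ and $\lambda\ge c$ one has $e^{-2\lambda(T-1)} \le e^{-2c(T-1)}e^{-2(\lambda-c)}$, and $\lambda^{2k-1}e^{-2(\lambda-c)}$ is bounded uniformly in $\lambda\ge c$, so the sum is at most $M_k\,e^{-2c(T-1)}\sum_{\lambda\ge c}\lambda^{-1}\|a_\lambda\|^2_{L^2(Y)}$; the last series converges because $\psi\in L^2(X_1^\infty)$ forces $\sum_\lambda \lambda^{-1}\|a_\lambda\|^2_{L^2(Y)}<\infty$. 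Hence $\|\psi_{>2}\|_{k-1,P_2} \le C''_k\,e^{-c(T-1)}$, and chaining the displayed inequalities gives $\big\|\sum_{\lambda>2}u^{(2)}_\lambda\big\|_{k,X_2} \le \|w\|_{k,X_2\cup Q_2} \le A_k c_k C''_k\, e^{-c(T-1)} = O(e^{-cT})$.

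The main obstacle I expect is purely bookkeeping: keeping every constant uniform in $T$ while correctly tracking the $\lambda$--dependence of the Sobolev norms of the eigenforms $a_\lambda$, so that the $\lambda$--series converges with a bound independent of $T$. Everything else is routine --- the linearity and uniqueness behind the identification $\sum_{\lambda>2}v^{(2)}_\lambda = w$, and the reuse of the step--(c) estimate. Note that this is exactly the point where the spectral gap $c>2$ is used: discarding the minimal mode $\psi_2$ is what upgrades the $O(e^{-2T})$ of Lemma~\ref{iteration} to the claimed $O(e^{-cT})$.
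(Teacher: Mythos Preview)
Your proposal is correct and follows essentially the same route as the paper: reduce via linearity and the step-(c) elliptic estimate to bounding \(\|\psi_{>2}\|_{k,P_2}\), then exploit the spectral gap \(\lambda\ge c>2\) to obtain \(O(e^{-cT})\). The only difference is cosmetic: where you compute Sobolev norms of the eigenmodes directly and sum the series, the paper simply reapplies the translation decay argument \eqref{psi-l2-decay}--\eqref{psi-sobolev-decay} to the tail \(\sum_{\lambda\ge c}\psi_\lambda\), obtaining \(\|\psi_{>2}\|_{k,P_2}\le B_k e^{-cT}\|\psi_{>2}\|_{0,Q_1}\) in one line.
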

\begin{proof}
By the definition of \(u^{(2)}_\lambda\), we have:
\[
\mathcal{D}\!\left(\sum_{\lambda\ge c} u^{(2)}_\lambda\right)
=
-\,\mathcal{D}\!\left(\rho_1\sum_{\lambda\ge c}\psi_\lambda\right).
\]
Using \eqref{C}, we find:
\[
\bigg\|\sum_{\lambda\ge c}u^{(2)}_\lambda\bigg\|_{k,X_2}
\stackrel{\eqref{C}}{\leq}
G_k\Big\|\rho_1\sum_{\lambda\ge c}\psi_\lambda\Big\|_{k,P_2}
\le
G_k\sqrt{E_k}\,
\Big\|\sum_{\lambda\ge c}\psi_\lambda\Big\|_{k,P_2}.
\]
For the tail
\(
\sum_{\lambda\ge c}\psi_\lambda,
\)
repeat the decay argument used to prove \eqref{psi-l2-decay}, but using
that now $\lambda \ge c > 2$. This gives:
\[
\Big\|\sum_{\lambda\ge c}\psi_\lambda\Big\|_{k,P_2}
\le
B_k e^{-cT} \Big\|\sum_{\lambda\ge c}\psi_\lambda\Big\|_{0,Q_1}
=
O(e^{-cT}),
\]
and the lemma follows. \qed
\end{proof}

\section{Resolution formula}\label{blowup}
Let $(M,g)$ be a Riemannian $4$–orbifold, and let $p\in M$ be an isolated
orbifold point modelled on $\mathbb R^4/\{\pm1\}$ with the standard flat metric.
Thus there exists a neighbourhood $U$ of $p$ and an isometry:
\[
   U \simeq (B/\{\pm1\},\, g_0),
\]
where $B\subset\mathbb R^4$ is a ball centered at the origin, $g_0$ is the flat metric (the action of $\{\pm1\}$ is orthogonal).
\smallskip%

Choose a linear complex structure on 
\(\mathbb R^4 \cong \mathbb C^2\) such that $g_0$ is K{\"a}hler. Then view $U$ as a complex orbifold domain. Let $\omega$ denote the K{\"a}hler form on $U$ associated to $g$, and let
$r \colon M \to \mathbb R$ be the distance to $p$ with respect to 
$g$. On $U$ one has:
\[
\omega = -\, d\, d^{\mathbb C} r^{2}.
\]
After rescaling $g$, we may assume:
\[
U = \{x\in M \mid r(x) < 1\}.
\]
The unitary group $U(2)$ preserves $(\mathbb C^2, g_0)$ and commutes with the action of $\{\pm1\}$; hence, it induces an isometric action on $U$ that leaves both $r$ and $\omega$ invariant.
\smallskip%

Let \(Y \subset U\) be defined as:
\[
Y = \{x \in M \mid r(x) = 1\},
\]
and let \(g_{Y}\) be the induced metric. Geometrically, \(Y\) is the unit sphere in \(\mathbb C^{2}\) modulo the antipodal map.
\smallskip%

Choose \(a>0\) so small that \(a \cdot e < 1\) (\(\ln e = 1\)), and set:
\[
   N = \{x\in U \mid a \le r(x) \le a\cdot e\}.
\]
To apply Lemma \ref{iteration}, introduce the annuli:
\[
   P_1 = \{ x\in U \mid a\cdot e \le r(x) \le a\cdot e^{2} \},
   \qquad
   P_2 = \{ x\in U \mid a\cdot e^{-1} \le r(x) \le a \},
\]
which agree with the notation in \S\ref{neck}.
\smallskip%

The metric on \(N\) has the form:
\[
   dr \otimes dr + r^{2} g_{Y}.
\]
Hence, \(N\) is conformally equivalent to the cylinder
\([0,1] \times Y\) with the product metric:
\[
   dt \otimes dt + g_Y.
\]
Let us modify \(M\) as follows. Consider the neighbourhood of \(p\) defined as:
\[
   U_a = \{ x \in U \mid r(x) < a \} \subset M.
\]
This neighbourhood contains the cuff \(P_2\) but not the neck \(N\).  
Let
\[
   \sigma \colon X_2 \longrightarrow U_a
\]
be the minimal resolution of \(U_a\), obtained by blowing up the orbifold point \(p\). The exceptional set is an embedded sphere \(C \subset X_2\) with self–intersection \((-2)\).
\smallskip

Consider a smaller neighbourhood of \(p\) defined as:
\[
   U_\delta = \{ x \in U \mid r(x) < \delta \} \subset M.
\]
By standard complex–analytic results (see, e.g., Ch.\,1 of \cite{G-H}), the complex surface \(X_2\) admits a strictly 
plurisubharmonic function \(h\colon X_2 \to \mathbb R\) such that:
\[
   h = r^{2} \quad \text{on } X_2 - \sigma^{-1}(U_\delta).
\]
Define a K{\"a}hler form:
\[
   \omega' = -\, d\, d^{\mathbb C} h \quad \text{on } X_2,
\]
and let \(g'\) be the associated Riemannian metric. By construction, \(g\) and \(g'\) agree away from \(U_\delta\).
\smallskip%

Let us now build a manifold \(X\) by cutting out \(U_a\) from \(M\) and gluing in \((X_2, g')\) along the common boundary:
\[
   X = (M - U_a) \,\cup_{\partial X_2}\, X_2.
\]
Introduce a conformal transformation of the metric on \(X\). 
Let \(\chi \colon X \to \mathbb R\) be a positive function such that the \(\chi\)-rescaled metric on \(P_1 \cup N \cup P_2\) takes the form:
\[
   dt \otimes dt + g_Y,
\]
and such that \(\chi = 1\) away from a small neighbourhood of \(P_1 \cup N \cup P_2\). For \(T>0\), let \((X_T, g_T)\) denote the space obtained from \(X\) by replacing the factor \([0,1]\) in \(N\) by \([0,T]\).  
When \(T\) is large, \((X_T, g_T)\) contains a long cylindrical region.

\begin{lemma}\label{blowup-lemma}
Let \(M\), \(X\), and \(X_T\) be as above, and let \(C\subset X_2\) be the exceptional
\((-2)\)–sphere. Assume that \(M\) carries an SD harmonic 
form \(\psi\). Let \(\langle \psi,\omega\rangle_p\) denote the pointwise inner product with respect to \(g\), evaluated at \(p\). Then, for all large enough $T$, there exists an SD 
harmonic form $u$ on
$X_T$ such that:
\[
\|u - \psi\|_{C^0,\,X_1} \longrightarrow 0
\quad\text{as } T\to\infty,
\]
and
\begin{equation}\label{blowup-area}
\int_C u = A \,\langle \omega,\psi\rangle_p\, e^{-2T} + O(e^{-cT})
\end{equation}
for some constants $A>0$ and $c>2$ independent of $T$.
\smallskip%

To clarify the quantity \(\langle \omega,\psi\rangle_p\): choose a uniformizing chart  
\(\pi\colon (B,g_0)\to (U,g)\). Both \(\omega\) and \(\psi\) are smooth on \(U\); thus their lifts  
\(\pi^*\omega\) and \(\pi^*\psi\) are smooth, \(\{\pm 1\}\)-invariant forms on \(B\). Define \(\langle \omega,\psi\rangle_p\) by 
\(\langle \omega,\psi\rangle_p = \big\langle \pi^* \omega,\, \pi^* \psi \big\rangle_{0\in B}\), which is independent of the chosen uniformizing chart.
\end{lemma}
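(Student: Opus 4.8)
The plan is to run the neck–stretching machinery of \S\ref{neck} on the decomposition $X_T=X_1\cup N\cup X_2$ built in \S\ref{blowup} (so $X_1$ is the $M$–part, $X_2$ the resolution, $N$ the neck, $Y=\mathbb{RP}^3$), with $\rho_1,\rho_2$ the cut–offs fixed there, feeding in as $\psi$ the given form of $M$, now viewed on the cylindrical–end manifold $X_1^\infty=M-\{p\}$: it lies in $\mathcal H^+(X_1^\infty)$ because the $L^2$–norm of $2$–forms is conformally invariant. Lemma~\ref{iteration} then produces $\{u^{(i)}\},\{v^{(i)}\}$ with $u^{(1)}=\rho_1\psi$ and a series $u=\sum_{i\ge1}u^{(i)}$ converging in every $L^2_k(X_T,g_T)$ once $T$ is large. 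I would check that $u$ is self–dual harmonic: self–duality is term by term, and the recursion telescopes so that $\mathcal D\big(\sum_{i\le N}u^{(i)}\big)$ is supported in $P_1\cup P_2$ with $L^2_{k-1}$–norm $O\big((C_ke^{-2T})^N\big)\to0$, whence $\mathcal D u=0$ and (by elliptic regularity) $u$ is smooth and harmonic. The first claim of the lemma then follows at once: $\rho_1\equiv1$ on $X_1$, so $u^{(1)}=\psi$ there, and $\|u-u^{(1)}\|_{k,X_T}=O(e^{-2T})$ together with Sobolev embedding give $\|u-\psi\|_{C^0,X_1}\to0$.

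For the period formula the first task is to strip $\int_C u$ down to one model term. As $\rho_1$ vanishes near $C\subset X_2-P_2$, all odd terms $u^{(1)},u^{(3)},\dots$ integrate to $0$ over $C$; as $\rho_2\equiv1$ near $C$, $\int_C u^{(2)}=\int_C v^{(2)}$; and $\|u-u^{(1)}-u^{(2)}\|_{k,X_T}=O(e^{-4T})$ (Lemma~\ref{iteration}) kills the tail. Writing $v^{(2)}=\sum_{\lambda\ge2}v^{(2)}_\lambda$ and invoking Lemma~\ref{u2} leaves
\[
\int_C u=\int_C v^{(2)}_2+O(e^{-cT}).
\]
Now along $P_2$ the lowest Fourier mode of $\psi$ equals $e^{-2T}\,\Phi(\psi(p))\,e^{-2s}$, where $s\in[0,1]$ is the cylinder coordinate on $P_2$ and $\Phi$ is the canonical isomorphism from $\Lambda^+_p$ onto the $4$–eigenspace of $\Delta_3$ on closed $2$–forms on $Y$, coming from the cone–to–cylinder conformal change (constant self–dual $2$–forms on $\mathbb R^4$ restrict to exactly these eigenforms, and a form smooth at $p$ has leading term the constant $\psi(p)$). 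Thus the $\lambda=2$ part of the forcing $\mathcal D u^{(1)}$ is $e^{-2T}\,\xi[\psi(p)]$ with $\xi[\psi(p)]$ supported in $P_2$ and independent of $T$, so $v^{(2)}_2=e^{-2T}\tilde v$ where $\tilde v\in L^2(X_2^\infty)$ is the self–dual solution of $\mathcal D\tilde v=-\xi[\psi(p)]$. Hence $\int_C u=e^{-2T}\int_C\tilde v+O(e^{-cT})$, and everything reduces to evaluating the $T$–independent linear functional $\psi(p)\mapsto\int_C\tilde v$ on $\Lambda^+_p$.

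The core is to show this functional equals $A\,\langle\,\cdot\,,\omega(p)\rangle$ with $A>0$. In the conformal frame used in \S\ref{neck}, $X_2^\infty$ is the asymptotically locally Euclidean resolution of $\mathbb C^2/\{\pm1\}$; its intersection form is negative definite (generated by $C$ with $C^2=-2$), so there is no nonzero self–dual $L^2$ harmonic $2$–form, $\mathcal H^+(X_2^\infty)=0$, and $\tilde v$ is unique. The K\"ahler form $\omega'$ is closed, coclosed and self–dual on $X_2^\infty$ (coclosedness of $2$–forms is conformally invariant in dimension~$4$), it coincides with the flat K\"ahler form $\omega$ off a small neighbourhood of $C$, and on $P_2$ it is itself a positive, $T$–independent multiple of $\Phi(\omega(p))\,e^{-2s}$ — i.e.\ it occupies the same $\lambda=2$ eigenmode as the incoming data, in the direction $\Phi(\omega(p))$. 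Therefore $\beta:=(1-\rho_1)\,\omega'=\omega'-\rho_1\omega$ is compactly supported in $X_2$, self–dual, and solves $\mathcal D\beta=-\mathcal D(\rho_1\omega)$; when $\psi(p)$ is proportional to $\omega(p)$ this is, up to a positive constant, the very equation defining $\tilde v$, so by uniqueness $\tilde v$ is a positive multiple of $\beta$ and, since $\rho_1\equiv0$ near $C$,
\[
\int_C\tilde v=(\text{positive constant})\cdot\int_C\omega'>0,
\]
by positivity of the K\"ahler form on the holomorphic curve $C$. For general $\psi$ one uses that $g'$, $\rho_1$ and the conformal factor $\chi$ may be chosen $U(2)$–invariant, so $\psi(p)\mapsto\int_C\tilde v$ is $U(2)$–equivariant; since the $U(2)$–representation $\Lambda^+_p=\mathbb R\,\omega(p)\oplus(\Lambda^{2,0}\oplus\Lambda^{0,2})_{\mathbb R}$ has a unique trivial summand, this functional is a multiple of $\langle\,\cdot\,,\omega(p)\rangle$, and the computation above makes the multiple $A>0$. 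Assembling the pieces gives $\int_C u=A\,\langle\omega,\psi\rangle_p\,e^{-2T}+O(e^{-cT})$.

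The main obstacle is that last paragraph: one has to track the conformal rescalings carefully enough to be certain that the data entering $X_2^\infty$ and the flat K\"ahler form lie in the \emph{same} $\lambda=2$ mode (the one growing like $e^{+2s}$ along the end), and then combine the negative–definiteness of the intersection form of $X_2^\infty$ with K\"ahler positivity to fix the sign of $A$; the $U(2)$–equivariance is the device that upgrades the special case $\psi(p)\parallel\omega(p)$ to the general one. By contrast, the estimates in the first two paragraphs are routine consequences of Lemmas~\ref{iteration} and~\ref{u2} plus Sobolev embedding.
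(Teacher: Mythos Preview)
Your argument is correct and tracks the paper's proof through the reduction to $\int_C v^{(2)}_2$. The only substantive difference is in the final evaluation. The paper handles this more directly: it writes the full $\lambda=2$ mode as $\psi_2=a_1\omega+a_2\operatorname{Re}\Omega+a_3\operatorname{Im}\Omega$ with $\Omega=dz\wedge dw$ the holomorphic volume form on $\mathbb{C}^2$, observes that each summand extends to a closed self-dual form on all of $X_2$ (namely $\omega'$, $\operatorname{Re}\sigma^*\Omega$, $\operatorname{Im}\sigma^*\Omega$), and takes $u^{(2)}_2=e^{-2T}(1-\rho_1)\eta$ with $\eta$ the corresponding linear combination. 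Then $\int_C\eta=a_1\int_C\omega'$ simply because $\sigma^*\Omega$ is a $(2,0)$-form and hence restricts to zero on the complex curve $C$. This bypasses your appeal to $\mathcal{H}^+(X_2^\infty)=0$ and to $U(2)$-equivariance; what it buys is that no symmetry has to be imposed on $g'$, $\chi$, or $\rho_1$, and no vanishing theorem for $L^2$ self-dual harmonic forms on the ALE resolution is needed. Your route is equally valid and more conceptual, and in fact it makes explicit the uniqueness that the paper uses tacitly when it identifies its hand-built solution $e^{-2T}(1-\rho_1)\eta$ with the iteratively constructed $v^{(2)}_2$.
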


\begin{proof}
In the neighbourhood \(U\) of \(p\), the form \(\psi\) admits a Fourier expansion:
\[
   \psi = \sum_{\lambda \ge 2} \psi_\lambda,
   \qquad
   \psi_\lambda = a_\lambda\, r^\lambda,
\]
where each \(\psi_\lambda\) is an SD harmonic form on \(U\).
Smoothness at the orbifold point forces all exponents to be positive.
Introducing the cylindrical coordinate \(t\) via \(r=e^{-t}\), the series takes the form used in \eqref{psi-four}. 
\smallskip%

Let \(P_1, P_2\) be as above, and let \(\chi\) be the function such that the \(\chi g\) satisfies:
\[
   dt \otimes dt + g_Y
   \quad\text{on } P_1 \cup N \cup P_2,
\]
and such that \(\chi\) equals \(1\) away from a small 
neighbourhood of \(P_1 \cup N \cup P_2\).
\smallskip%

Replace the original neck \(N\) by a longer cylindrical region \([0,T]\times Y\), and analytically continue \(\psi\) from \(P_1\) to \(P_2\) along this neck. On \(P_2\), the continued 
form has the expansion:
\[
   \psi = \sum_{\lambda \ge 2} \psi_\lambda\, e^{-\lambda T},
\]
and this series converges on all of \(M_2\). For \(\lambda > 2\), the norm of \(\psi_\lambda\) tends to \(0\) as \(r\to 0\). Hence, the leading contribution to \(\langle \psi,\omega\rangle\) at \(p\) comes from the \(\lambda=2\) mode, giving:
\[
   \langle \psi,\omega\rangle_p
   = e^{-2T}\,\langle \psi_2,\omega\rangle_p.
\]
Choose holomorphic coordinates \((z,w)\) on \(\mathbb C^2\), and set:
\[
   \Omega = dz \wedge dw.
\]
The form \(\Omega\) is invariant under the action of \(\{\pm 1\}\), so it descends to \(U\). The eigenspace of the Laplacian on \(Y\) with eigenvalue \(4\) is \(3\)–dimensional, and we may write:
\[
   \psi_2 = a_1\,\omega_1 + a_2\,\omega_2 + a_3\,\omega_3,
   \qquad a_i \in \mathbb R,
\]
where:
\[
   \omega_1 = \omega,\qquad
   \omega_2 = \operatorname{Re}(\Omega),\qquad
   \omega_3 = \operatorname{Im}(\Omega).
\]
These satisfy:
\[
   \langle \omega_i,\omega_j\rangle = 2\,\delta_{ij}
   \quad\text{on } U,
\]
so in particular:
\[
   \langle \psi_2, \omega\rangle = 2\,a_1.
\]
Replacing \(U_a\) by \(X_2\) produces \(X_T\). Using Lemma \ref{iteration}, construct an SD harmonic form \(u\) on \(X_T\) such that:
\[
   u = \rho_1 \psi + u^{(2)} + O(e^{-4T}),
\]
with \(d\rho_1\) supported in \(P_2\). Using Lemma \ref{u2}, when restricted to \(X_2\), we obtain:
\[
   u^{(2)} = u^{(2)}_2 + O(e^{-cT}),
   \qquad
   \mathcal{D}u^{(2)}_2 = -\, e^{-2T}\, \mathcal{D}(\rho_1 \psi_2),
\]
for some \(c>2\). To solve
\[
   \mathcal{D} u^{(2)}_2 = -\, e^{-2T}\, \mathcal{D}(\rho_1 \psi_2),
\]
observe that \(\psi_2\) extends from \(P_2\) to a closed SD form on all of \(X_2\):
\[
   \eta
   = a_1\,\omega'
   + a_2\,\operatorname{Re}(\sigma^{*}\Omega)
   + a_3\,\operatorname{Im}(\sigma^{*}\Omega).
\]
Indeed, the holomorphic form \(\sigma^{*}\Omega\) is \(g'\)–self–dual, and \(\omega'\) is K{\"a}hler for \(g'\); both remain self–dual after introducing the conformal factor \(\chi\).
\smallskip%

Thus:
\[
   u^{(2)}_2 = e^{-2T}\,(1-\rho_1)\,\eta.
\]
Since \(\rho_1 = 0\) near the exceptional sphere \(C\), we obtain:
\[
   u = e^{-2T}\,\eta + O(e^{-cT})
   \quad\text{near } C.
\]
We compute:
\[
\int_C u
   = e^{-2T} \int_C \eta + O(e^{-cT})
   = e^{-2T} \int_C a_1\,\omega' + O(e^{-cT})
   = e^{-2T}\, \langle \psi_2,\omega\rangle_p\, A + O(e^{-cT}),
\]
for some constant \(A>0\) independent of \(T\). This completes the proof. \qed
\end{proof}

\section{Proof of Theorem \ref{t:main}}\label{section-main-proof} 
It suffices to treat the case \(b_2^{+}=3\); the 
general case is no harder. 
Let \((M,g)\) be an oriented Riemannian \(4\)-orbifold with isolated singular
points \(p_1,\ldots,p_n\), 
each of type \(\mathbb{R}^4/\{\pm1\}\). By Proposition~\ref{cone_zero}, we 
may choose \(g\) so that there exists a
\(g\)–self–dual harmonic \(2\)–form 
\(\psi_1\) which vanishes at every
\(p_\alpha\). Since \(b_2^{+}(M)=3\), we can choose further SD harmonic forms
\(\psi_2,\psi_3\) so that \(\psi_1,\psi_2,\psi_3\) 
is a basis for SD harmonic forms.
\smallskip%

For each \(\alpha = 1,\ldots,n\), fix a neighbourhood
\(V_\alpha\) of \(p_\alpha\) with an isometry
\[
V_\alpha \simeq (B/\{\pm1\}, g_0),
\]
where \(B \subset \mathbb R^4\) is a ball centered 
at the origin and
\(g_0\) is the flat metric. Choose linear complex structures \(J_\alpha\) on
\(B\) such that \(g_0\) is K{\"a}hler. \(J_\alpha\) 
is invariant under the
antipodal map, so it descends to \(V_\alpha\). Let \(\omega_\alpha\) be 
the resulting K{\"a}hler form on \(V_\alpha\).
\smallskip%

By rotating \(J_\alpha\), we
may arrange that, for each \(\alpha\),
\[
\langle \omega_\alpha , \psi_1 \rangle_{p_\alpha}
=
\langle \omega_\alpha , \psi_2 \rangle_{p_\alpha}
= 0.
\]
For each \(\alpha\), let 
\(D_{\alpha} \subset \mathbb R^3\) be 
a \(3\)–disc with coordinates 
\((s_{\alpha, 1}, s_{\alpha, 2}, s_{\alpha, 3})\). Set: 
\[
D^{3n} = D_1 \times \cdots \times D_n \subset \mathbb R^{3n}.
\]
Let \(g_s\), \(s\in D^{3n}\), be a family of orbifold metrics with \(g_0 = g\). 
Let \(a_i \in H^2(M; \mathbb R)\) denote the cohomology class of \(\psi_i\), 
and let \(\psi_{i,s}\) be the \(g_s\)–self–dual part of the \(g_s\)–harmonic
representative of \(a_i\). For each \(\alpha\) 
and \(i\), define:
\[
\pi_{\alpha, i}(s)
=
\big\langle \omega_\alpha , \psi_{i,s} \big\rangle_{p_\alpha},
\quad
\alpha=1,\ldots,n,\ i=1,2,3,
\]
where the inner product is computed using \(g_s\).
By construction, \(\pi_{\alpha, i}(0)=0\) 
for all \(\alpha,i\).
\begin{lemma}\label{brun}
There exist neighbourhoods
\(U_\alpha \subset V_\alpha\) of \(p_\alpha\) and a family of metrics
\(g_s\), \(s\in D^{3n}\), such that
\(g_s = g\) on each \(U_{\alpha}\) for all \(s \in D^{3n}\), and
such that the map
\[
s \to \big(\pi_{\alpha, i}(s)\big)_{\alpha, i},
\]
is a local diffeomorphism near \(s=0\).
\end{lemma}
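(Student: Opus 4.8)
The plan is to realize each of the $3n$ "period coordinates" $\pi_{\alpha,i}(s)$ as being controlled, to first order, by the single coordinate $s_{\alpha,i}$ through a localized metric perturbation supported away from the $U_\alpha$'s, so that the differential $D\pi$ at $s=0$ is upper triangular (indeed, essentially diagonal) and invertible; the Inverse Function Theorem then gives the local diffeomorphism. Concretely, first I would fix for each $\alpha$ a small inner neighbourhood $U_\alpha\Subset V_\alpha$ of $p_\alpha$, and choose the perturbation $g_s$ so that $g_s-g$ is supported in the annular shell $V_\alpha\setminus U_\alpha$, indexed by $s_\alpha=(s_{\alpha,1},s_{\alpha,2},s_{\alpha,3})$; then $g_s=g$ on each $U_\alpha$ for all $s$ as required, and the perturbations for distinct $\alpha$ have disjoint support, so the Jacobian decomposes into $n$ blocks and it suffices to treat a single $\alpha$.

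For the single-$\alpha$ analysis, the key is a variational formula for how $\psi_{i,s}$ — the $g_s$-self-dual part of the $g_s$-harmonic representative of the fixed class $a_i$ — varies with the metric. Differentiating the harmonic-projection equations at $s=0$, one gets $\tfrac{\partial}{\partial s_{\alpha,j}}\psi_{i,s}\big|_0 = -\,\Pi_{+}\big(\dot\star_{\alpha,j}\,\psi_{i,0}\big)$ up to an exact/harmonic correction that is $L^2$-orthogonal to what we pair against, where $\dot\star_{\alpha,j}$ is the derivative of the Hodge star in the direction $\partial_{s_{\alpha,j}}$ and $\Pi_+$ is $L^2$-projection onto harmonic SD forms. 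Since $\langle\omega_\alpha,\cdot\rangle_{p_\alpha}$ only sees the value at $p_\alpha$, and $g_s=g$ near $p_\alpha$, the pairing $\pi_{\alpha,i}(s)=\langle\omega_\alpha,\psi_{i,s}\rangle_{p_\alpha}$ differentiates cleanly. The remaining task is to choose, for each $j\in\{1,2,3\}$, a compactly-supported symmetric $2$-tensor $h_{\alpha,j}$ on the shell whose infinitesimal effect produces a change in the SD-harmonic representative of $a_j$ that is \emph{nonzero} when evaluated against $\omega_\alpha$ at $p_\alpha$, while being (after a final linear adjustment) small against the other two classes; this is where the condition $b_2^+(M)\ge 1$ — in fact here $b_2^+=3$, so $\psi_1,\psi_2,\psi_3$ span a $3$-dimensional SD space — is used, since one needs enough room in $\mathcal H_{+,g}$ to move the three periods at $p_\alpha$ independently.

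The main obstacle is verifying that these three infinitesimal period variations at the point $p_\alpha$ are \emph{linearly independent} as functionals of the perturbation $h_\alpha$ — equivalently, that the $3\times 3$ matrix $\big(\partial_{s_{\alpha,j}}\pi_{\alpha,i}(0)\big)_{i,j}$ can be made invertible by a suitable choice of the three tensors $h_{\alpha,j}$. This reduces to a non-degeneracy statement: the map sending a symmetric $2$-tensor $h$ supported in the shell to the triple $\big(\langle\omega_\alpha,\Pi_+(\dot\star_h\psi_i)\rangle_{p_\alpha}\big)_{i=1,2,3}\in\mathbb R^3$ is surjective. One shows surjectivity by contradiction: if the image were a proper subspace, there would be a nonzero covector annihilating it, i.e.\ a linear combination $\sum_i c_i\,\langle\omega_\alpha,\Pi_+(\dot\star_h\psi_i)\rangle_{p_\alpha}=0$ for all shell-supported $h$; expanding $\Pi_+$ and integrating by parts turns this into a pointwise constraint forcing a certain harmonic SD form (built from $\sum_i c_i\psi_i$ and the Green's operator applied to a multiple of $\omega_\alpha\delta_{p_\alpha}$) to vanish identically on the shell, hence everywhere by unique continuation for the elliptic system $\mathcal D$, contradicting $c\ne 0$ unless $\langle\omega_\alpha,\sum c_i\psi_i\rangle_{p_\alpha}=0$ for the \emph{original} metric — but by the normalization $\langle\omega_\alpha,\psi_1\rangle_{p_\alpha}=\langle\omega_\alpha,\psi_2\rangle_{p_\alpha}=0$ and a generic choice of $\psi_3$ one arranges that the relevant functional is onto. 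Once invertibility of the Jacobian is established blockwise, assembling the $n$ disjoint families and invoking the Inverse Function Theorem completes the proof. $\hfill\qed$
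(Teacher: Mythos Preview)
Your overall strategy --- compute the linearisation of $s\mapsto(\pi_{\alpha,i}(s))$ via a variational formula for the self-dual harmonic representatives, and prove surjectivity by a unique-continuation argument --- is reasonable and is in the spirit of the references the paper invokes (it does not give its own argument, but defers to Lemma~4 of \cite{S-S-1}, which in turn rests on \cite{LeBr, Honda}). However, there is a genuine gap in your reduction step.

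You claim that because the metric perturbations indexed by $s_\alpha$ and $s_\beta$ have disjoint support (in the shells $V_\alpha\setminus U_\alpha$ and $V_\beta\setminus U_\beta$), ``the Jacobian decomposes into $n$ blocks and it suffices to treat a single $\alpha$.'' This is false. The functional $\pi_{\alpha,i}(s)=\langle\omega_\alpha,\psi_{i,s}\rangle_{p_\alpha}$ depends on the value of the \emph{global} $g_s$-self-dual harmonic form $\psi_{i,s}$ at $p_\alpha$, and a compactly supported metric perturbation near $p_\beta$ changes $\psi_{i,s}$ everywhere --- in particular at $p_\alpha$. So $\partial_{s_{\beta,j}}\pi_{\alpha,i}(0)$ has no reason to vanish for $\alpha\neq\beta$, the Jacobian is \emph{not} block diagonal, and the problem does not decouple. (Disjoint support of the $h_{\alpha,j}$ buys you that the variations of the metric commute, not that the responses of the harmonic forms localise.) Your single-$\alpha$ unique-continuation argument, even if made precise, would only show that each $3\times 3$ diagonal block can be made invertible; this does not control the off-diagonal blocks and hence does not give invertibility of the full $3n\times 3n$ matrix. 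To repair the argument you must either run the surjectivity/unique-continuation step for the full $3n$-dimensional target simultaneously (this is essentially what the transversality results of Honda provide), or else produce an honest mechanism making the off-diagonal blocks dominated by the diagonal ones --- shrinking the shells does not obviously do this, since both on- and off-diagonal responses scale the same way with the perturbation. A secondary issue: your variational formula $\dot\psi_i=-\Pi_+(\dot\star\,\psi_i)+\cdots$, with $\Pi_+$ the $L^2$-projection onto $\mathcal H_{+,g}$, is not the right object for a \emph{pointwise} pairing at $p_\alpha$; the correction terms you dismiss as ``$L^2$-orthogonal to what we pair against'' are not orthogonal to the evaluation functional $\varphi\mapsto\langle\omega_\alpha,\varphi\rangle_{p_\alpha}$, so this step also needs to be reworked.
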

\begin{proof}
The proof is identical to that of Lemma~4 in \cite{S-S-1}, which relies on \cite{LeBr, Honda}. \qed
\end{proof}
\smallskip%

For simplicity, assume \(n = 1\) and drop 
index \(\alpha\); the general case is no harder. Let \((M, g_s)\) be as in Lemma \ref{brun}. Let us apply the neck–stretching procedure of \S\ref{blowup} inside \(U\). This produces a family \((M, g_{T,s})\) which agrees with \(g_s\) on \(U\) (technically, on a slightly smaller 
neighbourhood). Let us next resolve the double point \(p\) using the complex structure \(J\). Let \(C\) denote the exceptional \((-2)\)-sphere, and let the resulting smooth 
family of \(4\)-manifolds be denoted by \(X_{T,s}\). 
\smallskip%

By Lemma~\ref{blowup-lemma}, there exist SD harmonic forms \(u_{i,s}\) on \(X_{T,s}\) such that:
\begin{equation}\label{eq:ui-orb}
\int_C u_{i,s}
=
A\,\pi_i(s)\,e^{-2T} + O(e^{-cT}),
\quad A > 0,\ c>2.
\end{equation}
For large \(T\), the error term in \eqref{eq:ui-orb} is dominated by the leading contribution; hence, the map
\[
s \to 
\left(
\int_C u_{1,s},\,
\int_C u_{2,s},\,
\int_C u_{3,s}
\right)
\]
is surjective onto a neighbourhood of the origin in \(\mathbb{R}^3\). This completes the proof.

\section{Near-contact forms}\label{near_contact_section}
In this section we consider contact forms with isolated, nondegenerate singularities on \(3\)-manifolds. The goal is to show how Eliashberg's \(h\)-principle gives a complete classification of such forms up to deformation.
\smallskip%

Let \(Y\) be a \(3\)-manifold. 
A \(1\)-form \(\lambda\) on \(Y\) is called a \emph{near-contact form} if:
\begin{enumerate}
\item \(\lambda\) is transverse to the zero section of \(T^{*}_Y\), so its zero set 
\(\lambda^{-1}(0)=\{p_1,\ldots,p_k\}\) is a discrete (for simplicity, finite) set of points;
\smallskip%

\item there exists a volume form \(\operatorname{vol}\) on \(Y\) such that, writing
\[
\lambda \wedge d\lambda = f\,\operatorname{vol}.
\]
the function \(f\) is 
positive on \(Y - \{p_1,\ldots,p_k\}\), and each \(p_i\) is a nondegenerate local minimum of \(f\). (This condition then holds for any volume form inducing the same orientation as \(\lambda \wedge d\lambda\).)
\end{enumerate}

\begin{lemma}\label{lambda_closed}
Suppose that \(p\) is a transverse zero of \(\lambda\) and that \(\lambda\) is contact in a small punctured neighbourhood of \(p\); then \(d\lambda\) also vanishes at \(p\).
\end{lemma}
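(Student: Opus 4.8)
The plan is to linearize $\lambda$ at $p$ and pit the contact condition against transversality. Fix a chart $(x_1,x_2,x_3)$ centred at $p$, so that $\lambda=\sum_i a_i(x)\,dx_i$ with $a_i(0)=0$, and let $A$ be the Jacobian $\bigl(\partial a_i/\partial x_j(0)\bigr)$. Transversality of the zero at $p$ means precisely that $A$ is invertible. Write $A=S+K$ with $S$ symmetric and $K$ antisymmetric. Since $d\lambda(p)$ is simply the exterior derivative of the linear part $\lambda^{(1)}=\sum_{i,j}A_{ij}x_j\,dx_i$ of $\lambda$, it is the antisymmetrization of $A$; concretely $\iota_w\,d\lambda(p)=0$ is equivalent to $Kw=0$, and $d\lambda(p)=0$ is equivalent to $K=0$. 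So it suffices to derive a contradiction from $K\neq0$. Assume $K\neq0$. A nonzero $2$-form on a $3$-dimensional space is decomposable, hence has a $1$-dimensional radical; fix a nonzero vector $w$ spanning it, so $Kw=0$.

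Next I read off the first-order term of the contact function. Choosing any volume form $\mathrm{vol}$ near $p$, write $\lambda\wedge d\lambda=f\,\mathrm{vol}$ with $f$ smooth; the contact hypothesis says $f\neq0$ on a punctured neighbourhood of $p$, while $f(p)=0$ since $\lambda(p)=0$. Therefore $p$ is a local extremum of $f$, so $df(p)=0$. To compute this differential, note $\lambda=\lambda^{(1)}+O(|x|^2)$ and $d\lambda=d\lambda(p)+O(|x|)$, whence $\lambda\wedge d\lambda=\lambda^{(1)}\wedge d\lambda(p)+O(|x|^2)$. In dimension $3$, for any $1$-form $\alpha$ one has $\alpha\wedge d\lambda(p)=\alpha(w)\cdot\mu$ for a fixed nonzero $3$-form $\mu$, where $w$ spans the radical of $d\lambda(p)$. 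Applying this with $\alpha=\lambda^{(1)}_x$ and using $\lambda^{(1)}_x(w)=\langle Ax,w\rangle=\langle Sx,w\rangle+\langle Kx,w\rangle=\langle x,Sw\rangle$ — the middle term vanishes because $\langle Kx,w\rangle=-\langle x,Kw\rangle=0$, the first uses symmetry of $S$ — we conclude that $df(p)$ is a nonzero scalar multiple of the covector $\langle Sw,\,\cdot\,\rangle$.

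Combining the two observations: $df(p)=0$ forces $Sw=0$, and with $Kw=0$ this gives $Aw=Sw+Kw=0$, contradicting invertibility of $A$ because $w\neq0$. Hence $K=0$, i.e.\ $d\lambda(p)=0$, as claimed. Two remarks on the shape of the argument. First, only the linear behaviour of $\lambda$ at $p$ ever enters: the contact condition controls exactly $Sw$ and transversality supplies the remaining part of $Aw$, so no higher Taylor data is needed. Second, the one place demanding care is the middle paragraph — correctly identifying the leading term of $\lambda\wedge d\lambda$ with $\langle Sw,x\rangle$, i.e.\ recognizing that it is governed by the symmetric part of the linearization evaluated on the radical direction of $d\lambda(p)$; everything surrounding it is elementary linear algebra.
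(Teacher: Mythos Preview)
Your proof is correct and is essentially the paper's argument written out in coordinates: both observe that $f$ has a local extremum at $p$, so the first-order term $\nabla\lambda|_p\wedge d\lambda|_p$ of $\lambda\wedge d\lambda$ must vanish, and then use transversality to force $d\lambda|_p=0$. The paper's version is coordinate-free and a bit quicker—since $\xi\mapsto\nabla_\xi\lambda|_p$ surjects onto $T^*_pY$, one gets $\alpha\wedge d\lambda|_p=0$ for every covector $\alpha$ and hence $d\lambda|_p=0$ directly, without your $S+K$ split—but the substance is the same.
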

\begin{proof}
Choose a connection \(\nabla\) near \(p\). Since \(\lambda\) is contact in a punctured neighbourhood of \(p\), we have:
\[
\nabla(\lambda \wedge d\lambda)\big|_p = 0.
\]
Using
\[
\nabla(\lambda \wedge d\lambda)
= \nabla\lambda \wedge d\lambda + \lambda \wedge \nabla d\lambda,
\]
we obtain:
\[
\nabla\lambda|_p \wedge d\lambda|_p = 0.
\]
The transverse vanishing of \(\lambda\) at \(p\) now implies 
\(d\lambda|_p = 0\). This completes the proof. \qed
\end{proof}
\smallskip%

If \(p\) is a transverse zero of a \(1\)-form \(\lambda\) and 
\(d\lambda\) also vanishes at \(p\), then the bilinear form:
\begin{equation}\label{form_A}
A(\xi,\eta) = \nabla_{\xi}\lambda(\eta) \quad 
\text{for }\xi,\eta\in T_Y|_{p},
\end{equation}
is symmetric and nondegenerate. Since \(\lambda|_p=0\), the form \(A\) does not depend on \(\nabla\).
\smallskip%

\begin{lemma}
If a near-contact \(1\)-form \(\lambda\) vanishes at \(p\), then 
the form \(A\) defined by \eqref{form_A} is not definite.
\end{lemma}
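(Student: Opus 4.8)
The plan is to argue by contradiction. Suppose $A$ is definite. Since replacing $\lambda$ by $-\lambda$ leaves $\lambda\wedge d\lambda$ (hence the near-contact hypothesis and the function $f$) unchanged but replaces $A$ by $-A$, I may assume $A$ is positive definite and derive a contradiction. I would pick oriented local coordinates $x=(x^{1},x^{2},x^{3})$ centred at $p$, use the flat connection $\nabla=d$ in these coordinates (legitimate, since $A$ is connection-independent when $\lambda(p)=0$), and normalise $\operatorname{vol}=dx^{1}\wedge dx^{2}\wedge dx^{3}$ (the near-contact condition does not depend on the choice of volume form within a fixed orientation). Since $\lambda(p)=0$ and, by Lemma~\ref{lambda_closed}, $d\lambda(p)=0$, the Taylor expansion of $\lambda$ at $p$ is $\lambda=\lambda^{(1)}+\lambda^{(2)}+O(|x|^{3})$ with linear part $\lambda^{(1)}=\sum_{j,k}A_{jk}x^{j}\,dx^{k}$. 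The first key observation is that $\lambda^{(1)}$ is \emph{exact}: since $A$ is symmetric, $\lambda^{(1)}=\tfrac12\,d\bigl(x^{T}Ax\bigr)$, so $d\lambda^{(1)}=0$ and therefore $d\lambda=d\lambda^{(2)}+O(|x|^{2})$.

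Next I would extract the Hessian of $f$ at $p$. From $\lambda\wedge d\lambda=\lambda^{(1)}\wedge d\lambda^{(2)}+O(|x|^{3})$ one sees that only the $2$-jet of $\lambda$ contributes at quadratic order. The closed $2$-form $d\lambda^{(2)}$, having coefficients linear in $x$, can be written uniquely as $d\lambda^{(2)}=\iota_{Bx}\operatorname{vol}$ for a constant $3\times3$ matrix $B$, and closedness forces $\operatorname{tr}B=0$, because $d(\iota_{Bx}\operatorname{vol})=\operatorname{div}(Bx)\,\operatorname{vol}=(\operatorname{tr}B)\,\operatorname{vol}$. Using the identity $dx^{k}\wedge\iota_{v}\operatorname{vol}=v^{k}\operatorname{vol}$ one computes $\lambda^{(1)}\wedge d\lambda^{(2)}=\langle Ax,\,Bx\rangle\,\operatorname{vol}$, hence
\[
f(x)=x^{T}AB\,x+O(|x|^{3}),
\qquad
\operatorname{Hess}_{p}f=2\operatorname{Sym}(AB),\quad \operatorname{Sym}(M):=\tfrac12\bigl(M+M^{T}\bigr).
\]
Since $p$ is assumed to be a nondegenerate local minimum of $f$ with $f(p)=0$, this forces $\operatorname{Sym}(AB)$ to be positive definite.

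The final step is a short linear-algebra argument. Substituting $x=A^{-1}y$ in the inequality $x^{T}AB\,x>0$ shows that $\operatorname{Sym}(BA^{-1})$ is positive definite; writing $C=BA^{-1}$, so $B=CA$, one gets
\[
0=\operatorname{tr}B=\operatorname{tr}(CA)=\operatorname{tr}\!\bigl(\operatorname{Sym}(C)\,A\bigr),
\]
since the antisymmetric part of $C$ has zero trace against the symmetric matrix $A$. But $\operatorname{Sym}(C)$ and $A$ are both symmetric positive definite, and the trace of the product of two such matrices is strictly positive — a contradiction. Hence $A$ cannot be definite.

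I expect the main obstacle to be conceptual rather than computational: recognising that the linear part of $\lambda$ drops out of $d\lambda$, so the quadratic part of $f$ is a pairing of the symmetric linearisation matrix $A$ against a \emph{trace-free} matrix $B$ coming from the next jet of $\lambda$. Once this structure is visible, the conclusion is forced by the positivity of $\operatorname{tr}(PQ)$ for positive-definite $P,Q$ together with $\operatorname{tr}B=0$. The remaining care is routine: verifying that no other component of the $2$-jet of $\lambda$ enters $f$ at second order, and that the choices of coordinates, connection, and volume form are immaterial.
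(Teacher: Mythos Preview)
Your proof is correct and follows essentially the same route as the paper's: both identify the Hessian of $f$ at $p$ as the symmetrisation of $A\cdot B$, where $B$ is the trace-free linearisation of the vector field dual to $d\lambda$, and both conclude by a trace contradiction. The only cosmetic difference is the final linear-algebra step: the paper chooses a flat metric with $g=A$ at $p$, so that the $g$-trace of the Hessian is immediately $2\operatorname{tr}B=0$, whereas you perform the change of variables $x=A^{-1}y$ and invoke positivity of $\operatorname{tr}(PQ)$ for positive-definite $P,Q$; these are equivalent reformulations of the same identity.
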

\begin{proof}
Choose a flat metric \(g\) near \(p\), let \(\nabla\) be its flat connection, 
and let \(\operatorname{vol}_g\) be its volume form. 
Let \(Y\) be the vector field dual to \(d\lambda\) with respect to \(\operatorname{vol}_g\); that is, 
\[
\iota_Y \operatorname{vol}_g = d\lambda.
\]
Then:
\[
L_Y \operatorname{vol}_g = (\operatorname{div} Y)\,\operatorname{vol}_g = 0.
\]
Thus, the endomorphism field \(\nabla Y\colon T_Y\to T_Y\) is traceless everywhere. In particular, the linear map:
\[
B(\xi)=\nabla_\xi Y \in T_Y|_p \quad \text{for all }\xi\in T_Y|_p,
\]
satisfies \(\operatorname{tr} B = 0\). We compute:
\[
\lambda \wedge d\lambda = \lambda(Y)\, \operatorname{vol}_g.
\]
Since \(\lambda\) and \(Y\) both vanish at \(p\), it 
follows that:
\[
\nabla_\xi \nabla_\eta \lambda(Y)\big|_p
= A(\xi, B\eta) + A(\eta, B\xi)
\quad \text{for all }\xi,\eta\in T_Y|_p.
\]
Assume for contradiction that \(A\) is definite. 
Choose \(g\) so that \(g = A\) at \(p\). Set:
\[
H(\xi,\xi)=\nabla_\xi^2 \lambda(Y)\big|_p 
= 2 g(\xi, B\xi).
\]
For any \(g\)-orthonormal basis \(\{e_1,e_2,e_3\}\), we obtain:
\[
\operatorname{tr}_g H 
= \sum_{i=1}^3 H(e_i,e_i)
= 2\,\operatorname{tr} B = 0.
\]
Hence, the function \(f=\lambda(Y)\) cannot have a nondegenerate maximum or minimum at \(p\). This contradiction completes the proof. \qed  
\end{proof}
\smallskip%

Define the index of a transverse zero \(p\) of \(\lambda\) as:
\[
\operatorname{ind}_{p}\lambda = 
\operatorname{sgn}\operatorname{det} A.
\]
For a nondegenerate zero \(p\) of a near-contact form \(\lambda\), this index provides a topological obstruction to deforming one such form into another (through near-contact forms with a nondegenerate zero at \(p\)). 
Another obstruction is the orientation induced by \(\lambda\wedge d\lambda\) on a punctured neighbourhood of \(p\). We now show that there are no further obstructions, even for compactly supported deformations.
\begin{lemma}\label{local_lambda}
Let \(\lambda\) and \(\lambda'\) be near-contact \(1\)-forms defined on a neighbourhood of a point \(p \in Y\). 
Assume that \(\lambda\) and \(\lambda'\) both vanish at \(p\), that
\begin{equation}\label{ind_equal}
\operatorname{ind}_{p}(\lambda)=\operatorname{ind}_{p}(\lambda'),
\end{equation}
and that \(\lambda\wedge d\lambda\) and \(\lambda'\wedge d\lambda'\) induce the same orientation on a punctured neighbourhood of \(p\). 
Let \(W\) be any neighbourhood of \(p\) on which both forms are defined. 
Then there exist nested neighbourhoods
\[
p \in U \subset V \subset W
\]
and a family of near-contact forms \(\lambda_t,\ t\in[0,1],\) such that:

\begin{enumerate}
\item \(\lambda_0 = \lambda\) on \(W\),  
\(\lambda_t = \lambda\) on \(W - V\) for all \(t\in[0,1]\),  
and \(\lambda_1 = \lambda'\) on \(U\);
\smallskip

\item for each \(t\in[0,1]\), the form \(\lambda_t\) is near-contact on \(V\), and \(p\) is the unique zero of \(\lambda_t\) in \(V\).
\end{enumerate}
\end{lemma}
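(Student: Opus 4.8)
The plan is to reduce to the $2$–jet of $\lambda$ at $p$, to recognise $\operatorname{ind}_p\lambda$ together with the orientation of $\lambda\wedge d\lambda$ as the only invariants of such a jet, to connect any two jets sharing these invariants through near-contact jets, and finally to localise the resulting homotopy inside $V$. Fix coordinates centred at $p$. By Lemma~\ref{lambda_closed}, $d\lambda|_p=d\lambda'|_p=0$, so the symmetric bilinear forms $A,A'$ of \eqref{form_A} are defined and nondegenerate, and by the lemma preceding this one they are indefinite, hence of signature $(2,1)$ or $(1,2)$; \eqref{ind_equal} puts $A$ and $A'$ in the same one of these two classes. Writing $\lambda=\lambda^{(1)}+\lambda^{(2)}+h$ with $\lambda^{(k)}$ homogeneous of degree $k$ and $h=O(|x|^{3})$, symmetry of $A$ gives $d\lambda^{(1)}=0$, so $\lambda\wedge d\lambda=\lambda^{(1)}\wedge d\lambda^{(2)}+O(|x|^{3})$; writing $\lambda\wedge d\lambda=f\operatorname{vol}$, the leading part of $f$ is a quadratic form $f^{(2)}$ depending, for fixed $\lambda^{(1)}$, affinely on $\lambda^{(2)}$. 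On a sufficiently small ball, ``$\lambda$ is near-contact with $p$ its unique zero'' is equivalent to ``$A$ is nondegenerate and $f^{(2)}$ is definite, of the sign forced by the orientation hypothesis''. The remainder $h$ is inessential: for $V$ small, $\lambda-s\,\beta h$ ($s\in[0,1]$, $\beta$ a bump $\equiv 1$ near $p$, supported in $V$) is a near-contact family on $V$ with unique zero $p$ — the cutoff acts at a small scale $\rho$, where its contribution to $f$ is $O(\rho^{3})$, negligible against $f^{(2)}=O(\rho^{2})$ — and it carries $\lambda$ to $\lambda^{(1)}+\lambda^{(2)}$ near $p$; likewise for $\lambda'$.

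Next, I would show that the space of near-contact $2$–jets at $p$ with given $(\operatorname{ind}_p,\ \text{orientation})$ is path-connected. The pair (signature of $A$, sign of $f^{(2)}$) is locally constant on this space and is precisely the invariant data. Conversely, $(\lambda^{(1)},\lambda^{(2)})\mapsto A$ maps this space onto the space of symmetric forms of the prescribed signature, which is connected; the fibre over each such $A$ is nonempty — pull the $2$–jet of $\lambda$ back by a linear isomorphism, using Sylvester's law of inertia — and convex, since $f^{(2)}$ is affine in $\lambda^{(2)}$ and the definite quadratic forms of a fixed sign form a convex cone. Hence the space is path-connected. Concretely one first equalises $A$ and $A'$: pull $\lambda$ and $\lambda'$ back along isotopies of diffeomorphisms of $W$, supported in $V$, equal to the identity near $\partial V$ and linear near $p$ (the linearising maps may be taken orientation-preserving, hence isotopic to $\operatorname{id}$). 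Pullback by a diffeomorphism preserves the near-contact condition, so these are homotopies through near-contact forms with $p$ the only zero. Afterwards $\lambda$ and $\lambda'$ share a linear part near $p$, and one joins their quadratic parts by the segment $(1-t)\lambda^{(2)}+t\,(\lambda')^{(2)}$, along which $f^{(2)}$ moves by the corresponding convex combination and stays definite of the correct sign.

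The remaining, and essential, point is to turn these germ-level homotopies into families on $V$ that equal $\lambda$ on $W-V$ and equal $\lambda'$ on a small $U\ni p$. The diffeomorphism steps and the higher-order adjustments localise harmlessly, as above. The difficulty is the quadratic interpolation: near $p$ one must interpolate between two near-contact forms whose difference $\delta$ is only $O(|x|^{2})$ (the linear parts now agree but the quadratic parts need not), and cutting such an interpolation off over an annulus $\{|x|\sim\rho\}$ injects into $\lambda\wedge d\lambda$ an error of size $|\lambda|\cdot|d\beta|\cdot|\delta|\sim\rho\cdot\rho^{-1}\cdot\rho^{2}=\rho^{2}$ — the very order of the leading term $f^{(2)}$, with a constant one cannot dominate. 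The cure is the neck-stretching mechanism of \S\,\ref{neck}: interpolate over a logarithmically long collar, i.e.\ in the coordinate $t=-\log r$ let the cutoff vary slowly over $\{T_{1}\le t\le T_{2}\}$, so that $|d\beta|=O\!\big(1/((T_{2}-T_{1})\,r)\big)$ and the error drops to $O\!\big(\rho^{2}/(T_{2}-T_{1})\big)$, dominated once the collar is long enough; meanwhile the surviving first-order change to $f$ is a convex combination of the leading quadratic forms of $\lambda\wedge d\lambda$ and $\lambda'\wedge d\lambda'$, both positive, hence still positive. This is exactly why the linear parts must be equalised \emph{beforehand} — a mismatch there would inject an $O(\rho)$ error, far larger than the $O(\rho^{2})$ signal, so the cutoff interpolation is only viable between forms with a common linear part. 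With the collar this long, the assembled family is near-contact on $V$ with $p$ its unique zero; concatenating it with the homotopies of the first two paragraphs — and, on the $\lambda'$ side, with the reverses of the higher-order adjustment and of a final pullback by the inverse linearising diffeomorphism near $p$, so as to land on the genuine $\lambda'$ on $U$ — produces the required $\lambda_{t}$ with $p\in U\subset V\subset W$. What is left is bookkeeping with jets and the convexity already noted; the one place real care is needed is the logarithmic length of the interpolation collar.
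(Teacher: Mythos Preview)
Your approach is correct and shares the paper's key idea: once the linear parts agree, interpolate $\lambda_t=\lambda+t\rho(\lambda'-\lambda)$ with a cutoff satisfying $|\partial_r\rho|\le\delta/r$ (your ``logarithmically long collar''), so that the $d\rho$-contribution to $\lambda_t\wedge d\lambda_t$ is $O(\delta r^2)$ while the main term is the convex combination $(1-t\rho)f+t\rho f'\ge cr^2$. The paper is more direct --- it does not first truncate to the $2$-jet (the cross term $(\lambda'-\lambda)\wedge d(\lambda'-\lambda)$ is already $O(r^3)$ once $\nabla\lambda|_p=\nabla\lambda'|_p$, so higher-order terms need no separate treatment), and it glosses over the linear-part equalisation that you spell out via a diffeomorphism isotopy --- but the substance is the same.
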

\begin{proof}
To begin with, 
identify the neighbourhood \(W\) of \(p\) with the 
unit ball \(B_1\subset\mathbb{R}^3\), and let \(p\) 
correspond to the origin. Choose a flat metric 
\(g\) in \(\mathbb R^3\), let \(\nabla\) be its flat 
connection, let \(\operatorname{vol}_g\) be its volume form, and 
let \(r \colon \mathbb R^3 \to R\) be the distance from the origin.
\smallskip%

Using \eqref{ind_equal}, we may assume that:
\[
\nabla\lambda|_p = \nabla\lambda'|_p.
\]
Then:
\[
\nabla^2\!\left(
\big[(1-t)\lambda + t\lambda'\big] \wedge
d\big[(1-t)\lambda + t\lambda'\big]
\right)\!\big|_p
=
(1-t)\,\nabla^2(\lambda\wedge d\lambda)\big|_p
+
t\,\nabla^2(\lambda'\wedge d\lambda')\big|_p.
\]
Let\(f\) and \(f'\) be defined as:
\[
\lambda \wedge d \lambda = f \operatorname{vol}_g, \quad 
\lambda' \wedge d \lambda' = f' \operatorname{vol}_g.
\]
Then:
\[
(1-t)\,\nabla^2(\lambda\wedge d\lambda)\big|_p
+
t\,\nabla^2(\lambda'\wedge d\lambda')\big|_p
=
\big[(1-t)\nabla^2 f + t\nabla^2 f'\big]\,\operatorname{vol}_g\big|_p.
\]
Since \(\lambda\wedge d\lambda\) and \(\lambda'\wedge d\lambda'\) induce the same orientation near \(p\), let us choose that local orientation so that both \(f\) and \(f'\) are nonnegative. Each has a nondegenerate minimum at \(p\), hence their convex combination does also. Thus, there exists \(c>0\) such that for all \(t\):
\begin{equation}\label{uniform_morse}
(1-t)f + t f' \ge c r^2
\quad\text{near }p.
\end{equation}
Choose \(0< \varepsilon, R <1\). 
Let \(B_R\) and \(B_{\varepsilon R}\) be 
the balls of radii \(R\) and \(\varepsilon R\). 
Let \(\rho=\rho(r)\) be a nonincreasing cut-off function such that:
\[
\rho|_{B_{\varepsilon R}} = 1,
\qquad
\rho|_{B_1 - B_R} = 0.
\]
Define:
\[
\lambda_t
=
\lambda + t\,\rho\,(\lambda' - \lambda).
\]
For \(W = B_1\), \(V = B_R\), and \(U = B_{\varepsilon R}\), property (1) 
of the lemma is immediate. Let us show \(\varepsilon\) and \(R\) can be chosen so that (2) holds. We compute:
\begin{align*}
\lambda_t \wedge d\lambda_t
&=
\lambda\wedge d\lambda
+ t\rho(\lambda'-\lambda)\wedge d\lambda
+ t\,\lambda\wedge d\!\big(\rho(\lambda'-\lambda)\big)
+ t^2\rho(\lambda'-\lambda)\wedge d\!\big(\rho(\lambda'-\lambda)\big) = 
\\[3pt]
&=
(1-t\rho)\,\lambda\wedge d\lambda
+ t\rho\,\lambda'\wedge d\lambda'
- t\rho(\lambda'-\lambda)\wedge d(\lambda'-\lambda)
\\[0pt]
& \qquad\qquad\qquad
- t\,d\rho\wedge\lambda\wedge(\lambda'-\lambda)
+ t^2\rho^2(\lambda'-\lambda)\wedge d(\lambda'-\lambda) = 
\\[4pt]
&=
\big[(1-\rho t)f + \rho t f'\big]\operatorname{vol}_g
- t\,d\rho \wedge \lambda\wedge(\lambda'-\lambda)
- t\rho(1-t\rho)\,(\lambda'-\lambda)\wedge d(\lambda'-\lambda).
\end{align*}
Using \(0 \le \rho \le1\) and \eqref{uniform_morse}, 
choose \(R > 0\) small enough so that:
\begin{equation}\label{cr^2}
(1-\rho t)f + \rho t f' \ge c r^2
\quad\text{on }B_R.
\end{equation}
For a domain \(A\), write \(\|\cdot\|_{k,A}\) for the \(C^k\)-norm on \(A\).
\smallskip%

Using:
\[
\lambda|_p=\lambda'|_p = 0,
\quad
\nabla(\lambda'-\lambda)|_p = 0,
\quad
d\lambda|_p = d\lambda'|_p = 0,
\]
there exists \(C>0\) such that for all \(r\),
\begin{equation}\label{lambda_ck}
\|\lambda\|_{0,B_r} \le Cr,\quad
\|\lambda'\|_{0,B_r} \le Cr,\quad
\|\lambda'-\lambda\|_{0,B_r} \le Cr^2,
\quad
\|d\lambda\|_{0,B_r} \le Cr,\quad
\|d\lambda'\|_{0,B_r} \le Cr.
\end{equation}
Let \(f_t\) be defined as:
\[
\lambda_t\wedge d\lambda_t = f_t\,\operatorname{vol}_g.
\]
On \(B_R\):
\begin{align*}
f_t
&\ge
\big[(1-\rho t)f + \rho t f'\big]
 - t\|d\rho\|_{0,B_r}\|\lambda\|_{0,B_r}\|\lambda'-\lambda\|_{0,B_r}
 - t\rho(1-t\rho)\|\lambda'-\lambda\|_{0,B_r}\|d(\lambda'-\lambda)\|_{0,B_r} \ge \\ 
&\stackrel{\eqref{cr^2}}{\ge}
c r^2
 -
\|d\rho\|_{0,B_r}\|\lambda\|_{0,B_r}\|\lambda'-\lambda\|_{0,B_r}
 -
\|\lambda'-\lambda\|_{0,B_r}\|d(\lambda'-\lambda)\|_{0,B_r} \ge \\
& \stackrel{\eqref{lambda_ck}}{\ge}
c r^2 - C^2 r^3\|d\rho\|_{0,B_r} - 2C^2 r^3.
\end{align*}
Fix \(R\). 
For each \(\delta>0\), there exists \(\varepsilon>0\), depending on \(\delta\) but not on \(R\), and a nonincreasing 
\(\rho=\rho(r)\) such that:
\[
\rho=1 \text{ on } r\le\varepsilon R,
\qquad
\rho=0 \text{ on } r\ge R,
\qquad
\partial_r\rho \ge -\delta/r.
\]
In the logarithmic coordinate \(t=\ln r\), this is immediate: 
we want \(\rho=1\) for \(t\le \ln R - \ln(1/\varepsilon)\) and \(\rho=0\) for \(t\ge\ln R\), with \(\partial_t\rho\ge -\delta\).
\smallskip%

With such a choice:
\[
f_t \ge c r^2 - (c/2) r^2 - 2C^2 r^3
\quad\text{on }B_R.
\]
Thus, for \(R>0\) small enough, \(f_t \ge \frac{c}{3} r^2\) on \(B_R\). This completes the proof. \qed
\end{proof}
\smallskip%

Taubes \cite{Taub-5} constructs a near-contact form on \(\mathbb{R}^3\) with a zero at \(0\in\mathbb{R}^3\) such 
that every sufficiently small punctured neighbourhood of \(0\) contains an overtwisted disc. 
For completeness, we give a proof of this result by 
constructing a sequence of such forms.

\begin{lemma}\label{overtwisted}
There exists a near-contact form \(\lambda\) defined in a neighbourhood of \(0\in\mathbb{R}^3\) such that \(\lambda\) has a zero at \(0\), and such that for each small enough neighbourhood \(0\in U\subset\mathbb{R}^3\), the punctured set 
\(U - \{0\}\) contains an overtwisted disc.
\end{lemma}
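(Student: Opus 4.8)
The plan is to manufacture $\lambda$ from an explicit near-contact model by performing infinitely many localized Lutz twists in a chain of shrinking shells around $0$, each twist contributing one overtwisted disc.

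First I would fix a model. On $\mathbb{R}^3$ take
\[
\lambda_0 \;=\; x\,dx + y\,dy - z\,dz + z\,(y\,dx - x\,dy).
\]
A direct computation gives $\lambda_0\wedge d\lambda_0 = (x^2+y^2+2z^2)\,dx\wedge dy\wedge dz$, so the function $f_0$ with $\lambda_0\wedge d\lambda_0=f_0\,\operatorname{vol}$ is positive on $\mathbb{R}^3-\{0\}$ and has a nondegenerate minimum at the origin; since $\lambda_0$ vanishes only at $0$ and its linearization there is $\operatorname{diag}(1,1,-1)$, the zero is transverse. Hence $\lambda_0$ is near-contact and is a contact form on $\mathbb{R}^3-\{0\}$. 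Next, fix radii $1>\rho_1>\rho_2>\cdots\downarrow 0$ and set $S_n=\{\rho_{n+1}\le|x|\le\rho_n\}$. On each $S_n$ the form $\lambda_0$ is contact, so some interior point has a Darboux ball $B_n\subset\operatorname{int}S_n$; inside it I would pick a small transverse unknot $K_n$ with its standard solid-torus neighbourhood $V_n$ and replace $\lambda_0$ on $V_n$ by a Lutz twist, i.e.\ add a correction $\gamma_n$ compactly supported in $V_n$. Taking the Lutz curve to wind positively keeps the modification contact with the original co-orientation, and it makes the contact structure, restricted to $V_n$, contain an overtwisted disc $D_n$. Setting $\lambda=\lambda_0+\sum_n\gamma_n$ with the $V_n$ pairwise disjoint, $\lambda$ is contact on $\mathbb{R}^3-\{0\}$ with the original co-orientation, so the associated $f$ is positive there; and any sufficiently small neighbourhood $U\ni 0$ contains $S_n$ for all large $n$, hence contains $D_n\subset U-\{0\}$.

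The step I expect to be the main obstacle is the behaviour at the origin: for $\lambda$ to qualify as a near-contact form it must extend smoothly across $0$ with a transverse zero and with $f$ still having a nondegenerate minimum there, and since the supports $V_n$ accumulate at $0$ this forces the corrections $\gamma_n$ to be controlled in every $C^k$-norm as $n\to\infty$ — a real tension, since a Lutz twist is never $C^1$-small. The mechanism is scale-matching: near any point at distance $\rho$ from $0$ the form $\lambda_0$ is, after rescaling, uniformly close to a fixed contact form, so each $V_n$, the Lutz twist on it, and the cutoff patching $\gamma_n$ back to $\lambda_0$ along $\partial V_n$ should be rescaled copies of one template adapted to the local scale $\sim\rho_n$; one then chooses the sequence $\rho_n$ and the template so that the $C^k$-norms of $\gamma_n$ stay under control. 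Carrying this out so that $\lambda$ is genuinely smooth at $0$, that $f$ keeps a nondegenerate minimum there, and that the discs $D_n$ survive, is the delicate heart of the argument; here one follows Taubes \cite{Taub-5}.
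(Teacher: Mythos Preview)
Your approach has a genuine gap at exactly the point you flag as delicate, and the scale-matching premise you invoke to close it is false. Under the dilation $\varphi_\rho(x)=\rho x$ one has $\rho^{-2}\varphi_\rho^{*}\lambda_0 = da + \rho\,\mu$ with $a=\tfrac12(x^2+y^2-z^2)$ and $\mu=z(y\,dx-x\,dy)$; as $\rho\to 0$ this converges to the \emph{exact} form $da$, not to any fixed contact form. So the contact geometry degenerates near $0$ and there is no uniform template to rescale. Concretely: a Lutz twist inside the shell $S_n$ must live in a solid torus of Euclidean thickness $\delta_n\le\rho_n$, and since the contact planes rotate through a full turn across that tube while $|\lambda_0|\sim\rho_n$ there, the correction obeys $|\gamma_n|\sim\rho_n$ and $|\nabla\gamma_n|\sim\rho_n/\delta_n\ge 1$. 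Hence $\sup_{S_n}|\nabla\gamma_n|$ does not tend to $0$, so $\sum_n\gamma_n$ is not even $C^1$ at the origin, let alone smooth with a transverse zero and a nondegenerate Hessian for $f$. No choice of radii with $\delta_n<\rho_n$ repairs this, and deferring to \cite{Taub-5} is not a proof of the specific mechanism you describe.

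The paper's argument sidesteps the smoothness problem entirely by making a \emph{single} polynomial modification and then locating the overtwisted discs on concentric spheres. Starting from any near-contact $\lambda$ with a zero at $0$, one adds $dC$ for a suitable homogeneous cubic $C$; since $d(dC)=0$ and $dC=O(r^2)$, the form $\lambda+dC$ is automatically smooth and near-contact with the same $2$-jet at $0$. After rescaling, $(\lambda+dC)|_{S_\varepsilon}$ is a small perturbation of $da|_{S_1}$, whose singular set consists of the two poles and a Morse--Bott equatorial circle; $C$ is chosen to break that circle, leaving exactly two singularities. A divergence computation using $\lambda\wedge d\lambda>0$ shows both are sinks of the characteristic vector field on the sphere, and Poincar\'e--Bendixson then forces a limit cycle---hence an overtwisted disc---on every sufficiently small $S_\varepsilon$.
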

\begin{proof}
Choose a flat metric 
\(g\) in \(\mathbb R^3\), let \(\nabla\) be its flat 
connection, and let \(\operatorname{vol}_g\) be its volume 
form.
\smallskip%

Fix any near-contact form \(\lambda\) 
defined in a neighbourhood of \(0\in\mathbb{R}^3\) such that \(\lambda\) has a zero at \(0\) and no other zeros. Assume that the orientations induced by \(\operatorname{vol}_{g}\) and \(\lambda\wedge d\lambda\) agree.
\smallskip%

Consider the sphere 
\(S_\varepsilon \subset \mathbb{R}^3\) of radius \(\varepsilon\). 
We shall show that there exists a homogeneous cubic 
polynomial \(C\) such that, 
for each small enough \(\varepsilon\), the \(1\)-form
\[
\lambda + dC,
\]
when restricted to \(S_\varepsilon\), 
contains an overtwisted disc. Since \(C\) is homogeneous cubic and
\[
(\lambda + dC)\wedge d(\lambda + dC)
=
\lambda\wedge d\lambda + dC\wedge d\lambda,
\]
the form \(\lambda + dC\) remains near-contact in a 
neighbourhood of the origin.
\smallskip%

To proceed, write \(\lambda\) as:
\[
\lambda = da + \mu,
\]
where \(a\colon\mathbb{R}^3\to\mathbb{R}\) is given by
\[
a(x) = \tfrac{1}{2}\langle Ax, x\rangle,\quad x\in\mathbb{R}^3,
\]
with \(A\) defined as in \eqref{form_A}. By changing the sign of \(\lambda\) if needed, we may assume that in some orthonormal frame the quadratic form \(a\) is given by
\[
a(x) = \tfrac{1}{2}(x_1^2 + x_2^2 - x_3^2).
\]
Define the dilation \(\varphi_\varepsilon\colon\mathbb{R}^3\to\mathbb{R}^3\) as:
\[
\varphi_\varepsilon(x) = \varepsilon x,\quad x\in\mathbb{R}^3.
\]
Consider the unit sphere \(S_1\subset\mathbb{R}^3\), and 
let \(V\) be a small neighbourhood of \(S_1\). 
Since both \(\mu\) and \(\nabla\mu\) vanish at the origin \(0\in\mathbb{R}^3\), there exists a constant \(D>0\), depending only on \(\mu\), such that for each \(\varepsilon\),
\begin{equation}\label{mu_eps_norm}
\|\varphi_\varepsilon^{*}\mu\|_{1,V} \le D\,\varepsilon^{3}.
\end{equation}
We compute:
\[
\varphi_{\varepsilon}^{*}(\lambda + dC)
=
\varepsilon^{2}\left(
da + \frac{\varphi_{\varepsilon}^{*}\mu}{\varepsilon^{2}}
+ \varepsilon\, dC
\right).
\]
In particular, the 
\(\varepsilon^{-2}\,\varphi_{\varepsilon}^{*}(\lambda + dC)\) 
is an \(\varepsilon\)-small \(C^{1}\)-perturbation of \(da\). 
We now choose this perturbation in a controlled way. 
\smallskip%

Restrict \(a\) to the unit sphere \(S_{1}\). 
The critical set of \(a|_{S_{1}}\) is as follows:
\begin{enumerate}
\item The two points \(x_{3} = \pm 1\), which are nondegenerate critical points.
\smallskip%

\item The circle \(x_{3} = 0\), which consists entirely of critical points. This circle is Morse-Bott for \(a|_{S_{1}}\). Indeed:
\[
\nabla_{x_{3}} da = -dx_{3}.
\]
\end{enumerate}
\smallskip%

Choose \(C\) so that the \(2\)-form
\[
dC \wedge \nabla_{x_3} da = -\,dC \wedge dx_3,
\]
when restricted to the unit sphere \(S_{1}\), 
does not vanish at the points of the big circle \(x_3 = 0\). It suffices to take \(C = C(x_1,x_2)\), independent of \(x_3\), and arranged so that \(dC \neq 0\) unless \(x_1 = x_2 = 0\).
\smallskip%

Since
\[
\varphi_{\varepsilon}^{*}(\mu + dC)\wedge 
\nabla_{x_3}\varphi_{\varepsilon}^{*} da
=
\varepsilon^{2}\,\varphi_{\varepsilon}^{*}\mu \wedge \nabla_{x_3} da
\;+\;
\varepsilon^{5}\, dC \wedge \nabla_{x_3} da,
\]
it follows from \eqref{mu_eps_norm} that, after multiplying \(C\) by a sufficiently large constant if needed, the \(2\)-form
\[
\varphi_{\varepsilon}^{*}(\mu + dC)\wedge 
\nabla_{x_3}\varphi_{\varepsilon}^{*} da,
\]
when restricted to \(S_{1}\), does not vanish at any point of the circle \(x_3 = 0\).
\smallskip%

With this choice of \(C\), and for \(\varepsilon\) small enough, the \(1\)-form
\[
\varepsilon^{-2}\,\varphi_{\varepsilon}^{*}(\lambda + dC)
=
da \;+\; \frac{\varphi_{\varepsilon}^{*}\mu}{\varepsilon^{2}} \;+\; \varepsilon\, dC,
\]
when restricted to \(S_{1}\), has exactly two transverse zeros \(p_{+}\) and \(p_{-}\). These are small perturbations of the points \(x_3 = 1\) and \(x_3 = -1\), respectively.
\smallskip%

Set:
\[
\lambda_{\varepsilon}
=
da
+ \frac{\varphi_{\varepsilon}^{*}\mu}{\varepsilon^{2}}
+ \varepsilon\, dC.
\]
We will show that for each sufficiently small but nonzero \(\varepsilon\), the restriction \(\lambda_{\varepsilon}|_{S_{1}}\) admits an overtwisted disc. Consider the Euler field:
\[
E = x_{1}\partial_{x_{1}} + x_{2}\partial_{x_{2}} + x_{3}\partial_{x_{3}},
\]
and define \(\omega\) by \(\iota_{E}\operatorname{vol}_{g} = \omega\). Its restriction to \(S_{1}\) is nondegenerate. 
Restrict \(\omega\) and 
\(\lambda_{\varepsilon} \) to \(S_{1}\), and let \(X_{\varepsilon}\) be the \(\omega\)-dual of \(\lambda_{\varepsilon}|_{S_{1}}\); 
that is, \(\iota_X \omega = \lambda_{\varepsilon}|_{S_{1}}\). 
Since \(\lambda_{\varepsilon}\) is \(C^{1}\)-close to \(da\), the vector field \(X_{\varepsilon}\) is \(C^{1}\)-close to \(X_{0}\).
\smallskip%

\(X_{\varepsilon}\) has exactly two zeros \(p_{+}\) and \(p_{-}\) on \(S_{1}\), both transverse. 
Because \(a\) has nondegenerate minima at \(x_{3} = \pm 1\), the linearization of \(X_{0}\) at these points has a pair of imaginary eigenvalues; therefore, the linearization of \(X_{\varepsilon}\) at \(p_{\pm}\) cannot have purely real eigenvalues, so neither \(p_{+}\) nor \(p_{-}\) is a saddle. We will show that:
\begin{equation}\label{divX}
\operatorname{div}X_{\varepsilon}(p_{+})<0,
\qquad
\operatorname{div}X_{\varepsilon}(p_{-})<0
\quad
\text{for all small enough but nonzero \(\varepsilon\).}
\end{equation}
Let \(f_{\varepsilon}\) be defined by:
\[
\lambda_{\varepsilon}\wedge d\lambda_{\varepsilon}
=
f_{\varepsilon}\,\operatorname{vol}_{g}.
\]
We compute:
\[
f_{\varepsilon}\,\omega
=
f_{\varepsilon}\, \iota_{E}\operatorname{vol}_{g}
=
\iota_{E}(\lambda_{\varepsilon}\wedge d\lambda_{\varepsilon})
=
\lambda_{\varepsilon}(E)\, d\lambda_{\varepsilon}
- \lambda_{\varepsilon}\wedge\iota_{E} d\lambda_{\varepsilon}.
\]
Restricting to \(S_{1}\):
\[
f_{\varepsilon}\,\omega
=
\lambda_{\varepsilon}(E)\,(\operatorname{div}X)\,\omega
- \lambda_{\varepsilon}\wedge\iota_{E} d\lambda_{\varepsilon}.
\]
Restricting to \(p_{+}\) 
and \(p_{-}\) (where \(\lambda_{\varepsilon} = 0\)):
\[
f_{\varepsilon}(p_{+})
=
\lambda_{\varepsilon}(E)(p_{+})\,\operatorname{div}X_{\varepsilon}(p_{+}),
\qquad
f_{\varepsilon}(p_{-})
=
\lambda_{\varepsilon}(E)(p_{-})\,\operatorname{div}X_{\varepsilon}(p_{-}).
\]
Since \(f_{\varepsilon}>0\) everywhere on \(S_{1}\) and
\[
\lambda_{\varepsilon}(E)(p_{+}) \to -1,
\qquad
\lambda_{\varepsilon}(E)(p_{-}) \to -1
\quad\text{as }\varepsilon \to 0,
\]
we obtain \eqref{divX}. Consequently, 
both \(p_{+}\) and \(p_{-}\) are sinks of \(X_{\varepsilon}\), and \(X_{\varepsilon}\) has no other zeros on \(S_{1}\). Hence, \(X_{\varepsilon}\) must admit a limiting cycle. This completes the proof. \qed
\end{proof}
\smallskip%

Recall the following theorem of Eliashberg (Theorem 3.1.1 in \cite{Eliash}).
\begin{theorem*}[\cite{Eliash}]
Let \(M\) be a compact \(3\)-manifold 
and let \(A\subset M\) be a closed subset 
such that \(M - A\) is connected. Let \(K\) be a compact space 
and \(L\subset K\) a closed subspace. 
Let \(\{\xi_{t}\}_{t\in K}\) be a family of 
\(2\)-plane distributions which are contact for all \(t\in L\) and are contact near \(A\) for all \(t\in K\). 
Assume there exists an embedded disc \(\Delta \subset M - A\) 
such that \(\xi_{t}\) is contact near \(\Delta\) and \((\Delta,\xi_{t})\) is an overtwisted disc for all \(t\in K\). 
Then there exists a family of contact structures \(\{\xi'_{t}\}_{t\in K}\) on \(M\) such that \(\xi'_{t}\) agrees with \(\xi_{t}\) near \(A\) for all \(t\in K\), and agrees with \(\xi_{t}\) everywhere for all \(t\in L\). 
\end{theorem*}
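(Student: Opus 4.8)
The plan is to prove this by the h-principle method: triangulate $M$, reduce to a cell-by-cell extension problem, and show that the fixed overtwisted disc $\Delta$ removes the sole obstruction, all carried out parametrically over $K$ rel $L$ and rel $A$. First I would fix a smooth triangulation of $M$ for which $A$ is a subcomplex and which is fine enough that $\Delta$ lies in the interior of a single $3$-simplex $\sigma_0 \subset M - A$. Over the $2$-skeleton the family $\{\xi_t\}$ can be made genuinely contact by an arbitrarily $C^0$-small homotopy, keeping $\xi_t$ untouched near $A$ and for $t \in L$: contactness is an open differential relation, near a point this is Darboux, and along edges and over triangles it amounts to making the relevant arcs Legendrian or transverse and the relevant surfaces convex; the parametric extension over the $2$-skeleton then presents no obstruction by a standard general-position argument together with the flexibility of contact germs near points, curves, and surfaces. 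So after this step one may assume each $\xi_t$ is already contact near the $2$-skeleton.

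The content is the extension over the $3$-simplices. For a $3$-simplex $\sigma$ one is given, for each $t$, a germ of genuine contact structure on $\partial\sigma \cong S^2$ together with a formal (almost contact) extension over $\sigma$, and one must homotope the latter, rel $\partial\sigma$, to a genuine contact structure. In general this is \emph{obstructed} --- a tight ball with the prescribed convex boundary need not be homotopic rel boundary to the given formal data --- and this rigidity is exactly what the hypothesis is designed to break. Using that $M - A$ is connected, isotope a standard Eliashberg overtwisted ball $\mathcal{O}$ (a normal-form neighbourhood of $\Delta$) into the interior of $\sigma$, and appeal to Eliashberg's filling lemma: a contact shell on $S^2 \times [0,1]$ that contains an overtwisted ball admits a genuine contact filling, rel boundary, in any prescribed formal homotopy class. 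The inputs to this lemma are (i) the classification of contact germs near a convex $S^2$ and of tight fillings of the ball (convex surface theory, Eliashberg's uniqueness for the tight ball), and (ii) an explicit model computation --- a Lutz-type twisting supported in $\mathcal{O}$ --- showing that the presence of $\mathcal{O}$ annihilates the filling obstruction. I expect this filling lemma to be the \emph{main obstacle}: it is the technically hard heart of \cite{Eliash}, and it is precisely the step with no analogue in the tight world.

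Finally, the parametric and relative structure is bookkeeping once the filling lemma is available in families. One inducts over the skeleton as above, and then over the $3$-simplices one at a time; each elementary operation --- the $C^0$-small homotopy to contact over low cells, the isotopy of $\mathcal{O}$ into $\sigma$, the application of the filling lemma --- is performed continuously in $t \in K$, with the rel-$A$ and rel-$L$ constraints carried along as the ``already solved'' part of the induction (over $A$ and over $L$ nothing is ever modified). Because contactness and the matching of structures are germ conditions and every modification is supported in the interior of a cell, the locally constructed contact structures patch along shared faces without further adjustment, producing the family $\{\xi'_t\}_{t \in K}$ with $\xi'_t = \xi_t$ near $A$ for all $t$ and $\xi'_t = \xi_t$ for all $t \in L$. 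A concluding small homotopy places $\xi'_t$ in the originally prescribed formal class, which completes the proof.
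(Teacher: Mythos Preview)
The paper does not prove this statement. It is recalled verbatim as a theorem of Eliashberg (Theorem~3.1.1 in \cite{Eliash}) and then immediately invoked as a black box in the proof of Proposition~\ref{contact_classification}. There is therefore no ``paper's own proof'' to compare your proposal against.

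That said, your outline is a fair high-level sketch of Eliashberg's original argument: triangulate, make the family contact over the $2$-skeleton by local flexibility, and then use the overtwisted disc to kill the filling obstruction over each $3$-cell, all carried out parametrically rel $L$ and rel $A$. You correctly identify the filling lemma as the technical heart. One point to be careful about: in the genuine proof the overtwisted disc must be available \emph{for every} $3$-simplex where the obstruction can occur, not just for one simplex $\sigma_0$; Eliashberg handles this by connecting each such cell to a neighbourhood of $\Delta$ inside $M-A$ (using connectedness of $M-A$) and dragging in a copy of the overtwisted model. Your phrase ``isotope a standard Eliashberg overtwisted ball $\mathcal{O}$ \ldots\ into the interior of $\sigma$'' gestures at this, but the mechanism by which a single fixed $\Delta$ supplies an overtwisted ball to every problematic simplex, simultaneously and continuously in $t$, deserves a sentence more. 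Otherwise the sketch is on the right track, but the paper under review simply cites the result and moves on.
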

\smallskip%

\begin{proposition}\label{contact_classification}
Let \(Y\) be a closed \(3\)-manifold, and let \(\lambda_{0}\) and \(\lambda_{1}\) be near-contact forms on \(Y\) 
with the same zero set:
\[
\lambda_{0}^{-1}(0)=\lambda_{1}^{-1}(0)
=
\{p_{1},\ldots,p_{k}\},\quad k>0.
\]
Suppose \(\lambda_{0}\) and \(\lambda_{1}\) are 
homotopic as nonzero \(1\)-forms on \(Y - \{p_{1},\ldots,p_{k}\}\), and that \(\lambda_{0}\wedge d\lambda_{0}\) and \(\lambda_{1}\wedge d\lambda_{1}\) induce the same orientation on \(Y\). Then there exists a near-contact homotopy \(\lambda_{t}\), \(t\in[0,1]\), between \(\lambda_{0}\) and \(\lambda_{1}\) on the whole of \(Y\).
\end{proposition}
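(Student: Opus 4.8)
The plan is to deduce the statement from Eliashberg's $h$-principle (the theorem of \cite{Eliash} recalled above), after first putting $\lambda_0$ and $\lambda_1$ into a convenient normal form near their common zero set. Two preliminary observations: by the lemma following Lemma~\ref{lambda_closed} the symmetric form $A$ of \eqref{form_A} attached to a zero of a near-contact form is never definite, so $\operatorname{ind}_{p_j}(\lambda_0)=\operatorname{ind}_{p_j}(\lambda_1)=-1$ for every $j$; and the hypothesis that $\lambda_0\wedge d\lambda_0$ and $\lambda_1\wedge d\lambda_1$ induce the same orientation on $Y$ is in particular local near each $p_j$. Now fix, for each $j$, a copy $\mu_j$ near $p_j$ of the Taubes model of Lemma~\ref{overtwisted} — a near-contact germ with a single zero at $p_j$, every punctured neighbourhood of which contains an overtwisted disc for $\ker\mu_j$ — with its sign chosen so that $\mu_j\wedge d\mu_j$ induces the same orientation as $\lambda_0\wedge d\lambda_0$ near $p_j$. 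Since the indices and orientations match, Lemma~\ref{local_lambda} provides near-contact homotopies, each supported in a small neighbourhood of a single $p_j$, carrying $\lambda_0$ and $\lambda_1$ to forms (still denoted $\lambda_0,\lambda_1$) that both equal $\mu_j$ on a neighbourhood $\mathcal N_j$ of $p_j$. These homotopies pass through forms whose only zeros are the $p_j$, so $\lambda_0$ and $\lambda_1$ remain homotopic as nowhere-vanishing $1$-forms on $Y^\circ:=Y-\{p_1,\dots,p_k\}$, and it remains to connect these normalized forms by a near-contact homotopy.

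Next I upgrade the homotopy of nowhere-vanishing $1$-forms to one that is \emph{constant near the zeros}. Writing $\mathcal W\subset\mathcal N:=\bigsqcup_j\mathcal N_j$ for a slightly smaller neighbourhood of the zero set, the obstruction to homotoping the given homotopy, rel endpoints, into one that is constant on $\mathcal W$ lies in $\pi_1$ of the space of nowhere-vanishing $1$-forms on $\mathcal W$ (a disjoint union of punctured balls), i.e.\ in $\prod_j\pi_1\!\big(\operatorname{Map}_{\pm1}(S^2,S^2)\big)\cong(\mathbb Z/2)^k$. Each $\mathbb Z/2$ is, however, realized by pulling $\mu_j$ back along the nontrivial loop in $SO(3)$ acting on the model ball — a loop of near-contact germs based at $\mu_j$ — so composing the given homotopy with the corresponding near-contact loops (supported near the relevant $p_j$) kills the obstruction. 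Hence we may assume $\lambda_0$ and $\lambda_1$ are joined by a homotopy $\lambda_t$, $t\in[0,1]$, of nowhere-vanishing $1$-forms on $Y^\circ$ with $\lambda_t\equiv\mu_j$ on $\mathcal W$ for all $t$.

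Now delete small balls $B_j\ni p_j$ with $\overline{B_j}\subset\mathcal W$ to obtain a compact $3$-manifold $Y'$ with boundary; on $Y'$ the forms $\lambda_0,\lambda_1$ are genuine contact forms equal to $\mu_j$ near $\partial Y'$. Taking cooriented kernels of $\lambda_t|_{Y'}$ gives a family of $2$-plane fields $\xi_t$ on $Y'$ with $\xi_0=\ker\lambda_0$, $\xi_1=\ker\lambda_1$, which equal $\ker\mu_j$ — hence are contact — on a neighbourhood of $\partial Y'$ and on a neighbourhood of an embedded disc $\Delta_j\subset(\mathcal W\cap Y')$, chosen via Lemma~\ref{overtwisted} to be overtwisted for $\ker\mu_j$ and to lie in the complement of that neighbourhood of $\partial Y'$. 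Applying Eliashberg's theorem with $M=Y'$ (we may assume $Y$, hence $M$ and $M-A$, connected), $A$ a neighbourhood of $\partial Y'$, $K=[0,1]$, $L=\{0,1\}$, and the overtwisted disc $\Delta_1$, we obtain a family of contact structures $\xi'_t$ on $Y'$ with $\xi'_0=\ker\lambda_0$, $\xi'_1=\ker\lambda_1$, and $\xi'_t=\ker\mu_j$ near $\partial Y'$ for every $t$. Finally choose contact forms $\lambda'_t$ for the $\xi'_t$ depending smoothly on $t$ with $\lambda'_0=\lambda_0$, $\lambda'_1=\lambda_1$ (the contact forms of a fixed cooriented plane field form a contractible set), rescale by a positive function so that $\lambda'_t=\mu_j$ near $\partial Y'$ for all $t$, and extend $\lambda'_t$ over the deleted balls by $\mu_j$. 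The result is, for each $t$, a near-contact form on all of $Y$ with zero set exactly $\{p_1,\dots,p_k\}$ (it is contact on $Y'$, equals $\mu_j$ near $p_j$, and $\lambda'_t\wedge d\lambda'_t$ has constant positive sign, being positive at $t=0$); its endpoints are the normalized $\lambda_0,\lambda_1$, and concatenating with the two near-contact homotopies from the first step yields the required homotopy between the original forms.

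I expect the main obstacle to be the interface with Eliashberg's $h$-principle in the middle two steps: the homotopy of plane fields must be arranged to be locally constant near both $\partial Y'$ and an overtwisted disc, for all $t$ simultaneously, so that the parametric $h$-principle applies relative to $L=\{0,1\}$. This is precisely why $\lambda_0,\lambda_1$ are pushed to the Taubes model of Lemma~\ref{overtwisted} near the zeros (which makes overtwisted discs available, for all $t$ at once, in every punctured neighbourhood of each $p_j$) and why the $\pi_1(SO(3))=\mathbb Z/2$ correction is needed to make the $1$-form homotopy constant near the zeros. The remaining ingredients — the nested-neighbourhood bookkeeping, the fibration/obstruction argument, the passage between cooriented contact structures and contact forms, and the final gluing — are routine.
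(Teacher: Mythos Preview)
Your approach is the same as the paper's: normalize both forms to coincide with a Taubes overtwisted model near each zero via Lemmas~\ref{local_lambda} and~\ref{overtwisted}, produce a homotopy of nowhere-vanishing $1$-forms on $Y-\{p_1,\dots,p_k\}$ that is constant on neighbourhoods of the $p_j$, and then invoke Eliashberg's theorem on the complement of small closed balls around the zeros. The paper is much terser---it simply asserts the existence of the family $\alpha_t$ and stops once Eliashberg applies---whereas you supply the $\pi_1$ obstruction argument and the passage from cooriented plane fields back to contact $1$-forms, both of which the paper leaves implicit.

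One genuine slip: your claim that $\operatorname{ind}_{p_j}(\lambda)=-1$ for \emph{every} zero of a near-contact form is false. The lemma after Lemma~\ref{lambda_closed} says only that the symmetric form $A$ of \eqref{form_A} is indefinite, so in dimension~$3$ its signature is $(2,1)$ or $(1,2)$, giving $\operatorname{sgn}\det A=-1$ or $+1$ respectively; replacing $\lambda$ by $-\lambda$ interchanges these and both are near-contact. What your argument actually needs is only that $\operatorname{ind}_{p_j}(\lambda_0)=\operatorname{ind}_{p_j}(\lambda_1)$, and this follows (as the paper observes) from the hypothesis that $\lambda_0,\lambda_1$ are homotopic as nonvanishing $1$-forms on the punctured manifold: the index is the degree on a small linking $2$-sphere. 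You must then choose each Taubes model $\mu_j$ with the matching index, which Lemma~\ref{overtwisted} allows since its construction begins from an arbitrary near-contact germ. With this correction (and a corresponding adjustment of your ``$\operatorname{Map}_{\pm1}(S^2,S^2)$'' to whichever degree is relevant at each $p_j$), the rest of your argument goes through.
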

\begin{proof}
Since \(\lambda_{0}\) and \(\lambda_{1}\) are homotopic as nonvanishing \(1\)-forms on \(Y - \{p_{1},\ldots,p_{k}\}\), we have for each zero \(p_{i}\):
\[
\operatorname{ind}_{p_{i}}(\lambda_{0})
=
\operatorname{ind}_{p_{i}}(\lambda_{1}).
\]
Using Lemmas \ref{overtwisted} and \ref{local_lambda}, we may assume that there exist neighbourhoods \(U_{i}\ni p_{i}\) such that  
\(\lambda_{0}|_{U_{i}}=\lambda_{1}|_{U_{i}}\), and that \((U_{i},\lambda_{0})\) contains an overtwisted disc. 
Choose closed neighbourhoods \(A_{i}\subset U_{i}\) of \(p_{i}\) such that \(U_{i} - A_{i}\) still contains an overtwisted disc. 
Set \(A=A_{1}\cup\cdots\cup A_{k}\); shrinking the \(A_{i}\) if necessary, we may assume \(Y -  A\) is connected. We can now construct a family of \(1\)-forms \(\alpha_{t}\), \(t\in[0,1]\), such that:
\begin{enumerate}
\item \(\alpha_{0}=\lambda_{0}\), \(\alpha_{1}=\lambda_{1}\), and  
\(\alpha_{t}=\lambda_{0}=\lambda_{1}\) on each \(U_{i}\);
\smallskip
\item for all \(t\), the form \(\alpha_{t}\) is nowhere zero 
on \(Y - \{p_{1},\ldots,p_{k}\}\).
\end{enumerate}
\smallskip%

The lemma now follows from Eliashberg's theorem. \qed
\end{proof}

\section{Near-symplectic forms}\label{near-symplectic-section}
In this section we review the 
notion of a near-symplectic form; see 
\cite{A-D-K, Taub-5, Perutz, Lekili, Gay-Kirby} for background. Our goal is to prove Proposition \ref{near-ball}.
\smallskip%

Let \(X\) be a \(4\)-manifold. 
A \(2\)-form \(\omega\) on \(X\) is called 
a \emph{near-symplectic form} if:
\begin{enumerate}
\item \(d\omega = 0\);
\smallskip

\item \(\omega\) is symplectic away 
from its zero set, i.e., for each 
\(x\in X\) with \(\omega_x \neq 0\),
\(\omega_x\wedge\omega_x \neq 0\);
\smallskip

\item the zero set 
\(Z(\omega)=\{x\in X \mid \omega_x=0\}\) 
is a smooth \(1\)-dimensional submanifold (a union of embedded circles and arcs), and for each \(x\in Z\) 
and \(\xi \in T_X|_x\),
\[
(\nabla_\xi\omega)_x\wedge(\nabla_\xi\omega)_x\neq 0.
\]
If \(X\) has boundary, \(Z\) is additionally required to be transverse to \(\partial X\).
\end{enumerate}
\smallskip%

This definition is equivalent to the following. 
Let \(\omega\) be a closed \(2\)-form with zero set \(Z=\omega^{-1}(0)\). Then \(\omega\) is near-symplectic iff \(Z\) is a smooth \(1\)-manifold and there exists a volume form \(\operatorname{vol}\) on \(X\) such that:
\[
\omega\wedge\omega=f\,\mathrm{vol},
\]
where \(f>0\) on \(X-Z\) and \(Z\) is a Morse-Bott critical submanifold of \(f\); that is, \(df=0\) along \(Z\) and the Hessian of \(f\) is nondegenerate on the normal bundle of \(Z\).
\smallskip%

Endow \(X\) with a Riemannian metric and let \(\omega\) be a closed SD form. Viewing \(\omega\) as a section of \(\Lambda^2_+T^{*}_X\), assume it is transverse to the zero section; that is, for each \(x\in Z = \omega^{-1}(0)\), 
\(\nabla \omega_x\) has rank \(3\). Then \(Z\) is a smooth \(1\)-manifold, and \(\omega\) is necessarily near-symplectic: for each \(x\in Z\) and \(\xi\in T_X|_x\), the SD form \(\nabla_\xi\omega_x\) is nonzero; hence, its wedge 
square is also nonzero.
\smallskip%

Auroux, Donaldson, and Katzarkov proved a converse 
(Proposition 1 in \cite{A-D-K}):
\begin{theorem*}[\cite{A-D-K}]
Suppose \(\omega\) is a near-symplectic form on \(X\). 
Then there is a Riemannian metric \(g\) on \(X\) such that 
\(\omega\) is self-dual with respect to \(g\).
\end{theorem*}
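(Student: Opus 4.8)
The plan is to build $g$ separately on the complement of the zero set $Z=Z(\omega)$ and on a tubular neighbourhood of $Z$, and then glue the two; away from $Z$ the construction is entirely soft, while the one genuine input is a local normal form for $\omega$ along $Z$, for which condition~(3) is precisely what is needed. First I would fix once and for all the orientation of $X$ given by a volume form $\mathrm{vol}$ as in the Morse--Bott characterisation, so that $\omega\wedge\omega\ge 0$. At a point $x$ with $\omega_x\neq 0$ the form $\omega_x$ is nondegenerate on $T_xX$, and for any $\omega_x$-compatible complex structure $J$ the K\"ahler metric $\omega_x(\cdot,J\cdot)$ makes $\omega_x$ self-dual; conversely any inner product for which $\omega_x$ is self-dual is, after a conformal rescaling, of this form. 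Thus the set $\mathcal R(\omega_x)$ of inner products on $T_xX$ inducing the fixed orientation and making $\omega_x$ self-dual is nonempty, and through a Darboux identification it is the product of $\mathbb R_{>0}$ with the contractible space of compatible complex structures on a symplectic vector space, hence contractible and smoothly varying in $\omega_x$. Consequently $\{\mathcal R(\omega_x)\}_{x\in X\setminus Z}$ is a fibre bundle with contractible fibre over the open manifold $X\setminus Z$, so it admits a global section: a Riemannian metric $g_{\mathrm{out}}$ on $X\setminus Z$ with respect to which $\omega$ is self-dual.

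The difficulty is to extend a metric smoothly across $Z$. Here softness fails, because the family $\{\mathcal R(\omega_x)\}$ degenerates along $Z$ -- at a point of $Z$ every conformal class works, so the fibre jumps to an open set -- and the obvious candidates, such as the conformal class spanned pointwise by $\omega$ and its orthogonal complement, do not extend smoothly over $Z$; one must understand the exact way $\omega$ vanishes there. This is where condition~(3) is used: it is exactly the hypothesis under which $\omega$ has a normal form near $Z$. Along a component of $Z$ I would use coordinates $(t,x_1,x_2,x_3)$ with $Z=\{x=0\}$ and a diffeomorphism $\Phi$ carrying $\omega$ to a model $\omega_0=\sum_{i,j}Q_{ij}\,x_j\,e^i$, linear in the normal directions, where the $e^i$ are the standard self-dual $2$-forms of the flat metric and $Q$ is symmetric and traceless (equivalently $d\omega_0=0$) and nondegenerate (equivalently $f$ has nondegenerate normal Hessian along $Z$); in the twisted case $\omega_0$ takes this form after passing to the double cover of the circle, and the flat metric, being invariant under the orthogonal deck transformation, still descends. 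Since $\omega_0$ is manifestly self-dual for the flat metric $g_{\mathrm{euc}}$, the pulled-back metric $g_{\mathrm{in}}:=\Phi^{*}g_{\mathrm{euc}}$ is a genuine smooth metric on a tubular neighbourhood $V$ of $Z$, defined also along $Z$ itself, for which $\omega$ is self-dual.

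It remains to glue. On $V\setminus Z$ the form $\omega$ is symplectic, so $g_{\mathrm{in}}$ and $g_{\mathrm{out}}$ restrict to two sections of the contractible-fibre bundle $\{\mathcal R(\omega_x)\}$ over $V\setminus Z$ and hence are homotopic through such sections; choose a smooth homotopy $g_s$, $s\in[0,1]$, between them. Let $\rho\colon V\to[0,1]$ be a smooth function of the distance to $Z$ alone, equal to $0$ near $Z$ and to $1$ near the outer boundary of $V$, and set $g=g_{\rho(x)}$ on $V\setminus Z$, $g=g_{\mathrm{in}}$ on $Z$, and $g=g_{\mathrm{out}}$ on $X\setminus V$. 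The three prescriptions agree on overlaps (near $Z$ one has $\rho\equiv 0$, so $g_{\rho(x)}=g_{\mathrm{in}}$, which is smooth across $Z$; near $\partial V$ one has $\rho\equiv 1$), so $g$ is a smooth metric on $X$; and at every point of $X\setminus Z$ it lies in $\mathcal R(\omega_x)$, while on $Z$ the form $\omega$ vanishes, so $\omega$ is self-dual with respect to $g$ everywhere. The main obstacle in this whole scheme is the normal form of the previous paragraph -- proved by a Moser-type argument exploiting the nondegeneracy of the normal Hessian of $f$ along $Z$; granting it, the rest is formal.
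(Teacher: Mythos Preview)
The paper does not prove this statement; it is quoted as Proposition~1 of \cite{A-D-K} and used as a black box. There is therefore no in-paper proof to compare against.

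Your argument is correct and follows the standard route. The contractible-fibre argument on $X\setminus Z$ and the homotopy-gluing across a collar of $Z$ are clean and need no comment. The only substantive input, as you rightly isolate, is the linear normal form for $\omega$ near $Z$: in suitable coordinates $(t,x_1,x_2,x_3)$ with $Z=\{x=0\}$, one has $\omega=\sum_{i,j}Q_{ij}x_j\,e^i$ with $Q$ symmetric, traceless, and nondegenerate. Your identification of these conditions with $d\omega_0=0$ and the Morse--Bott nondegeneracy of $f$ is accurate (and note that $d\omega_0=0$ already forces $Q$ to be $t$-independent, so the model is genuinely flat). This normal form is exactly what is established in \cite{A-D-K} (and, in equivalent formulations, by Honda and Perutz \cite{Perutz}); there are two models, distinguished by the sign of $\det Q$, and along a circle component one may additionally need to pass to the orientation double cover of the normal bundle, as you say. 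Granting the normal form, the rest of your proof goes through verbatim.
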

\smallskip%

A smooth hypersurface \(\Sigma\subset X\) 
is called \(\omega\)-convex if there exists a vector field \(V\) defined near \(\Sigma\) such that
\(L_V\omega=\omega\), \(V\) is transverse to \(\Sigma\), and \(V\) points outward along \(\Sigma\) from the 
compact region it bounds. Such a vector field \(V\) is called an \(\omega\)-Liouville vector field. An \(\omega\)-convex surface \(\Sigma\) is necessarily transverse to \(Z\), and the primitive \(\lambda=\iota_V\omega\) restricts to a near-contact form on \(\Sigma\). The same applies to \(\omega\)-concave surfaces, obtained by replacing \(L_V\omega=\omega\) with \(L_V\omega=-\omega\).
\smallskip%

\begin{lemma}[Near-symplectic cobordism]\label{cobordism}
Let \(\lambda_t\), \(t\in[0,1]\), be a smooth 
family of near-contact forms on a closed 
\(3\)-manifold \(Y\). Then there exists a near-symplectic form \(\omega\) on \(W=[0,1]\times Y\) and \(\omega\)-Liouville vector fields \(V_0\) and \(V_1\), defined near \(\{0\}\times Y\) and \(\{1\}\times Y\), respectively, such that \(V_0\) is transverse to \(\{0\}\times Y\) and points into \(W\), \(V_1\) is transverse to \(\{1\}\times Y\) and points out of \(W\), and
\[
\iota_{V_0}\omega\big|_{\{0\}\times Y}=c_1\lambda_0,
\qquad
\iota_{V_1}\omega\big|_{\{1\}\times Y}=c_2\lambda_1,
\]
for some constants \(c_1,c_2>0\). In particular, \(\{0\}\times Y\) is \(\omega\)-concave and \(\{1\}\times Y\) is \(\omega\)-convex.
\end{lemma}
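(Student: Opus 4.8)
The plan is to realize $\omega$ as the ``symplectization of the path'' $\lambda_t$, in direct analogy with the contact case: one expects $\omega = d\!\left(e^{Kt}\lambda_t\right)$ on $W=[0,1]\times Y$ to work once the constant $K>0$ is taken large enough, where $\lambda_t$ is viewed as a ($dt$-free) $1$-form on $W$ restricting to the given near-contact form on each slice $\{t\}\times Y$. First I would make two normalizations. Reparametrizing the path — precompose $t\mapsto\lambda_t$ with a smooth surjection $[0,1]\to[0,1]$ that is constant near both endpoints — we may assume $\lambda_t\equiv\lambda_0$ for $t$ near $0$ and $\lambda_t\equiv\lambda_1$ for $t$ near $1$; this is again a smooth path of near-contact forms. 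The second, more essential, normalization is to arrange that the zero loci do not move with $t$.

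For that reduction: since each $\lambda_t$ has transverse zeros and $Y$ is closed, the number $k$ of zeros is constant and the zeros trace out $k$ disjoint embedded arcs $t\mapsto p_i(t)$ in $W$. By the parametrized isotopy extension theorem there is a smooth family $\phi_t\in\mathrm{Diff}(Y)$ with $\phi_0=\mathrm{id}$ and $\phi_t(p_i(0))=p_i(t)$. Replacing $\lambda_t$ by $\phi_t^{*}\lambda_t$ — still a smooth path of near-contact forms, now with $\lambda_t^{-1}(0)=\{p_1,\dots,p_k\}$ independent of $t$, where $p_i:=p_i(0)$ — we may assume the zeros are fixed. Once $\omega,V_0,V_1$ are produced for this new path, pushing them forward by the diffeomorphism $\Phi(t,y)=(t,\phi_t(y))$ of $W$ recovers the statement for the original path: near-symplecticity and the Liouville conditions are diffeomorphism-invariant, and on the boundary slices $\iota_{\Phi_*V_j}(\Phi_*\omega)|_{\{j\}\times Y}=(\phi_j)_*\phi_j^{*}(c_j\lambda_j)=c_j\lambda_j$ for $j=0,1$ because $(\phi_j)_*\phi_j^{*}=\mathrm{id}$.

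So assume the zeros are fixed and set $\omega=d\!\left(e^{Kt}\lambda_t\right)$; it is closed. Writing $\lambda_t\wedge d_Y\lambda_t=f_t\,\mathrm{vol}_Y$ and $\partial_t\lambda_t\wedge d_Y\lambda_t=g_t\,\mathrm{vol}_Y$ for a fixed volume form $\mathrm{vol}_Y$ (here $d_Y$ is the fibrewise exterior derivative), expanding the square and discarding terms with $dt$ twice and the fibrewise $4$-form $(d_Y\lambda_t)^2$ gives
\[
\omega\wedge\omega = 2\,e^{2Kt}\bigl(Kf_t+g_t\bigr)\,dt\wedge\mathrm{vol}_Y .
\]
Away from any tubular neighbourhood of the arcs $[0,1]\times\{p_i\}$ one has $f_t\ge c_0>0$ and $|g_t|\le C_0$ uniformly, so $Kf_t+g_t>0$ there for $K$ large. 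Near an arc, in fixed normal coordinates $x$ on $Y$ about $p_i$: by Lemma~\ref{lambda_closed} $d\lambda_t$ vanishes at $p_i$, hence $d_Y\lambda_t=O(|x|)$; and $\partial_t\lambda_t(p_i)=\partial_t\bigl(\lambda_t(p_i)\bigr)=0$ since the zero is now \emph{fixed}, hence $\partial_t\lambda_t=O(|x|)$ as well; therefore $g_t=O(|x|^2)$. On the other hand $p_i$ is a nondegenerate minimum of $f_t$, so $f_t=Q_t(x)+O(|x|^3)$ with $Q_t$ positive definite, and by compactness of $[0,1]$ the least eigenvalue of $Q_t$ is bounded below uniformly in $t$ (as are the Taylor remainders). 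Thus for $K$ large $Kf_t+g_t\ge 0$ near the arc, vanishing exactly on it, with positive-definite normal Hessian; i.e.\ the arcs are Morse--Bott minima of $f:=2e^{2Kt}(Kf_t+g_t)$. Hence $Z(\omega)=\bigcup_i[0,1]\times\{p_i\}$ is a smooth $1$-manifold, transverse to $\partial W$ (the arcs are vertical), $f>0$ off $Z(\omega)$, and $\omega$ is near-symplectic.

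Finally the Liouville fields. Near $t=0$ the path is constant, so $\omega=d\!\left(e^{Kt}\lambda_0\right)$ is the symplectization of $\lambda_0$ (near-symplectic there with zero set $[0,\varepsilon]\times\{p_i\}$); taking $V_0=\tfrac1K\partial_t$ one gets $\iota_{V_0}\omega=e^{Kt}\lambda_0$, hence $L_{V_0}\omega=d\iota_{V_0}\omega=\omega$, with $V_0$ transverse to $\{0\}\times Y$ and pointing into $W$ and $\iota_{V_0}\omega|_{\{0\}\times Y}=\lambda_0$; thus $\{0\}\times Y$ is $\omega$-concave with $c_1=1$. Near $t=1$ the same field $V_1=\tfrac1K\partial_t$ points out of $W$ and $\iota_{V_1}\omega|_{\{1\}\times Y}=e^{K}\lambda_1$, so $\{1\}\times Y$ is $\omega$-convex with $c_2=e^{K}$. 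The hard part is the near-symplecticity check along the zero arcs: one must ensure that the cross term $g_t$ cannot overwhelm the Morse--Bott quadratic $f_t$, and this is exactly what forces the fixed-zero reduction (so that $\partial_t\lambda_t$ vanishes on the arc, making $g_t$ quadratically small) together with a compactness argument yielding a single large $K$ that works for all $t$.
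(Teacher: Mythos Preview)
Your proof is correct and takes essentially the same approach as the paper: set $\omega=d(e^{Kt}\lambda_t)$ after making the path constant near the endpoints, compute $\omega\wedge\omega=2e^{2Kt}(Kf_t+g_t)\,dt\wedge\mathrm{vol}_Y$, and take $K$ large so that $Kf_t+g_t$ is positive off the vertical arcs and has Morse--Bott minima along them, with Liouville field $K^{-1}\partial_t$ near the ends. The only difference is that the paper assumes (without justification) the stronger normalization that $(\nabla\lambda_t)_{p_i}$ is $t$-independent, giving $\nabla^2 g_t|_{p_i}=0$, whereas your fixed-zero reduction via isotopy extension only yields $g_t=O(|x|^2)$ with bounded Hessian --- but as you observe this still suffices, since for $K$ large the positive-definite Hessian $K\,\nabla^2 f_t|_{p_i}$ dominates.
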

\begin{proof}
To begin with, assume that all 
\(\lambda_t\) vanish at the same finite set of points
\(p_1,\dots,p_k\in Y\), and that for each \(p_i\) the linearization \((\nabla\lambda_t)_{p_i}\) is independent of \(t\). We also assume the family is constant near the 
endpoints: there exists \(\varepsilon>0\) such that
\(\lambda_t=\lambda_0\) for \(t\in[0,\varepsilon]\) and
\(\lambda_t=\lambda_1\) for \(t\in[1-\varepsilon,1]\).
\smallskip%

We use the same parameter \(t\in[0,1]\) 
as the coordinate on the first factor of 
\(W=[0,1]\times Y\). 
For each \(t\), regard \(\lambda_t\) as a \(1\)-form on the slice \(\{t\}\times Y\). These assemble to a smooth \(1\)-form \(\lambda\) on \(W\), with no \(dt\)-component:
\[
\lambda(\partial_t)=0, \quad 
\lambda|_{\{t\}\times Y}=\lambda_t.
\]
We seek a function \(h=h(t)\) such that the \(2\)-form
\[
d(h\lambda)
\]
is near-symplectic on \(W\). Using the decomposition
\[
d = dt\wedge\partial_t + d_Y,
\]
we compute:
\[
d(h\lambda)
 = dh\wedge\lambda + h\,d\lambda
 = \dot h\,dt\wedge\lambda_t
   + h\,dt\wedge\dot\lambda_t
   + h\,d_Y\lambda_t,
\]
where \(\dot\lambda_t=\partial_t\lambda_t\). Thus:
\[
d(h\lambda)\wedge d(h\lambda)
 = 2h\,dt\wedge
   \Bigl(
      \dot h\,\lambda_t\wedge d_Y\lambda_t
      + h\,\dot\lambda_t\wedge d_Y\lambda_t
   \Bigr).
\]
Choose a volume form \(\operatorname{vol}\) on \(Y\) and 
a compatible connection \(\nabla\). Define smooth families of functions \(f_t, g_t \colon Y \to \mathbb{R}\) as:
\[
\lambda_t \wedge d_Y\lambda_t = f_t\,\operatorname{vol},
\quad
\dot\lambda_t \wedge d_Y\lambda_t = g_t\,\operatorname{vol}.
\]
Setting \(h(t)=e^{Ct}\) with \(C>0\), we find: 
\[
d(h\lambda)\wedge d(h\lambda)
 = 2 e^{2Ct}\, dt\wedge
   \bigl( C f_t + g_t \bigr)\,\operatorname{vol}.
\]
Let \(p_i\in Y\) be a zero of \(\lambda_t\). 
By Lemma \ref{lambda_closed}, \((d_Y\lambda_t)_{p_i}=0\). Since 
\((\nabla\lambda_t)_{p_i}\) is independent of \(t\), 
it follows that 
\((\nabla\dot\lambda_t)_{p_i}=0\). Thus, 
for each zero \(p_i\) and each \(t\), 
\[
g_t(p_i)=0,\quad (dg_t)_{p_i}=0,\quad (\nabla^2 g_t)_{p_i}=0.
\]
On the other hand, the near-contact condition on \(\lambda_t\) gives \(f_t(p_i)=0\), \((df_t)_{p_i}=0\), and the Hessian
\((\nabla^2 f_t)_{p_i}\) is positive-definite. 
Hence, \(C f_t + g_t\) has a nondegenerate minimum at 
each zero \(p_i\). It follows that there exist neighbourhoods \(U_i\ni p_i\) such that for each \(C > 1\) and each \(t\in[0,1]\), the function \(C f_t + g_t\) is positive on \(U_i - \{p_i\}\). Set \(U = U_1\cup\cdots\cup U_k\). 
Since \(g_t\) is uniformly bounded and \(Y - U\) 
is compact, it follows that for all large enough \(C > 0\), 
the function \(C f_t + g_t\) is positive on
\(Y-\{p_1,\dots,p_k\}\) for all \(t\in[0,1]\).
\smallskip%

Define a function \(f \colon W\to\mathbb{R}\) as:
\[
f(t,y)= e^{2Ct}\bigl(C f_t(y) + g_t(y)\bigr).
\]
Then \(f > 0\) away from the arcs \([0,1]\times\{p_i\}\), and each such arc is a Morse-Bott critical submanifold of \(f\). 
Hence, \(d(e^{Ct}\lambda)\) is near-symplectic on \(W\).
\smallskip%

Near \(t=1\) the family \(\lambda_t\) is constant, so
\(d(e^{Ct}\lambda_t)\) coincides 
with \(d(e^{Ct}\lambda_1)\) on a collar of
\(\{1\}\times Y\). Moreover,
\[
L_{\partial_t}\, d(e^{Ct}\lambda_t) = C\, d(e^{Ct}\lambda_t),
\]
so \(C^{-1}\partial_t\) is a Liouville vector field for \(d(e^{Ct}\lambda_t)\) near \(\partial W\). This completes the proof. \qed
\end{proof}
\smallskip%

Taubes \cite{Taub-7} and Perutz \cite{Perutz} show that a near-symplectic form \(\omega\) canonically orients its zero set \(Z=\omega^{-1}(0)\). For completeness, we recall their construction.
\smallskip

Let \(\omega\) be a near-symplectic form on \(X\), and let
\(Z=\omega^{-1}(0)\) be its \(1\)-dimensional zero set.
For each \(x\in Z\), the range 
of \(\nabla\omega_x\) is a \(3\)-plane
\(P_x\subset\Lambda^2T^{*}_{X}|_x\), on 
which the wedge pairing is positive-definite. 
Let \(N_x\subset T_X|_x\) be a complementary subspace to
\(T_Z|_x\). Then \(\nabla\omega_x : N_x \to P_x\) is an isomorphism. Let \(\partial_Z\in T_Z|_x\) be a 
tangent vector. Define the bilinear form:
\begin{equation}\label{A_omega}
A(\xi,\eta)
= \bigl( \iota_{\partial_Z}(\nabla_\xi\omega) \bigr)_x(\eta),
\quad \xi,\eta\in N_x.
\end{equation}
Since \(P_x\) is positive-definite, 
\(A\) is nondegenerate, and thus the sign of
\(\operatorname{det} A\) is defined. The canonical orientation on \(Z\) is defined by requiring that
\(\partial_Z\) be chosen so that 
\(\operatorname{det} A > 0\).
\smallskip%

\(d\omega=0\) and \((\nabla_{\partial_Z}\omega)_x=0\)
imply that \(A\) is symmetric.
\smallskip%

Recall an additional property of \(A\), found in 
\cite{Taub-7, Perutz}. Orient \(X\) so that \(\omega\wedge\omega\ge 0\). 
Let \(x\in Z\) and let \(P_x\) be the range of \(\nabla\omega_x\). The maximal positive-definite \(3\)-plane \(P_x\) determines a conformal
class of metrics \(g_x\) at \(T_X|_x\) such that 
\(P_x=\Lambda_{+}^{2} T^{*}_{X}|_x\). 
Choose \(N_x\subset T_X|_x\) to be the \(g_x\)-orthogonal complement of \(T_Z|_x\). With \(\partial_Z\) and \(A\) as above, the form \(A\) is
\(g_x\)-traceless: for each \(g_x\)-orthonormal basis
\(e_1,e_2,e_3\) of \(N_x\),
\[
\sum_{i=1}^3 A(e_i,e_i)=0.
\]
Let us now specialize to \(\mathbb{R}^4\) and 
consider near-symplectic 
forms whose coefficients are homogeneous of degree \(1\). 
Let \(\omega\) be such a form. 
Since \(\omega\) is homogeneous of degree \(1\) and near-symplectic, its zero set \(Z=\omega^{-1}(0)\) is a line in \(\mathbb{R}^4\). Let us classify these forms up to deformation; this is a reformulation of \cite{Taub-7, Perutz}.
\begin{lemma}\label{linear_connected}
Let \(\mathbb{R}^4\) be oriented and let \(\ell\subset\mathbb{R}^4\) be a line.
Consider the space of near-symplectic forms \(\omega\) on \(\mathbb{R}^4\)
with homogeneous coefficients of degree \(1\), such that
\(\omega\wedge\omega\ge 0\) and whose zero set is exactly \(\ell\).
Then this space has precisely two connected components, distinguished
by the orientation they induce on \(\ell\).
\end{lemma}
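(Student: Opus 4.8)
The plan is to turn this into an explicit finite-dimensional statement and then compute $\pi_0$ directly. Choose linear coordinates with $\ell$ the $x_1$-axis and write $x'=(x_2,x_3,x_4)$. A $2$-form with coefficients homogeneous of degree $1$ vanishing along $\ell$ has coefficients depending linearly on $x'$ only; in particular $\nabla\omega$ is constant along $\ell$ and $\nabla_{\partial_{x_1}}\omega=0$, so $\omega$ is determined by the three constant $2$-forms $\omega^{(k)}:=\nabla_{\partial_{x_k}}\omega$, $k=2,3,4$. The requirement $d\omega=0$ is a linear condition on the triple $(\omega^{(2)},\omega^{(3)},\omega^{(4)})\in(\Lambda^2\mathbb R^4)^3$, the requirement $\omega\wedge\omega\ge 0$ is closed, and the requirement that the zero set equal $\ell$ and be nondegenerate there is open; hence the set $\mathcal N_\ell$ of near-symplectic forms in question is an open subset of a finite-dimensional vector space, so it is locally path-connected and it suffices to count its components.

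Next I would pin down the invariant. By \eqref{A_omega} such an $\omega$ induces a symmetric bilinear form $A_\omega$ on the normal space $N\cong\mathbb R^3$, intrinsically defined because $\nabla_{\partial_Z}\omega=0$; the near-symplectic condition makes $A_\omega$ nondegenerate, and the trace-free property recalled above from \cite{Taub-7,Perutz} prevents it from being definite, so $A_\omega$ has signature $(2,1)$ or $(1,2)$ and $\operatorname{sgn}\operatorname{det}A_\omega\in\{\pm1\}$ is defined. This sign is locally constant on $\mathcal N_\ell$, and by the definition of the canonical orientation on $Z$ (choose $\partial_Z$ with $\operatorname{det}A_\omega>0$) it records precisely the orientation induced on $\ell$. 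Both values occur: $\omega\in\mathcal N_\ell$ implies $-\omega\in\mathcal N_\ell$, and $A_{-\omega}=-A_\omega$, so since $\dim N$ is odd the two forms orient $\ell$ oppositely. Hence $\mathcal N_\ell$ has at least two components.

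For ``at most two'' I would exhibit $\mathcal N_\ell$ as a fibre bundle over a connected base with connected fibres in each signature. The assignment $\omega\mapsto c_\omega$, where $c_\omega$ is the conformal class on $\mathbb R^4$ with $\Lambda^2_+=\operatorname{Im}(\nabla\omega)$, maps $\mathcal N_\ell$ onto the space $\mathcal C$ of conformal classes, which is connected ($\cong\mathrm{GL}^+(4,\mathbb R)/\mathbb R^+\mathrm{SO}(4)$); equivariance under the linear stabiliser of $\ell$ in $\mathrm{GL}^+(4,\mathbb R)$ — which acts transitively on $\mathcal C$ by a short explicit computation — makes this map a fibre bundle. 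The fibre over $c$ is the set of closed, $c$-self-dual $2$-forms with homogeneous linear coefficients vanishing on $\ell$ and with zero set exactly $\ell$; working in a $c$-orthonormal frame $\omega_1,\omega_2,\omega_3$ of $\Lambda^2_+$ and writing $\omega=\sum_j c_j\,\omega_j$ with $c_j$ linear in $x'$, one checks that $d\omega=0$ together with $\omega|_\ell=0$ is equivalent to the coefficient matrix being symmetric and $c$-traceless, that this matrix is exactly $A_\omega$, and that $\omega\wedge\omega=2\|A_\omega x'\|^2\,\operatorname{vol}$; so the fibre is identified with the space of nondegenerate traceless symmetric $3\times3$ forms, whose two signature strata are each connected (using that $\mathrm{GL}^+(3,\mathbb R)$ and $\mathrm{SO}(3)$ are connected). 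A fibre bundle over a connected base with connected fibres has connected total space; applied to each signature stratum this gives exactly two components of $\mathcal N_\ell$, and by the previous paragraph they are distinguished by the orientation of $\ell$.

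The delicate point is the middle step of the last paragraph: verifying that the coordinate model really does identify the fibre as claimed, and in particular that ``closed $+$ $c$-self-dual $+$ linearisation nondegenerate'' already forces the zero set to be exactly $\ell$ with nondegenerate transverse behaviour. This is where the identity $\omega\wedge\omega=2\|A_\omega x'\|^2\operatorname{vol}$ does the work, since it is manifestly positive off $\ell$ and has positive-definite normal Hessian along $\ell$ precisely when $A_\omega$ is invertible; it also makes the ``$\omega\wedge\omega\ge 0$'' condition automatic in the self-dual gauge, so no genuine obstruction arises in keeping the deformations inside $\mathcal N_\ell$. Once this model and the transitivity of the stabiliser of $\ell$ on $\mathcal C$ are established, the rest — connectedness of each signature stratum and the matching of the two components with the orientation invariant of the second paragraph — is routine bookkeeping.
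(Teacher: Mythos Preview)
Your proof is correct and follows essentially the same route as the paper: both reduce to the observation that any such $\omega$ is self-dual for the conformal class determined by $\operatorname{Im}(\nabla\omega)$, then identify the forms for a fixed conformal class with nondegenerate symmetric traceless bilinear forms $A$ on $N$, whose space has two components distinguished by $\operatorname{sgn}\det A$. The only difference is in how the reduction to a fixed conformal class is justified---the paper simply invokes convexity of the space of linear metrics, while you set up an explicit fibre bundle over $\mathcal C$ via the transitive action of the stabiliser of $\ell$; both fill in the same step.
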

\begin{proof}
To begin with, 
choose a linear metric \(g\) on \(\mathbb{R}^4\) such that the range
of \(\nabla\omega_0\) consists of \(g\)-self-dual forms.
Since \(\omega\) has homogeneous linear coefficients, it 
follows that 
\(\omega_x = (\nabla_x\omega)_0\) for all 
\(x\in\mathbb{R}^4\); in particular, 
\(\omega\) is \(g\)-self-dual. The space of linear metrics is convex, so it suffices to classify
homogeneous linear self-dual forms \(\omega\) on \(\mathbb{R}^4\) whose zero
set is \(\ell\). We show that this space has two connected components.
\smallskip%

Let \(N\subset\mathbb{R}^4\) be the \(g\)-orthogonal complement of \(\ell\) at
the origin. Fix \(\partial_\ell\in\ell\), and let \(\nabla\) be the flat connection
associated to \(g\). As in \eqref{A_omega}, define a bilinear form:
\[
A(\xi,\eta)
= \bigl(\iota_{\partial_\ell}(\nabla_\xi\omega)\bigr)_0(\eta),
\qquad \xi,\eta\in N.
\]
Then \(A\) is nondegenerate, symmetric, and traceless.
For a homogeneous linear self-dual form \(\omega\) vanishing exactly on
\(\ell\), the form \(A\) determines \(\omega\) uniquely.  
Conversely, any nondegenerate symmetric traceless form \(A\) on \(N\) arises from such an \(\omega\).
\smallskip%

The space of nondegenerate symmetric traceless 
bilinear forms on \(N\) has two connected components, distinguished by the sign of
\(\operatorname{det} A\). This completes the proof. \qed
\end{proof}
\smallskip%

Let us now specialize to the case when \(X\) is a ball in \(\mathbb R^4\). 
\begin{proposition}\label{near-ball}
Let $B \subset \mathbb{R}^4$ be the closed unit ball with boundary
$\Sigma = \partial B$, and let $\omega$ and $\omega'$ be near-symplectic
forms defined in a neighbourhood of $B$ such that $\omega \wedge \omega$
and $\omega' \wedge \omega'$ induce the same orientation on $B$. Suppose
that $\Sigma$ is both $\omega$–convex and $\omega'$–convex, and let
$\lambda$ and $\lambda'$ be the induced near-contact forms on $\Sigma$.
Let
\[
Z = \omega^{-1}(0), \quad Z' = {\omega'}^{-1}(0).
\]
Assume that $Z$ and $Z'$ are unions of arcs (i.e.\ have no circular
components) and that $Z$ and $Z'$ have the same number of connected
components. Then $\lambda$ and $\lambda'$ are homotopic through
near-contact forms on $\Sigma$.
\smallskip

Moreover, suppose that the antipodal $\mathbb Z_2$–action on $B$ preserves
$\omega$ and $\omega'$, and that there exist $\mathbb Z_2$–invariant
$\omega$– and $\omega'$–Liouville vector fields near $\Sigma$ making
$\Sigma$ $\omega$– and $\omega'$–convex. Then the above homotopy may be
chosen $\mathbb Z_2$–equivariant, so the induced near-contact forms on the
quotient $\Sigma/\mathbb Z_2 \cong \mathbb{RP}^3$ are homotopic.
\end{proposition}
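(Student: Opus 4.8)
The plan is to deduce the whole statement from Proposition~\ref{contact_classification} --- applied on $\Sigma=S^3$ for the first assertion and on the quotient $\mathbb{RP}^3$ for the last one --- so that the genuine work, assembling a near-contact homotopy out of Eliashberg's $h$-principle, is carried out there, and only the hypotheses of that proposition remain to be checked. First I would locate the zeros. An $\omega$-convex hypersurface is transverse to $Z$, and $\iota_V\omega$ vanishes exactly where $\omega$ does, so $\lambda^{-1}(0)=Z\cap\Sigma$; since every component of $Z$ is an arc meeting $\Sigma$ transversely in two points, $\lambda$ has exactly $2k$ zeros with $k=\#\pi_0(Z)$, and by hypothesis so does $\lambda'$. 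These zeros are nondegenerate, hence of index $\pm1$, and the Poincar\'e--Hopf theorem on $S^3$ forces exactly $k$ zeros of each sign, for both forms. Choosing an orientation-preserving diffeomorphism of $S^3$, isotopic to the identity, that carries the zeros of $\lambda$ onto those of $\lambda'$ matching indices (possible since the two configurations have the same combinatorics and $\operatorname{Diff}^{+}(S^3)$ is connected), and replacing $\lambda'$ by its pullback --- which connects back to $\lambda'$ by a near-contact homotopy, the near-contact condition being diffeomorphism-invariant --- I may assume $\lambda$ and $\lambda'$ share the zero set $\{q_1,\dots,q_{2k}\}$ with $\operatorname{ind}_{q_j}\lambda=\operatorname{ind}_{q_j}\lambda'$ for every $j$. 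The orientation condition of Proposition~\ref{contact_classification} is then automatic: along $\Sigma$ one has $d\lambda=L_V\omega=\omega$, so $\lambda\wedge d\lambda$ equals $\tfrac12\iota_V(\omega\wedge\omega)$ restricted to $\Sigma$; since $V$ points outward and $\omega\wedge\omega$ is a nonnegative multiple of the orientation of $B\subset\mathbb R^4$ --- the same one as for $\omega'\wedge\omega'$ --- this is the outward boundary orientation of $\Sigma$ in either case.

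The one substantive step is that $\lambda$ and $\lambda'$ are homotopic as nowhere-vanishing $1$-forms on $N=\Sigma\setminus\{q_1,\dots,q_{2k}\}$. As $S^3$ is parallelizable, such forms are, up to homotopy, maps $N\to S^2$; and $N\simeq\bigvee^{2k-1}S^2$, so $[N,S^2]\cong\mathbb Z^{2k-1}$ and two of them are homotopic iff their restrictions to the small linking spheres $S^2_j$ about the $q_j$ have equal degrees (any $2k-1$ of the $2k$ spheres form a basis of $H_2(N)$, the last relation being automatic by Poincar\'e--Hopf). But $\lambda/|\lambda|$ restricted to $S^2_j$ has degree $\operatorname{ind}_{q_j}\lambda$, which we arranged to equal $\operatorname{ind}_{q_j}\lambda'$; hence $\lambda\simeq\lambda'$ on $N$. (If $k=0$ the forms are genuinely symplectic, the induced contact structures on $S^3$ are fillable, hence tight, hence isotopic by Eliashberg, and a homotopy through contact forms follows at once; so assume $k\ge1$.) Proposition~\ref{contact_classification} now yields a near-contact homotopy between $\lambda$ and $\lambda'$ on $\Sigma$.

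For the $\mathbb Z_2$-equivariant refinement I would repeat the argument on the quotient. First, $0\notin Z$: a $2$-form with homogeneous linear coefficients is $\sigma$-anti-invariant, whereas $\omega$ is $\sigma$-invariant and vanishes to exactly first order along $Z$, so its $1$-jet at a fixed point of $\sigma$ lying on $Z$ would have to vanish --- impossible. Hence $\sigma$ acts freely on $Z$ and, no free involution of an interval existing, permutes the arcs in antipodal pairs; in particular $k$ is even, the $2k$ zeros of $\lambda$ split into $k$ antipodal pairs, and antipodal zeros carry equal index (as $\sigma$ is orientation-preserving and $\sigma^{*}\lambda=\lambda$). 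The matching diffeomorphism and its isotopy can be taken $\sigma$-equivariant, and the orientation computation is unchanged, so the substantive step becomes $\bar\lambda\simeq\bar\lambda'$ as nowhere-vanishing $1$-forms on $N/\sigma\cong\mathbb{RP}^3\setminus\{k\text{ pts}\}\simeq\mathbb{RP}^2\vee\bigvee^{k-1}S^2$. Now $[\mathbb{RP}^3\setminus\{k\text{ pts}\},S^2]\cong\mathbb Z/2\times\mathbb Z^{k-1}$: the $\mathbb Z^{k-1}$ is detected by linking-sphere degrees, matched exactly as before, while the extra $\mathbb Z/2$ records the class of $\bar\lambda|_{\mathbb{RP}^2}$ in $[\mathbb{RP}^2,S^2]\cong\mathbb Z/2$ for a standard $\mathbb{RP}^2\subset\mathbb{RP}^3$ disjoint from the punctures; this invariant changes when a zero of $\bar\lambda$ is dragged across $\mathbb{RP}^2$, so an appropriate equivariant isotopy inserted into the matching step normalises it as well. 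With all hypotheses met for $Y=\mathbb{RP}^3$, Proposition~\ref{contact_classification} produces a near-contact homotopy on $\mathbb{RP}^3$, whose pullback is the required $\mathbb Z_2$-equivariant homotopy on $\Sigma$.

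The main obstacle is this substantive step, and above all its equivariant form: pinning down the homotopy class of the induced boundary $1$-form, and --- over $\mathbb{RP}^3$ --- recognising the additional $\mathbb Z/2$-invariant supported on $\mathbb{RP}^2$ and verifying that the equivariant isotopies available in the matching step genuinely control it. Lemma~\ref{linear_connected} feeds in at the level of local models: two nondegenerate zeros of equal index are interchangeable through the normal form for near-symplectic zero sets, which is what lets one first make $\lambda$ and $\lambda'$ literally agree near each $q_j$ without disturbing the global comparison.
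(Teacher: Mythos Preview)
Your non-equivariant argument on $S^3$ is correct and takes a genuinely different route from the paper. The paper works one dimension up: instead of comparing $\lambda,\lambda'$ as nonvanishing $1$-forms on $\Sigma\setminus\{p_i\}$, it first isotopes $Z'$ to $Z$ as oriented links of arcs in $B$, then compares $\omega,\omega'$ as sections of the bundle of nondegenerate $2$-forms over $B\setminus Z$. That complement retracts onto a wedge of $k$ two-spheres (one linking sphere per \emph{arc}, not per endpoint), and Lemma~\ref{linear_connected} identifies the class on each linking sphere with the canonical orientation of the arc; restricting to $\Sigma$ then gives the homotopy of $1$-forms needed for Proposition~\ref{contact_classification}. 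Your approach is more economical on $S^3$ --- Poincar\'e--Hopf plus $[S^3\setminus\{2k\text{ pts}\},S^2]\cong\mathbb Z^{2k-1}$ detected by indices --- and sidesteps the link isotopy entirely. The paper's $4$-dimensional approach buys something in the equivariant case, which is exactly where your argument breaks.

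The gap is in your handling of the torsion $\mathbb Z/2$ on $\mathbb{RP}^3$. You are right that $H^2\bigl(\mathbb{RP}^3\setminus\{k\text{ pts}\};\mathbb Z\bigr)\cong\mathbb Z^{k-1}\oplus\mathbb Z/2$ and that the free part is controlled by indices. But the claim that this $\mathbb Z/2$ ``changes when a zero of $\bar\lambda$ is dragged across $\mathbb{RP}^2$'' is false in the sense you need. The relevant move is a point-pushing diffeomorphism $\psi$ of $\mathbb{RP}^3$ that carries one $q_j$ around the generator of $\pi_1(\mathbb{RP}^3)$ and back, fixing the other $q_i$. Its restriction to $X=\mathbb{RP}^3\setminus\{q_i\}$ acts trivially on $H_1(X)=\mathbb Z/2$ and on $H_2(X)=\mathbb Z^{k-1}$ (each linking sphere is sent to a linking sphere of the same point with degree $+1$, since $\psi$ is orientation-preserving). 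By naturality of the universal-coefficient sequence, $\psi^*$ is therefore the identity on \emph{all} of $H^2(X;\mathbb Z)$, torsion included; since $X$ has the homotopy type of a $2$-complex, $[X,S^2]\cong H^2(X;\mathbb Z)$ and $[f\circ\psi]=[f]$. The same computation shows that permuting equal-index zeros in the matching step cannot flip the $\mathbb Z/2$ either. So no freedom available in your purely $3$-dimensional setup normalizes the torsion obstruction.

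The paper's approach avoids the issue because in the quotient one works on $(B\setminus Z)/\mathbb Z_2$, the cone on $\mathbb{RP}^3$ with $k/2$ arcs removed (your argument that $0\notin Z$ is exactly what guarantees none pass through the cone point). The cone-and-link retraction in the paper uses only $\dim L\ge 3$, not $\pi_1(L)$, so this space still retracts to a wedge of $k/2$ two-spheres and the obstruction group is torsion-free. The comparison of $\omega,\omega'$ there is settled by Lemma~\ref{linear_connected}, and restricting to the boundary forces the $\mathbb Z/2$ you were worried about to vanish --- precisely because both near-contact forms extend near-symplectically over the filling. To repair your argument you would have to invoke that $4$-dimensional extension, which is the paper's step.
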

\begin{proof}
For simplicity, we treat only the non-equivariant statement; 
the antipodal case is identical, with all constructions 
carried out \(\mathbb Z_2\)-equivariantly.  
\smallskip%

Let $L$ be a closed manifold and define its cone:
\[
C(L) = \bigl([0,1]\times L\bigr)\big/\sim,\qquad 
(t_1,x_1)\sim (t_2,x_2)\ \Longleftrightarrow\ x_1=x_2,\ t_1=t_2=1.
\]
Let $p_{*}$ be the apex and 
$L_0 = \{0\}\times L \subset C(L)$.
\smallskip%

A link in $C(L)$ is a finite union of disjoint 
embedded arcs
$\gamma_i\subset C(L) - \{p_*\}$ with endpoints in $L_0$, each
transverse to $L_0$; an oriented link is such a link with an orientation
on each component. Two oriented links are 
isotopic with free endpoints in $L_0$ if they are connected by a smooth $1$–parameter
family of oriented links as above. It is standard that if $\operatorname{dim} L \ge 3$, 
any two oriented links in $C(L)$
are isotopic with free endpoints in $L_0$; this 
depends only on dimension of \(L\), not on \(\pi_1(L)\).
\smallskip%

Now strengthen this as follows. 
Let $D_1,\dots,D_k\subset L_0$ be
pairwise disjoint closed $3$–discs. Each cone \(C(D_i)\) embeds in \(C(L)\), as does the wedge
\[
C(D_1)\vee\cdots\vee C(D_k)\subset C(L),
\]
which is a strong deformation retract of \(C(L)\). Let \(Z = \gamma_1\cup \cdots \cup\gamma_k \subset C(L)\) be 
a link. We may isotope $Z$ so that 
there are disjoint $D_i\subset L_0$ such that 
$\gamma_i\subset C(D_i)$. Then $C(L) - Z$ deformation retracts onto the wedge
\[
\bigl(C(D_1) - \gamma_1\bigr)\ \vee \cdots \vee\
\bigl(C(D_k) - \gamma_k\bigr).
\]
Each $C(D_i)$ is a pointed $4$–ball; therefore, 
\(C(D_i) - \gamma_i\) retracts onto a pointed \(2\)-sphere. 
Hence, $C(L)- Z$ deformation retracts (relative to $p_*$) onto a wedge of $k$ 2-spheres. Again, only the 
dimension of \(L\) is used.
\smallskip%

Let us apply this with \(L = \Sigma = \partial B\), so 
that \(C(L)\) is \(B\), and 
\(p_*\) is the origin of \(B\). Let $\omega$ and $\omega'$ be near-symplectic 
forms on $B$ with zero sets
\[
Z = \omega^{-1}(0),\quad Z' = {\omega'}^{-1}(0).
\]
Both \(Z\) and \(Z'\) are oriented links in \(B\) and 
have the same number of components. By the above link isotopy
statement, there is an isotopy 
of \(Z'\) to \(Z\). Thus, we may assume that 
\(Z' = Z\) and the canonical orientations on
$Z$ induced by $\omega$ and $\omega'$ coincide.
\smallskip%

Set:
\[
Z\cap\Sigma = \{p_1,\dots,p_k\},\quad \Sigma = \partial B.
\]
Let $V$ and $V'$ be Liouville vector fields for $\omega$ and $\omega'$, defined in tubular neighbourhoods of $\Sigma$ and transverse to
$\Sigma$. Set:
\[
\lambda = \iota_V\omega,\quad \lambda' = \iota_{V'}\omega'.
\]
Since $\omega\wedge\omega$ and $\omega'\wedge\omega'$ 
are cooriented in \(B\), the $3$–forms $\lambda\wedge d\lambda$ and
$\lambda'\wedge d\lambda'$ are cooriented on $\Sigma$. We shall show
that $\lambda$ and $\lambda'$ are homotopic as 
non-vanishing $1$–forms 
on $\Sigma - \{p_1,\dots,p_k\}$. By
Lemma \ref{contact_classification}, this yields a homotopy through near-contact forms.
\smallskip%

In fact, we construct a homotopy between $\omega$ and $\omega'$ through nondegenerate $2$–forms on $B-Z$; this induces 
the desired homotopy between \(\lambda\) and \(\lambda'\).
\smallskip%

Consider the bundle $\mathcal{N} \to B$ of nondegenerate 
\(2\)-forms on \(B\). It has two connected components, distinguished by the orientation induced on \(B\). Let 
\(\mathcal{N}^{+} \subset \mathcal{N}\) be the connected 
component of those forms which induce the same orientation as 
\(\omega\). Since \(\omega\) and \(\omega'\) are nondegenerate and compatible with the orientation, they define sections \(\omega,\omega' \colon B-Z \to \mathcal{N}^{+}|_{B-Z}\). 
\smallskip%

$B-Z$ deformation retracts onto a wedge of pieces
$C(D_i)-\gamma_i$, and each $C(D_i)-\gamma_i$ retracts onto a small $2$–sphere $S_i$ linking $\gamma_i$. Thus, it suffices to show that $\omega$ and $\omega'$ are homotopic as sections of $\mathcal{N}^{+} \to S_i$ for each $S_i$. 
\smallskip%

Fix \(i\). Let \(U_i\) be an arbitrary small neighbourhood 
of an interior point of \(\gamma_i\). By a small 
\(C^1\)-perturbation, arrange that \(\omega\) and 
\(\omega'\) coincide with their linear parts 
in \(U_i\). Deform \(S_i\) so 
that \(S_i \subset U_i\). Since \(\omega\) and 
\(\omega'\) induce the same orientation 
on \(\gamma_i\), it follows from 
Lemma \ref{linear_connected} that there is 
a near-symplectic deformation of \(\omega\) 
to \(\omega'\) on \(U_i - \gamma_i\). This completes 
the proof. \qed
\end{proof}
\smallskip%

Proposition \ref{near-ball} and 
Lemma \ref{cobordism} allow us to introduce the 
following near-symplectic surgery. Let 
$B \subset \mathbb{R}^4$ be the unit ball with 
boundary $\Sigma = \partial B$. Let 
$\omega, \omega'$ be near-symplectic forms 
defined in a neighbourhood of $B$ such that 
$\omega$ and $\omega'$ induce the same orientation on 
$B$. Suppose that $\Sigma$ is both 
$\omega$– and $\omega'$–convex, and that 
both $Z = \omega^{-1}(0)$ and $Z' = {\omega'}^{-1}(0)$ 
are unions of arcs and have the same number of connected 
components. Then there exist a smaller ball 
$B_0 \subset B$ and a near-symplectic form 
$\hat{\omega}$ defined in a neighbourhood of $B$ 
such that $\hat{\omega} = \omega$ near 
$\partial B$ and such that 
$(B_0, \hat{\omega})$ is diffeomorphic to 
$(B, c\,\omega')$ for some constant $c > 0$.
\smallskip

Moreover, suppose that the antipodal 
$\mathbb{Z}_2$–action on $B$ preserves $\omega$ and $\omega'$, and there exist $\mathbb{Z}_2$–invariant $\omega$– and $\omega'$–Liouville vector fields near $\Sigma$ making $\Sigma$ $\omega$– and $\omega'$–convex. Then 
$\hat{\omega}$ can be chosen 
$\mathbb{Z}_2$–invariant.

\section{Proof of Proposition \ref{cone_zero}}\label{assemble-section} 
It suffices to treat the case \(n=1\); the general case is no harder. Let 
\((M,g)\) be an oriented Riemannian \(4\)-orbifold with a double point \(p\). Choose a neighbourhood \(V\) of \(p\) equipped with an isometry
\(V \simeq (B/\{\pm1\}, g_0)\), 
where \(B \subset \mathbb R^4\) is a ball centered 
at the origin and \(g_0\) is the flat metric. Let \(\psi\) be a closed SD form on \(M\). We may assume \(\psi(p)\neq 0\), otherwise there is nothing to prove. By Darboux’ theorem, \((V,\psi)\) is diffeomorphic to the quotient 
\((B,\omega_0)/\{\pm1\}\), where \(\omega_0\) is the standard symplectic form on \(B\).
\smallskip%

Proposition~\ref{cone_zero} follows from the following 
stronger statement. 
\begin{proposition}\label{cone_zero_ball}
Let \(B \subset \mathbb R^4\) be 
the unit ball centered at the origin, and let \(g_0\) be the flat metric. Then there exists a near-symplectic form \(\omega\) in a neighbourhood of \(B\) such that 
\(\omega\) vanishes at \(0 \in B^4\), \(\omega\) is invariant 
under the antipodal map, \(\omega\) coincides 
with the standard 
symplectic form in a neighbourhood of \(\partial B\), and 
\(\omega\) is \(g_0\)-self-dual in a neighbourhood 
of the origin.
\end{proposition}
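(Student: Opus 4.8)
The plan is to assemble $\omega$ from two local pieces — the standard symplectic form near $\partial B$ and a $\mathbb{Z}_2$–invariant, $g_0$–self–dual model near $0$ that vanishes at $0$ — joined across an intermediate collar by a near–symplectic cobordism produced via Eliashberg's $h$–principle, with every step carried out $\mathbb{Z}_2$–equivariantly so that the result descends to the orbifold $B/\{\pm1\}$. Near $\partial B$ one takes $\omega=\omega_{\mathrm{std}}$, the standard symplectic form: it has constant (hence even) coefficients, so it is antipodal–invariant, and a suitable multiple of the Euler field $E=\sum x_i\partial_{x_i}$ is an $\omega_{\mathrm{std}}$–Liouville field making $\partial B$ convex and inducing on $\partial B/\{\pm1\}\cong\mathbb{RP}^3$ the standard, zero–free contact form $\lambda_{\mathrm{std}}$.

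The core of the argument is the local model at the origin. I would fix a closed, $\mathbb{Z}_2$–invariant, $g_0$–self–dual $2$–form $\omega_0$ defined on a small ball $B_\varepsilon$ with $\omega_0(0)=0$ — equivalently a $g_0$–harmonic self–dual form with $\mathbb{Z}_2$–even coefficients vanishing at the origin (closedness of a self–dual form being a linear first–order condition) — and arrange, after a perturbation supported away from $0$ if needed, that it is near–symplectic off $0$. Taking again a $\mathbb{Z}_2$–invariant multiple of $E$ as an $\omega_0$–Liouville field makes a sphere $S_\varepsilon$ around $0$ $\omega_0$–convex and yields a $\mathbb{Z}_2$–invariant near–contact form $\lambda_0$ on $S_\varepsilon/\{\pm1\}\cong\mathbb{RP}^3$. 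By Lemma~\ref{overtwisted}, applied $\mathbb{Z}_2$–equivariantly near the zeros of $\lambda_0$, together with Lemma~\ref{local_lambda}, one may assume — without altering $\omega_0$ near $0$, where it stays $g_0$–self–dual — that every punctured neighbourhood of each zero of $\lambda_0$ contains an overtwisted disc.

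Next I compare $\lambda_0$ and $\lambda_{\mathrm{std}}$ on $\mathbb{RP}^3$. Applying Proposition~\ref{near-ball} in its equivariant form — after, if necessary, using the near–symplectic surgery of \S\ref{near-symplectic-section} to replace the outer piece by a near–symplectic form whose vanishing arcs agree in number and orientation with those of $\omega_0$ — produces a $\mathbb{Z}_2$–equivariant homotopy through near–contact forms from $\lambda_0$ to $\lambda_{\mathrm{std}}$; it is exactly the overtwisted discs near the zeros that let Eliashberg's $h$–principle, via Proposition~\ref{contact_classification}, deliver such a homotopy. Feeding this homotopy into the near–symplectic cobordism Lemma~\ref{cobordism}, carried out equivariantly on the double cover, gives a $\mathbb{Z}_2$–invariant near–symplectic form on a cylinder $[0,1]\times(S^3/\{\pm1\})$ that is concave at one end (realizing $c_1\lambda_0$) and convex at the other (realizing $c_2\lambda_{\mathrm{std}}$). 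Gluing, in order, $(B_\varepsilon,\omega_0)$, this cobordism, and the outer region carrying $\omega_{\mathrm{std}}$ — matching convex boundaries to concave ones, reparametrizing the collars, and rescaling the pieces by the positive constants $c_1,c_2$ — produces a $\mathbb{Z}_2$–invariant near–symplectic form $\omega$ on a neighbourhood of $B$ that equals $\omega_{\mathrm{std}}$ near $\partial B$, vanishes at $0$, and equals (up to a positive constant) $\omega_0$, hence is $g_0$–self–dual, near $0$. This is the assertion.

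The step I expect to be the main obstacle is the construction and control of the origin model. One has to produce $\omega_0$ that is at once antipodal–invariant, $g_0$–self–dual, and vanishing at $0$, and then track its vanishing locus — its number of components and its canonical orientation — accurately and equivariantly enough that Proposition~\ref{near-ball} genuinely connects $\lambda_0$ to the zero–free contact form $\lambda_{\mathrm{std}}$; reconciling the bookkeeping of vanishing arcs through the surgery and the cobordism while keeping the $\mathbb{Z}_2$–action throughout is the delicate part. The Eliashberg $h$–principle input (Lemma~\ref{overtwisted} and Proposition~\ref{contact_classification}) is essential here — the presence of an overtwisted disc near each zero is precisely what makes the homotopy from $\lambda_0$ to $\lambda_{\mathrm{std}}$ possible — and cannot be replaced by a soft argument.
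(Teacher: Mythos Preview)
Your overall architecture---inner self-dual model, outer standard form, Eliashberg-based cobordism in between---is the right shape, but there is a genuine gap at the matching step. Every tool you invoke to produce the homotopy, namely Proposition~\ref{contact_classification}, Proposition~\ref{near-ball}, and (in the form it is proved) Lemma~\ref{cobordism}, requires that the two near-contact forms have the \emph{same} zero set, with $k>0$, and the surgery at the end of \S\ref{near-symplectic-section} likewise assumes equal numbers of vanishing arcs on both sides. Your $\lambda_{\mathrm{std}}$ is zero-free, while any $\lambda_0$ induced by a self-dual form vanishing at $0$ has zeros; there is no mechanism in the paper's toolkit that lets you pass between these. Your parenthetical ``if necessary, use the near-symplectic surgery to replace the outer piece by one whose vanishing arcs agree in number'' is precisely the missing step, and it is not supplied by that surgery, which presupposes the arc counts already match.

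The paper resolves this by working in the other direction: it first invokes Taubes' theorem to insert, inside $B$ but away from $0$, an antipodal pair of zero circles into $\omega_{\mathrm{st}}$. It then linearizes near a point $p$ of one circle, obtaining a small $\omega$-convex ball $D$ through which one arc passes, and forms an antipodal-invariant $\omega$-convex connected sum $X\supset D\cup D_0\cup(-D)$ with a convex ball $D_0$ about the origin. Now $\omega^{-1}(0)\cap X$ consists of exactly two arcs, which matches the explicit $\mathbb{Z}_2$-invariant model $\omega_\varepsilon$ of Lemma~\ref{cross_perturb} (whose zero set also has two arc components). Only at this point does Proposition~\ref{near-ball} apply, and the surgery replaces $\omega|_X$ by $c\,\omega_\varepsilon$; a final deformation makes it homogeneous self-dual near $0$. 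The step you are missing is this Taubes insertion of a zero circle, together with the careful choice of the convex region $X$ so that the arc counts line up.
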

\smallskip%

If this is the case, 
replace \(\psi\) on \(V\) by such an \(\omega\). Then the theorem of Auroux-Donaldson-Katzarkov applies to keep \(\psi\) self-dual for some metric that agrees with \(g_0\) near \(p\). The rest of this section is the proof of Proposition~\ref{cone_zero_ball}.
\smallskip%

\begin{lemma}
Let $F = (f_1,f_2,f_3) \colon 
\mathbb{R}^4 \to \mathbb{R}^3$ be defined as:
\[
\begin{aligned}
f_1(x) &= 2x_1x_3 - 2x_2x_4 - x_3x_4,\\
f_2(x) &= -x_1(4x_2 + x_3),\\
f_3(x) &= x_2^2 + x_3^2 - H(x_1,x_4),
\end{aligned}
\qquad
H(x_1,x_4) = 3x_1^2 - x_1x_4 - x_4^2.
\]
Then for each $\varepsilon \ge 0$,
\[
F^{-1}(0,0,-\varepsilon)
= \{x \in \mathbb{R}^4 : x_2 = x_3 = 0,\ H(x_1,x_4) = \varepsilon\}.
\]
For $\varepsilon>0$, $dF$ has full rank 
on $F^{-1}(0,0,-\varepsilon)$, and for $\varepsilon=0$, it has full rank on $F^{-1}(0,0,0)-\{0\}$.
\smallskip%

Set:
\begin{equation}\label{omega_f}
\omega = f_1\,\omega_1 + f_2\,\omega_2 + f_3\,\omega_3,
\end{equation}
where
\[
\omega_1 = dx_1\wedge dx_2 + dx_3\wedge dx_4,\quad
\omega_2 = dx_1\wedge dx_3 - dx_2\wedge dx_4,\quad
\omega_3 = dx_1\wedge dx_4 + dx_2\wedge dx_3.
\]
Then \(\omega\) is closed and self-dual with respect to the standard flat metric and orientation. In particular, \(\omega\) is near-symplectic on \(\mathbb{R}^4 - \{0\}\), and \(\omega+\varepsilon\omega_3\) is near-symplectic on \(\mathbb{R}^4\) for all \(\varepsilon>0\).
\end{lemma}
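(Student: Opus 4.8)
The plan is to treat this as a sequence of four essentially independent checks: the level-set identity, the rank of $dF$, the closedness of $\omega$, and then an appeal to the criterion recalled just above in this section — that a closed self-dual $2$-form which is transverse to the zero section (meaning $\nabla\omega$ has rank $3$ at every zero) is automatically near-symplectic. Here the transversality condition at a zero of $\omega=\sum f_i\omega_i$ is exactly that the Jacobian $dF$ has rank $3$ there, because $\nabla\omega=\sum df_i\otimes\omega_i$ and $\omega_1,\omega_2,\omega_3$ are everywhere linearly independent. Self-duality of $\omega$, and of $\omega+\varepsilon\omega_3$, comes for free: $\omega_1,\omega_2,\omega_3$ is the standard orthogonal frame of $\Lambda^2_+$ for the flat metric and standard orientation, so any function-linear combination of them is pointwise self-dual; and since $\omega_3$ has constant coefficients, adding $\varepsilon\omega_3$ only changes $f_3$, by an additive constant.

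\emph{The level sets.} A point lies in $F^{-1}(0,0,-\varepsilon)$ iff $f_1=f_2=0$ and $f_3=-\varepsilon$. From $f_2=-x_1(4x_2+x_3)=0$ one splits into the cases $x_1=0$ and $x_3=-4x_2$, and in each case $f_1=0$ splits once more; a short enumeration shows that, apart from the branch $x_2=x_3=0$, substituting the branch relations into $f_3=-\varepsilon$ gives an equation $Q(x)=-\varepsilon$ with $Q$ a positive-definite quadratic in the surviving coordinates — for instance $5x_2^2+x_4^2=-\varepsilon$, $x_2^2+x_3^2=-\varepsilon$, or $17(x_1^2+x_2^2)=-\varepsilon$ — which for $\varepsilon\ge0$ forces the point to be the origin (and $\varepsilon=0$). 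On the remaining branch $x_2=x_3=0$ one has $f_1=f_2\equiv0$ and $f_3=-H(x_1,x_4)$, so $f_3=-\varepsilon$ reads $H(x_1,x_4)=\varepsilon$; the reverse inclusion is a one-line substitution. Hence $F^{-1}(0,0,-\varepsilon)=\{x_2=x_3=0,\ H(x_1,x_4)=\varepsilon\}$ for all $\varepsilon\ge0$.

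\emph{The rank of $dF$.} On $\{x_2=x_3=0\}$ the rows $df_1$ and $df_2$ of the Jacobian have nonzero entries only in the $x_2,x_3$ columns, while $df_3$ has nonzero entries only in the $x_1,x_4$ columns; so $dF$ has rank $3$ there precisely when the $2\times2$ minor of $(df_1,df_2)$ in the $(x_2,x_3)$-columns is nonzero and $df_3\neq0$. That minor works out to $2x_1(4x_1-x_4)$, and $df_3$ vanishes on $\{x_2=x_3=0\}$ only at the origin. The key point is that on $F^{-1}(0,0,-\varepsilon)-\{0\}$ one necessarily has $x_1\neq0$ and $x_4\neq4x_1$: otherwise $H(x_1,x_4)$ would equal $H(0,x_4)=-x_4^2$ or $H(x_1,4x_1)=-17x_1^2$, neither of which can be the nonnegative number $\varepsilon$ unless the point is the origin. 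Therefore $dF$ has full rank on all of $F^{-1}(0,0,-\varepsilon)-\{0\}$, which is the entire preimage when $\varepsilon>0$.

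\emph{Closedness and assembly.} Since the $\omega_i$ are parallel, $d\omega=df_1\wedge\omega_1+df_2\wedge\omega_2+df_3\wedge\omega_3$; expanding in the frame $dx_1\wedge dx_2\wedge dx_3$, $dx_1\wedge dx_2\wedge dx_4$, $dx_1\wedge dx_3\wedge dx_4$, $dx_2\wedge dx_3\wedge dx_4$ turns $d\omega=0$ into four linear identities among first partials — e.g.\ the coefficient of $dx_1\wedge dx_2\wedge dx_3$ is $\partial_{x_3}f_1-\partial_{x_2}f_2+\partial_{x_1}f_3$ — each checked by inspection, and the same holds for $\omega+\varepsilon\omega_3$. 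Now assemble: $\omega^{-1}(0)=F^{-1}(0,0,0)$, so on $\mathbb{R}^4-\{0\}$ the form $\omega$ is closed, self-dual, and transverse to the zero section (the rank statement with $\varepsilon=0$), hence near-symplectic there by the quoted criterion; and for $\varepsilon>0$ the zero set $(\omega+\varepsilon\omega_3)^{-1}(0)=F^{-1}(0,0,-\varepsilon)$ misses the origin, so $\omega+\varepsilon\omega_3$ is everywhere a closed self-dual form transverse to the zero section, hence near-symplectic on all of $\mathbb{R}^4$. The one genuine obstacle — everything else being routine polynomial algebra — is the bookkeeping of the branch analysis: one must notice that the loci where the rank argument could break down, together with the auxiliary solution branches of $f_1=f_2=0$, are precisely the places where $H\le0$, and so meet $F^{-1}(0,0,-\varepsilon)$ with $\varepsilon\ge0$ only at the origin.
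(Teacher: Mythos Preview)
Your proof is correct and is precisely the direct computation the paper alludes to; the paper's own proof consists of the single sentence ``All statements follow by a direct computation,'' and you have carried this out accurately, including the branch analysis, the $2\times2$ minor $2x_1(4x_1-x_4)$, and the observation that the degenerate loci $x_1=0$ and $x_4=4x_1$ force $H\le 0$.
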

\begin{proof}
All statements follow by a direct computation. \qed
\end{proof}
\smallskip%

\begin{lemma}\label{cross_perturb}
Let \(\omega\) be as in \eqref{omega_f}. Then there 
exists a small, antipodal-invariant 
perturbation \(\omega_{\varepsilon}\) of \(\omega\) 
such that \(\omega_{\varepsilon} = \omega\) outside a neighbourhood of the origin, \(\omega_{\varepsilon}\) is near-symplectic on all of \(\mathbb R^4\), and its zero 
set \(\omega^{-1}_{\varepsilon}(0)\) has exactly two non-compact connected components.
\end{lemma}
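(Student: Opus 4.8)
The plan is to remove the single defect of $\omega$ — the crossing, at the origin, of its zero set $Z(\omega)=\{x_2=x_3=0,\ H=0\}$, which is a union of two lines $\ell_1,\ell_2$ — by a near-symplectic surgery supported on an arbitrarily small ball about $0$, using $\omega+\varepsilon\omega_3$ as the local resolving model. Two facts set this up. First, the coefficients $f_i$ of $\omega$ are homogeneous of degree $2$, so $L_{E/4}\omega=\omega$ for the Euler field $E=\sum_i x_i\partial_{x_i}$; hence every round sphere $\partial B_r$ is $\omega$-convex, and $\omega$ is near-symplectic on any spherical shell, with outer boundary convex and inner boundary concave. Second, by the preceding lemma $\omega+\varepsilon\omega_3$ is near-symplectic on all of $\mathbb{R}^4$, antipodal-invariant, with zero set the hyperbola $\{x_2=x_3=0,\ H=\varepsilon\}$; for $\varepsilon$ small its restriction to a small ball $B_{\rho_2}$ has zero set exactly the two hyperbola branches, each a properly embedded arc meeting $\partial B_{\rho_2}$ in two points. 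Because $\omega_3$ has constant coefficients, $\nabla(\omega+\varepsilon\omega_3)=\nabla\omega$ everywhere; so a primitive of $\omega+\varepsilon\omega_3$ vanishing at the four zeros on $\partial B_{\rho_2}$ yields, as in \S\ref{near-symplectic-section}, an antipodal-invariant Liouville field making $\partial B_{\rho_2}$ convex.

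Fix $\rho_2<\rho_1$. Using a dilation (legitimate by homogeneity of $\omega$ and $\omega_3$) I would place a suitably rescaled copy of $\omega+\varepsilon\omega_3$ on $B_{\rho_2}$, keep $\omega$ on $\mathbb{R}^4-B_{\rho_1}$, and fill the shell $\overline{B_{\rho_1}}-B_{\rho_2}\cong[0,1]\times S^3$ with a near-symplectic cobordism interpolating between the two. Concretely, identify $\partial B_{\rho_1}\cong\partial B_{\rho_2}\cong S^3$ radially and let $\lambda_1$, $\lambda_0$ be the near-contact forms induced on these spheres by $\omega$ and by the rescaled model. The key claim is that $\lambda_1$ and $\lambda_0$ are homotopic through near-contact forms (on $\mathbb{RP}^3$ in the equivariant version). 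For $\varepsilon$ small the four zeros of $\lambda_0$ form a small isotopy of those of $\lambda_1$; after that isotopy the two forms have the same zero set, the same index at each zero (immediate from $\nabla(\omega+\varepsilon\omega_3)=\nabla\omega$ by continuity in $\varepsilon$), the same orientation induced by $\lambda\wedge d\lambda$, and they are $C^0$-close away from the zeros, hence homotopic there as nonvanishing $1$-forms. Proposition \ref{contact_classification} then yields a near-contact homotopy $\lambda_t$, which one may take constant near the zeros.

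Feeding $\{\lambda_t\}$ into Lemma \ref{cobordism}, and putting $\omega$ near $\partial B_{\rho_1}$ and the rescaled model near $\partial B_{\rho_2}$ into their convex-collar normal forms, I obtain a near-symplectic form $\Omega$ on the shell equal to $\omega$ near $\partial B_{\rho_1}$, equal to the rescaled model near $\partial B_{\rho_2}$, and with zero set the product $[0,1]\times\{q_1,\dots,q_4\}$ — four arcs joining the four points of $\ell_1\cup\ell_2$ on $\partial B_{\rho_1}$ to the four points of the hyperbola on $\partial B_{\rho_2}$. Carrying all of this out $\mathbb{Z}_2$-equivariantly (Lemma \ref{cobordism} and Proposition \ref{contact_classification} have equivariant versions) keeps everything antipodal-invariant. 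Then I would define $\omega_\varepsilon$ to be $\omega$ on $\mathbb{R}^4-B_{\rho_1}$, $\Omega$ on the shell, and the rescaled model on $B_{\rho_2}$: a smooth, closed, antipodal-invariant, near-symplectic $2$-form agreeing with $\omega$ outside $B_{\rho_1}$, which can be taken as small as one wishes. Its zero set is a properly embedded $1$-manifold; outside $B_{\rho_1}$ it is the four rays of $\ell_1\cup\ell_2$, and inside $B_{\rho_1}$ it is two arcs, each a hyperbola branch with one product arc of $Z(\Omega)$ attached at each endpoint, hence with endpoints precisely the $q_i$ on $\partial B_{\rho_1}$. Gluing, each of these two arcs together with its two flanking rays is one component homeomorphic to $\mathbb{R}$, and there are no closed components, so $\omega_\varepsilon^{-1}(0)$ has exactly two non-compact components.

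The hard part will be the homotopy step: checking that $\lambda_0$ and $\lambda_1$ really fall under Proposition \ref{contact_classification} — above all equality of the indices at the zeros and of the homotopy class of the underlying nonvanishing $1$-form — and carrying out the convex-collar normal-form identifications and the $\mathbb{Z}_2$-equivariant bookkeeping needed to glue $\Omega$ smoothly onto $\omega$ and onto the model; the one structural input that makes the index comparison essentially free is $\nabla(\omega+\varepsilon\omega_3)=\nabla\omega$. One should also explain why a naive cut-off of the perturbation is hopeless: writing $\omega_\varepsilon=\omega+d\xi$ with $\xi$ supported near $0$, self-duality of $\omega$ forces $Z(\omega_\varepsilon)\subset\{(d\xi)^-=0\}$, which generically misses the transition region entirely — so passing through the cobordism construction is a necessity, not a convenience.
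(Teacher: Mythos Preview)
Your cobordism route would likely go through, but it is far heavier than the paper's argument, and your final paragraph misjudges the situation. The paper simply cuts off: with $\lambda=\tfrac14\iota_E\omega$, $\mu=\tfrac12\iota_E\omega_3$ (so $d\lambda=\omega$, $d\mu=\omega_3$) and a radial antipodal-invariant cut-off $\rho$ equal to $1$ on a ball $U\ni 0$ and $0$ outside a slightly larger ball, set $\omega_\varepsilon=d(\lambda+\varepsilon\rho\mu)$. On $\{\rho=1\}$ this is $\omega+\varepsilon\omega_3$, near-symplectic for $\varepsilon>0$; on $\{\rho=0\}$ it is $\omega$; and on the compact annulus $\operatorname{supp}d\rho$, which avoids the origin, $\omega_\varepsilon$ is a $C^1$-small perturbation of the already near-symplectic $\omega$, hence near-symplectic for small $\varepsilon$ by openness of that condition. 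The zero-set count is then immediate. This buys a genuine smallness parameter and a few-line proof; your route invokes the full machinery of \S\ref{near_contact_section}--\S\ref{near-symplectic-section}, and the cobordism form $d(e^{Ct}\lambda_t)$ from Lemma~\ref{cobordism} carries no built-in smallness.

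Your argument that the cut-off is hopeless --- that $Z(\omega_\varepsilon)\subset\{(d\xi)^-=0\}$ ``generically misses the transition region'' --- does not hold up. The condition $(d\xi)^-=0$ is three real equations in four variables and generically cuts out a $1$-manifold, which is exactly what $Z(\omega_\varepsilon)$ is supposed to be; nothing prevents four arcs from crossing the annulus. What actually happens is that the zeros drift slightly off $\ell_1\cup\ell_2$ inside $\operatorname{supp}d\rho$, and the transversality of $\omega$ to the zero section together with the nondegeneracy of $(\nabla_\xi\omega)\wedge(\nabla_\xi\omega)$ along $Z(\omega)$ guarantee this drift preserves near-symplecticity for small $\varepsilon$. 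So the cobordism construction is a detour here, not a necessity.
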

\begin{proof}
Let 
\[
E = \sum_{i = 1}^{4} x_i \partial_{x_i}
\]
be the Euler field. Let 
\(\omega_3\) be as in \eqref{omega_f}. Set:
\[
\lambda = \frac{1}{4}\,\iota_E \omega,\quad 
\mu = \frac{1}{2}\,\iota_E \omega_3,
\]
so that \(d\lambda = \omega\) and \(d\mu = \omega_3\).
\smallskip%

Let \(\rho \colon \mathbb{R}^4 \to \mathbb{R}\) 
be a radial cut-off function such that 
\(\rho(x) = \rho(-x)\), \(\rho = 1\) on a ball \(U\) about the origin, and \(\rho = 0\) outside a slightly 
larger ball. Set:
\[
\omega_{\varepsilon} = d(\lambda + \varepsilon \rho \mu).
\]
Since \(\lambda,\mu,\rho\) are invariant 
under \(x \mapsto -x\), so is 
\(\omega_{\varepsilon}\). On the region \(\rho=1\) (in particular, on \(U\)), 
\[
\omega_{\varepsilon} = \omega + \varepsilon\omega_3,
\]
which is near-symplectic for all \(\varepsilon > 0\). On \(\rho=0\), \(\omega_{\varepsilon} = \omega\), which is also 
near-symplectic. For all small \(\varepsilon\), \(\omega_{\varepsilon}\) remains near-symplectic on the support of \(d\rho\). Hence, \(\omega_{\varepsilon}\) is near-symplectic on all of \(\mathbb R^4\). 
\smallskip%

$\omega^{-1}(0)$ is a union of two lines through the origin, and the zero set of $\omega + \varepsilon\omega_3$ is a small smoothing of this cross into two branches of the hyperbola $H(x_1,x_4)=\varepsilon$. Hence, for all small \(\varepsilon > 0\), $\omega_{\varepsilon}^{-1}(0)\cap U$ consists exactly of \(2\) arcs, each with two ends 
on \(\partial U\). Let \(K\) be a closed 
neighbourhood of \(\mathbb R^4-U\), strictly larger than \(\mathbb R^4-U\). Then \(\omega_{\varepsilon}-\omega\) has compact support in \(K\) and is \(C^1\)-small. 
Hence, for all small \(\varepsilon > 0\), $\omega^{-1}_{\varepsilon}(0) \cap (\mathbb R^4 - U)$ consists exactly of \(4\) rays. These rays attach to the two arcs to produce two connected components. 
This completes the proof. \qed
\end{proof}
\medskip%

\emph{Proof of Proposition~\ref{cone_zero_ball}.} 
Consider \(\mathbb R^4\) with the standard symplectic form \(\omega_{\mathrm{st}}\) and the unit ball 
\(B \subset \mathbb R^4\). Let us 
construct the desired near-symplectic form by 
a sequence of modifications 
of \(\omega_{\mathrm{st}}\). The first step uses the following result of Taubes. See \cite{Taub-5}. 
\begin{theorem*}[\cite{Taub-5}]
There exists a near-symplectic form 
\(\omega\) on \(\mathbb R^4\) such that its zero set \(\omega^{-1}(0)\) is a circle near the origin 
and \(\omega\) coincides with a standard symplectic form outside a neighbourhood of the origin.
\end{theorem*}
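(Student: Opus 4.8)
The plan is to change $(\mathbb R^4,\omega_{\mathrm{st}})$ only inside a small ball $B$ around the origin, into which we graft a near-symplectic model whose zero set is a single unknotted circle $\gamma\subset\operatorname{int}B$. Since $\gamma$ is to be disjoint from $\partial B$, the seam is an honest contact sphere, so the construction splits in two: (i) build a near-symplectic form $\omega_1$ on $B$ vanishing exactly on $\gamma$, with $\partial B$ being $\omega_1$-convex and the induced contact structure on $S^3=\partial B$ equal to the standard tight one $\xi_{\mathrm{st}}$; then (ii) glue $(B,\omega_1)$ to $(\mathbb R^4-\operatorname{int}B,\omega_{\mathrm{st}})$, which is a symplectic collar of $(S^3,\xi_{\mathrm{st}})$, after a contact isotopy identifying the two contact spheres. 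Step (ii) is routine symplectic gluing, together with a conformal rescaling near the seam exactly as in \S\ref{blowup}, so the content is (i).

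For (i) I would first write down the standard vanishing-circle model on a tubular neighbourhood $\nu(\gamma)\cong S^1\times D^3$ of $\gamma$ (its normal bundle being trivial): the closed $2$-form which on each normal $D^3$-slice is the homogeneous self-dual form classified in Lemma~\ref{linear_connected}, the slices being glued along $\gamma$ using that the space of such germs inducing a fixed orientation on $\gamma$ is connected. This form is $g_0$-self-dual near $\gamma$, vanishes exactly on $\gamma$, is genuinely symplectic on $\nu(\gamma)-\gamma$, and, with an outward Liouville field along $\partial\nu(\gamma)=S^1\times S^2$, induces a genuine contact form $\lambda_\gamma$ there. Using Lemma~\ref{overtwisted} I would also, without changing the germ along $\gamma$, modify the model in a collar of $S^1\times S^2$ so that $\lambda_\gamma$ becomes overtwisted.

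It then remains to fill the complement $W=B-\operatorname{int}\nu(\gamma)$ — an unknot-complement cobordism from $S^1\times S^2$ to $S^3$, obtained from $[0,1]\times(S^1\times S^2)$ by attaching a single $2$-handle — by a symplectic form which is a concave collar of $(S^1\times S^2,\lambda_\gamma)$ at the bottom and a convex collar of $(S^3,\xi_{\mathrm{st}})$ at the top. Equivalently, and this is the mechanism I would actually implement, start instead from Lemma~\ref{cobordism} applied to the constant family $\lambda_t\equiv\lambda$ on $S^3$, where $\lambda$ is a near-contact form with two zeros of cancelling index, each carrying an overtwisted disc: this produces a near-symplectic form on $[0,1]\times S^3$ whose zero set is two vertical arcs and whose ends are concave/convex collars inducing $\lambda$; a near-symplectic ``death'' of the two zeros near each end, supported in a neighbourhood of an arc joining them, then closes the two arcs into one interior circle and leaves genuine contact boundaries. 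In either description the remaining step is the same: an interpolation carried out in the interior while the $S^3$-collar is held fixed at $\xi_{\mathrm{st}}$, which is exactly Eliashberg's relative $h$-principle (the quoted theorem of \cite{Eliash}, with ``$A$'' the fixed collar) made available by the overtwisted disc supplied by Lemma~\ref{overtwisted}; Proposition~\ref{contact_classification} and Proposition~\ref{near-ball} package precisely this kind of argument. Splicing $\nu(\gamma)$, $W$, and the two unmodified pieces of $\mathbb R^4$ together, and rescaling conformally near the seams, gives $\omega$.

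The main obstacle is the filling/cancellation step: converting the formal flexibility guaranteed by the overtwisted disc into an actual symplectic cobordism realizing the unknot complement with $\xi_{\mathrm{st}}$ at the far end, and in particular checking that the underlying plane-field data on $S^1\times S^2$ (respectively the formal homotopy class of the cancellation) match what Eliashberg's theorem requires. Everything else — the linear model near $\gamma$, the overtwisting via Lemma~\ref{overtwisted}, the conformal rescalings, and the concluding gluing along a contact sphere — is soft and closely parallels constructions already carried out in the excerpt.
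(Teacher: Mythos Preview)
The paper does not prove this statement: it is quoted from Taubes \cite{Taub-5} and used as a black box in \S\ref{assemble-section}. So there is no proof in the paper to compare against, and the question is whether your sketch stands on its own.

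It does not, and the gap is precisely the step you flag as ``the main obstacle''. Every route you describe ultimately needs the convex boundary $S^3$ of your modified ball to carry the \emph{tight} contact structure $\xi_{\mathrm{st}}$, since otherwise it cannot be glued to the standard symplectic collar $(\mathbb R^4-\operatorname{int}B,\omega_{\mathrm{st}})$. But the only flexibility tool you invoke is Eliashberg's $h$-principle, which by its nature outputs \emph{overtwisted} contact structures. In your first description, the contact structure on $\partial\nu(\gamma)\cong S^1\times S^2$ is overtwisted by design, and you then need a strong symplectic cobordism from it (concave) to $(S^3,\xi_{\mathrm{st}})$ (convex) realised on the unknot complement; nothing in the paper's toolkit supplies this, since Lemma~\ref{cobordism} and Proposition~\ref{contact_classification} build near-symplectic cobordisms only between near-contact forms \emph{with zeros} on the \emph{same} $3$-manifold, and Proposition~\ref{contact_classification} explicitly requires $k>0$. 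In your alternative description, the ``death'' at the convex end yields a genuine contact structure on $S^3$, but you have no mechanism forcing it to be tight: the overtwisted disc supplied by Lemma~\ref{overtwisted} sits in a punctured neighbourhood of a zero, and a local cancellation move gives no control over whether that disc is destroyed. Your phrase ``the $S^3$-collar is held fixed at $\xi_{\mathrm{st}}$'' does not help either: applying Eliashberg's theorem relative to a region $A$ on which the structure already agrees with $\xi_{\mathrm{st}}$ still requires an overtwisted disc in $M-A$, so the global output on $S^3$ is overtwisted and hence not isotopic to $\xi_{\mathrm{st}}$. Producing a strong symplectic cobordism from an overtwisted boundary to $(S^3,\xi_{\mathrm{st}})$ is a genuine piece of contact topology that is neither available from the paper's lemmas nor sketched in your proposal; without it the gluing in step~(ii) fails.
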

\smallskip%

Let us apply Taubes' construction 
away from the origin. Pick a point 
\(x \in \mathbb R^4 - \{0\}\) and a 
small neighbourhood \(U\ni x\). Taubes' argument allows us to modify \(\omega_{\mathrm{st}}\) inside \(U\) to create a single zero circle in \(U\). To keep antipodal invariance, perform the same modification near \(-x\). Denote the resulting form by \(\omega\).
\smallskip%

\(Z = \omega^{-1}(0)\) is a circle 
\(C \subset U\) and its antipodal image. 
Pick a point \(p \in C\). Perform a small \(C^1\)-perturbation of \(\omega\) near \(p\) so that \(\omega\) coincides with its linear part there. This immediately 
yields a ball neighbourhood \(D \ni p\) whose boundary \(\Sigma = \partial D\) is \(\omega\)-convex. To keep antipodal invariance, perform the same perturbation near \(-p\).
\smallskip%

Choose a ball neighbourhood \(D_0\) of 
the origin \(0 \in B\) with \(\omega\)-convex boundary \(\Sigma_0 = \partial D_0\). Pick a 
point \(a \in \Sigma_0\) and a point \(b \in \Sigma\) with \(\omega_{b} \neq 0\). Join \(a\) to \(b\) by a path \(\delta\). To keep antipodal invariance, also introduce the antipodal points \(-a,-b\) and path \(-\delta\).
\smallskip%

Form the \(\omega\)-convex connected sum of 
\(D, D_0, (-D)\) along paths \(\delta\) and \(-\delta\). That is, choose an antipodal-invariant, \(\omega\)-convex neighbourhood
\[
X \supset D \cup D_0 \cup (-D).
\]
\((X,\omega)\) is a near-symplectic \(4\)-ball with convex boundary embedded in \((B,\omega)\), and the embedding is antipodal-invariant. \(\omega^{-1}(0)\cap X\) consists 
of \(2\) arcs and no circular components.
\smallskip%

Since \(X\) is a ball, it admits a near-symplectic form \(\omega_{\varepsilon}\) as in Lemma~\ref{cross_perturb}. 
By construction, \(\partial X\) is \(\omega_{\varepsilon}\)-convex. Both \(\omega|_X\) and \(\omega_{\varepsilon}\) are antipodal-invariant and may be assumed cooriented. Then, as explained at the end of \S\,\ref{near-symplectic-section}, one can modify \(\omega\) only near \(X \subset B\) so that 
\[
\omega = c\,\omega_{\varepsilon} \quad \text{on } X
\]
for some constant \(c>0\), and \(\omega\) remains antipodal-invariant.
\smallskip%

Finally, as in Lemma~\ref{cross_perturb}, deform \(c \omega_{\varepsilon}\) to a homogeneous SD form (with respect to a flat metric) that vanishes at the 
fixed point of the antipodal map. The resulting form \(\omega\) on \(B\) satisfies all properties required in Proposition~\ref{cone_zero_ball}, and 
the proof follows.
\bibliographystyle{plain}
\bibliography{references}

\end{document}